\renewcommand\eqref[1]{(\ref{#1})} 
\newcommand\Ind{\operatorname{Ind}}
\newcommand{\tto}{\rightrightarrows}
\def\A{{\mathcal A}}
\def\o{\omega}
\def\th{\theta}
\def\T{\mathbb{T}}
\def\Z{\mathbb{Z}}
\def\H{\mathcal H}
\def\R{\mathbb{R}}
\def\S{\mathcal S}
\def\L{\sf L}
\def\e{{\sf e}}
\def\g{{\mathfrak g}}
\def\m{{\sf m}}
\def\r{{\rm r}}
\def\d{{\rm d}}
\def\({\left(}
\def\[{\left[}
\def\){\right)}
\def\]{\right]}
\def\si{\sigma}
\def\Si{\Sigma}
\def\G{{\sf G}}
\def\<{\langle}
\def\>{\rangle}
\def\NAME{totally intransitive}
\def\supp{\mathop{\mathrm{supp}}\nolimits}
\newtheorem{Theorem}{Theorem}[section]
\newtheorem{Remark}[Theorem]{Remark}
\newtheorem{Lemma}[Theorem]{Lemma}
\newtheorem{Assumption}[Theorem]{Assumption}
\newtheorem{Corollary}[Theorem]{Corollary}
\newtheorem{Proposition}[Theorem]{Proposition}
\newtheorem{Definition}[Theorem]{Definition}
\newtheorem{Example}[Theorem]{Example}
\newtheorem{Hypothesis}[Theorem]{Hypothesis}
\numberwithin{equation}{section}
\begin{document}


\title{Spectral Theory in a Twisted Groupoid Setting: Spectral Decompositions, Localization and 
Fredholmness}


\author[M. Mantoiu]{Marius M\u antoiu} \address{Departamento de Matem\'aticas, Facultad de Ciencias, Universidad de Chile, Santiago,
  Chile.}  \email{mantoiu@uchile.cl}

\author[V. Nistor]{Victor Nistor} \address{Universit\'{e} de Lorraine, UFR MIM, Ile du Saulcy, CS 50128, 57045 METZ, France and
  Inst. Math. Romanian Acad. PO BOX 1-764, 014700 Bucharest Romania}
\email{victor.nistor@univ-lorraine.fr}

\thanks{M. M. has been supported by the Fondecyt Project 1160359. V.N. has been partially supported by ANR-14-CE25-0012-01.\\
Manuscripts available from
\textbf{http:{\scriptsize//}iecl.univ-lorraine.fr{\scriptsize/}$\tilde{}$Victor.Nistor{\scriptsize
    /}}\\
AMS Subject classification (2010): 46L60 (primary), 58J40, 58H05, 37B05, 35S05, 47L80, 22A22, \\
\keywords: essential spectrum, groupoid, $C^*$-algebra, numerical range, pseudodifferential operator, cocycle, propagation, magnetic field.}





\begin{abstract}
We study bounded operators defined in terms of the regular representations of the $C^*$-algebra of an amenable, Hausdorff, second countable locally compact groupoid endowed with a continuous
$2$-cocycle. We concentrate on spectral quantities associated to natural quotients of this twisted algebra, such as the essential spectrum, the essential numerical range, and Fredholm properties. We
obtain decompositions for the regular representations associated to units of the groupoid belonging to a free locally closed orbit, in terms of spectral quantities attached to points (or orbits) in the
boundary of this main orbit. As examples, we discuss various classes of magnetic pseudo-differential operators on nilpotent groups. We also prove localization and non-propagation properties associated to suitable parts of the essential spectrum. These are applied to twisted groupoids having a \NAME\ groupoid restriction at the boundary.
\end{abstract}

\date{\today}
\maketitle
\tableofcontents

\section{Introduction}\label{introduction}

We are setting in this paper some of the foundations of the study of magnetic Hamiltonians on non-flat spaces and their generalizations. The structure of these
Hamiltonians is rather complicated and their study thus requires a more sophisticated machinery, in particular, it requires a non-trivial use
of operator algebras. Our approach is to model these magnetic Hamiltonians (and others) using {\em twisted groupoids.} On a technical level, this
leads us to relate spectral properties of operators canonically associated to a groupoid $\Xi$ with units $X$ and endowed with a
$2$-cocycle $\omega$ to the structure of the groupoid and of the $2$-cocycle. This is technically difficult and innovative, but
important in applications. The pair $(\Xi,\omega)$ will be called a {\it twisted groupoid}, for short.

\smallskip
With a few exceptions, including \cite{BBdN, MN, MPR2}, twisted groupoids have not yet been used before in spectral theory. On the
other hand, regular (untwisted) groupoids have recently been used in relation to spectral theory (mainly in connection with Fredholm and index properties) by Androulidakis and Skandalis \cite{AS},
Debord and Skandalis \cite{DS1, DS2, DS3}, Debord, Lescure, and Rochon \cite{DLR}, Monthubert \cite{Mo1, Mo2}, van Erp and Yuncken
\cite{vEY1, vEY2}, the second named author (with collaborators) \cite{CNQ, LN, NP, NWX}, and by others. The groupoids arising from
crossed product $C^*$-algebras have been used even for a longer time. Probably the first one to notice the relevance of crossed-product $C^*$-algebras in spectral theory is Georgescu
\cite{dMG} in relation to the $N$-body problem. He has then developped a comprehensive approach to spectral theory using crossed-product
$C^*$-algebras, see, for instance, \cite{GI, GI1, Ge} and the references therein. A comprehensive related work (mostly in the
framework of magnetic pseudodifferential operators) is due to the first named author and collaborators, see \cite{LMR, Ma, Ma2} and the references
therein. Many other researchers have worked on similar problems, and it is a daunting task to provide a comprehensive overview of the
literature on the subject, so we content ourselves here to mention just a few of the most relevant references \cite{BLLS1, CCQ, HM, LR, LS, LauterMoroianu1, LS1, Li, Ra, RRS}.  We apologize for the many missing references. To the best of our knowledge, our results are new
in the stated generality; in particular, they are not contained in \cite{MN}. Some of them are new even in the untwisted case.

\smallskip
As we have mentioned above, our main potential applications are to Hamiltonians on non-flat spaces.  Non-flat spaces are becoming more
and more important in view of their role in Quantum Field theory on curved space-times \cite{BW, GW, JS}. To deal with the complications
that arise on non-flat space-times, we appeal to the approaches in \cite{CNQ, Come, Ma2}.

\smallskip
Let us now describe the structure of the article. We decided to give here in the introduction a leisurely account of the contents of
the paper, while keeping the technical terms and formulae to a minimum. The reader could return to this presentation as needed.

\smallskip
{\bf Section \ref{sec.two}} is dedicated to reviewing some basic constructions, starting with that of a locally compact groupoid
endowed with a continuos $2$-cocycle. We also prove some technical results that will be useful subsequently. We first introduce the class
of {\it tractable groupoids}; they are defined as amenable, Hausdorff, second countable, locally compact groupoids with a fixed Haar
system. For simplicity, we will keep these assumptions throughout the paper, even though they are not always needed. Some basic
constructions \cite{Re} are then briefly recalled, including that of {\it the twisted groupoid $C^*$-algebra}, the {\it regular
  representations} and the important {\it vector representation}, available as soon as the tractable groupoid is {\it standard}, i.e. it has an open dense orbit with trivial isotropy.

\smallskip
In the second subsection we recall the construction of {\it the groupoid extension} and summarize some results of Brown and an Huef
\cite{BaH} that will be essential in the sequel. They show that twisted groupoid $C^*$-algebras may be seen as (closed, two-sided,
self-adjoint) ideals and direct summands in the $C^*$-algebras of the corresponding extensions \cite{BaH}. This allows us to reduce some
issues concerning twisted groupoid $C^*$-algebras to the untwisted case.

\smallskip
Many of our spectral results will rely on intermediate results on regular representations of twisted groupoid $C^*$-algebras. In
Subsection \ref{caloriferit}, we deduce these intermediate results from the untwisted case, using the Brown-an Huef connection of the
previous subsection and the commutativity of the diagram \eqref{coliclor}. In particular, in the case of a standard groupoid
(see above Definition \ref{lista2} for the definition of standard groupoids) we check that the vector representation is faithful. An
important role is played by {\em Exel's property} \cite{CNQ} (see also \cite{Ex, Ro}), asking the reduced $C^*$-norm of an element to be
attained for some regular representation. In Corollary \ref{feminism}, we show that Exel's property is fulfilled for tractable twisted groupoids.

\smallskip
In Subsection \ref{guacamole}, we deal with the problem of {\it exactness} of the short sequence attached to the choice of a closed
invariant set of units, a topic very well understood in the untwisted case. Such a result including $2$-cocycles appeared recently in
\cite{BBdN}, but in a form that is not general enough for our purposes. So we indicate a proof fit to our setting, based of the
precise connection \cite{BaH} between twisted groupoid algebras and the algebras of the groupoid extension.

\smallskip
{\bf Section \ref{sec.three}} contains the abstract forms of our spectral results. The main novelty of these results stems from the
{\it twisted} groupoid setting, but, in certain cases, the result seems to be new even for trivial cocycles--at least in the stated generality.

\smallskip
The results of Subsection \ref{meni} explain how to use the units of a groupoid and the associated regular representations to study the
spectra of elements in the twisted groupoid algebra. An important role here is played by {\em quasi-orbits}, which are defined as closures of
orbits of the groupoid $\Xi$ acting on its units $X$. In fact, instead of considering all the units of the groupoid, one could just use a
covering of the set of units by quasi-orbits and the regular representations associated to one generic unit in each of the chosen
quasi-orbits. The proof relies on showing that our groupoid satisfies the Exel property, a fact proved in Subsection \ref{caloriferit}. This
property was introduced formally in \cite{CNQ}, but it had been implicitely used before in spectral theory in references such as
\cite{Ex,NP,Ro}, via the notion of {\it strictly norming family of representations}.

\smallskip
The next subsection contains decompositions of the spectra and (via Atkinson's Theorem) applications to Fredholm conditions. They are
obtained by considering the operators $H_z := \Pi_z(F)$ obtained from an element $F$ of the twisted groupoid algebra $C^*(\Xi, \omega)$ by
applying to it the regular representation $\Pi_z$, where $z \in X$. It is assumed that $z$ is a {\it regular unit}, meaning that its isotropy
group is trivial and its orbit is locally closed. The reduction to the quasi-orbit $\mathcal Q_x$ generated by $x$ is then a standard
groupoid. The closure of such an orbit contains additional units $x$, called ``marginal,'' each contributing a term ${\sf sp}(H_x)$ to the
decomposition of the essential spectrum of $H_z$. There could be one more contribution coming from the addition of the unit in the
non-unital case. In fact, it is enough to include a subfamily of marginal points if they generate a collection of quasi-orbits covering
the boundary of the closure. We obtain that the unions appearing in the decompositions of the essential spectrum are already closed and
the invertibility conditions that characterize Fredholmness are automatically uniform. See \cite{LiS,NP} for similar ideas and results.

\smallskip
In Subsection \ref{caloriforit}, we obtain results similar to those of the previous two subsections, but replacing ``spectrum'' with ``{\it
 numerical range}'' and ``essential spectrum'' with ``{\it essential numerical range}.'' This last notion is a numerical range computed
in a quotient by the ideal of compact operators. It turns out that the resulting decomposition of the numerical range no longer consists of
merely a union, but rather of the convex hull of the union. To the best of our knowledge, such results have not yet been considered in
the literature, not even for simpler (in particular, untwisted) cases. Once again, it is enough to use coverings by quasi-orbits of
the marginal (i.e. non-generic) part of a big quasi-orbit generated by a regular point.

\smallskip
In Subsection \ref{gomitte}, in the \'etale case, one shows the absence of the discrete spectrum (and the equality between the
numerical range and the essential numerical range) for the operator $H_x$ attached to an unit $x$ generating a minimal quasi-orbit and
satisfying an extra condition. This topic deserves extra investigation.

\smallskip
{\bf Section \ref{sec.four}} contains some examples that have as common feature the appearence of a {\it variable magnetic field} on a
connected, simply connected nilpotent Lie group. On a such a group $\G$, there is a pseudo-differential formalism
\cite{Gl1,Gl2,Man1,Man2,MR,Me} in terms of scalar symbols defined on $\G\times\mathfrak g^\sharp$, where $\mathfrak g$ is the Lie algebra
of $\G$ and $\mathfrak g^\sharp$ is its dual. In addition, there are explicit classes of $2$-cocycles attached to magnetic fields (smooth
closed $2$-forms on $\G$)\,. Note that in \cite[Sect.\,5]{MN}, another type of twisted pdeudo-differential calculus has been treated. It
works for rather general classes of groups but, instead of the dual of the Lie algebra, one uses the irreducible representation theory of the
group. We refer to \cite{MR,MR1} for connections between the two calculi.

\smallskip
In Subsection \ref{gerfomoreni} we are briefly recalling the method from \cite{BM} to construct a {\it twisted pseudo-differential
  theory}; it can be seen as a quantization of the cotangent bundle $T^*\G$\,. Then we provide an interpretation of this calculus in a
groupoid setting that allows us to apply spectral results from the previous section. The relevant groupoid is a transformation groupoid
twisted by a magnetic $2$-cocycle, which essentially recovers the twisted crossed product $C^*$-algebras used in \cite{PR1}. The images
of elements through regular groupoid representations become {\it magnetic pseudo-differential operators} after a composition with a
partial Fourier transform and a unitary equivalence involving gauge covariant choices of vector potentials.

\smallskip
In Subsections \ref{gerfomoni} and \ref{grafoni}, we are studying a related situation in which, instead of the entire nilpotent group, one
considers suitable subsets. In both cases one gets {\it compressions} of operators corresponding to the entire group, which in principle
makes the spectral analysis harder. The setup is that of {\it systems with partial} (but not global) {\it symmetry}.

\smallskip
In Subsection \ref{gerfomoni}, the magnetic pseudo-differential operators of \ref{gerfomoreni} are compressed to complements of
relatively compact sets of the group. It is shown that the essential spectral data (essential spectra, essential numerical ranges, and
Fredholmness) are the same for the compressed and the initial operators, although they act in different Hilbert spaces and there
seems to be no simple relative compactness argument to be used. Consequently, in the compressed case, we again obtain
decomposition formulae for the essential spectrum. For the proof one uses magnetically twisted versions of groupoids corresponding to {\it
  a partial action} \cite{Ab,Ex1,Ex2}; they can also be seen as {\it non-invariant} reductions of the twisted transformation groupoid of
the previous subsection. The results seem to be interesting even for null magnetic field, which corresponds to the Abelian group $\G=\mathbb R^n$ (without any cocycle).

\smallskip
In Subsection \ref{grafoni}, we use the positive semigroup $\sf H$ of the three-dimensional Heisenberg group $\G$ to define the
compression. If no cocycle is used (i.e. $\omega = 1$), one gets the Wiener-Hopf-type operators and $C^*$-algebras studied by A. Nica in
\cite{Nic2}; see also \cite{MuR,Nic1} for a wider context. We add a $2$-cocycle, defined by a variable magnetic field. To reduce the
spectral analysis of the resulting {\it magnetic Wiener-Hopf operators} to a direct applications of the results in Section
\ref{sec.three}, we borrow from \cite{Nic2} the groupoid model of the Wiener-Hopf $C^*$-algebras; implementing the magnetic $2$-cocycle is a
simple matter. New features are now present. First, in \ref{gerfomoreni} and \ref{gerfomoni}, the unit space was at our disposal; it modeled the behavior at infinity of the magnetic field
and of the ``coefficients'' of the pseudo-differential symbols we decided to study. Since now neither $\sf H$ nor $\G\!\setminus\!\sf H$
are compact, the unit space will be a well-chosen compactification $X$ of $\sf H$\,, the geometry of the subset $\sf H$ imposing rigid
requirements on this choice; then the magnetic field has to adapt itself to $X$. As a consequence, the quasi-orbit structure of the unit
space is now explicit: there are six quasi-orbits, disposed in layers, but just two of them are enough to cover the boundary
$X\!\setminus\!\sf H$\,. Thus, besides decomposition formulae using the (adapted) regular representations of all the points of this
boundary, one also has simpler decompositions in terms of two units, generating the two quasi-orbits. A certain higher-dimensional
Heisenberg group is also computable from this point of view \cite[Sect.\,5]{Nic2}. The quasi-orbit structure being quite complicated, we decided not to include it in this paper.

\smallskip
{\bf Section \ref{sec.five}} is dedicated to {\it localization and non-propagation properties}. Let $H$ be a bounded normal operator in
$\H:=L^2(M;\mu)$\,. For any continuous real function $\kappa$ we denote by $\kappa(H)$ the normal operator in $\H$ constructed via the
functional calculus. If $\Psi:M\to\R$ is a bounded measurable function, we use the same symbol for the operator of multiplication by
$\Psi$ in $\H$\,. There is a single obvious inequality
\begin{equation}\label{apriori}
\|\Psi\kappa(H)\|_{\mathbb B(\H)}\,\le\,\sup_{x\in M}|\Psi(x)|\sup_{\lambda\in{\sf sp}(H)}\!|\kappa(\lambda)|
\end{equation}
that holds without extra assumptions. We treat situations in which the left hand side is small
\begin{equation}\label{aposteriori}
\|\Psi\kappa(H)\|_{\mathbb B(\H)}\,\le\,\epsilon
\end{equation}
for some $\epsilon>0$ given in advance, although in the right hand side of \eqref{apriori} the two factors are (say) equal to $1$\,.
Such results have been obtained in \cite{DS} (a very particular case) and in \cite{AMP,MPR2}, for operators $H$ deduced from a dynamical
system defined by the action of an Abelian locally compact group $\G$ on compactifications of $\G$\,. Here we investigate the problem in the
framework of the much more general twisted groupoid $C^*$-algebras.

\smallskip
The main abstract result is proven in subsection \ref{homeny}. The objects of our investigation are normal elements (or multipliers) $F$
of the standard twisted groupoid algebra and operators $H_0\!:=\Pi_0(F)$ acting in $L^2(M;\mu)$ via the vector representation. An important role is played by "the region at
infinity" $X_\infty:=X\!\setminus\!M$. As in previous sections, to every quasi-orbit $\mathcal Q$ contained in $X_\infty$ one associates
an element $F_\mathcal Q$ in the twisted $C^*$-algebra of the reduced groupoid $\Xi_\mathcal Q$\,, with spectrum contained in the essential
spectrum of the operator $H_0$\,. The function $\kappa$ in \eqref{aposteriori} has to be supported away from ${\sf sp}(F_\mathcal Q)$\,.  
Then the traces on $M$ of small neighborhoods $W$ of $\mathcal Q$ in the unit space $X$ define the functions $\Psi$ admitted in
\eqref{aposteriori}. In terms of the evolution group attached to $H_0$\,, informally, one may say that "at energies not belonging to
the spectrum of the asymptotic observable $F_\mathcal Q$\,, propagation towards the quasi-orbit $\mathcal Q$ is very improbable".

\smallskip
In the final subsection we outline a class of examples, not deriving from group actions, leading finally to a situation in which both the
nature and spectrum of the asymptotic observable $F_\mathcal Q$ and the neighborhood of the corresponding quasi-orbit are transparent
enough. The unit space $X$ is built as a compactification of an a priori given $M$, defined by a continuous surjection from the
complement of a compact subset of $M$ to a compact space $X_\infty$\,. It is also assumed that the restriction of the groupoid
to $X_\infty$ is a {\it \NAME} groupoid. Then the observable corresponding to the orbits $\mathcal Q\subset X_\infty$ are group
convolution operators, converted by a Fourier transform in multiplication operators in the Abelian case.

\smallskip
We intend to continue this investigation in a future publication. Besides aiming at other types of spectral properties, one
would like to include unbounded operators and new classes of examples. To have interesting examples in unbounded cases, one
essentially has to show that the resolvent family of a an interesting operator belongs to some twisted groupoid $C^*$-algebra. Besides some
particular situations, this is a difficult problem. The great achievement would be the inclusion of {\it twisted pseudo-differential
  operators on Lie groupoids}; we refer to \cite{ALN,DS,NWX,LMN} and references therein for the case $\o=1$\,. The twisted case will need
further theoretical investigation. We also intend to study some discrete systems leading to \'etale groupoids endowed with cocycles.

\medskip
\textbf{Acknowledgements:} This work initiated during a pleasant and fruitful visit of M. M. to Universit\'e de Lorraine in Metz.

\section{Twisted groupoid $C^*$-algebras}\label{sec.two}

\subsection{Groupoids endowed with cocycles and their $C^*$-algebras}\label{goamol}

See \cite{CNQ, Re, MN} for background and basic definitions concerning groupoids. In particular, we shall write $\Xi \tto X$ for a groupoid
$\Xi$ with units $X$. Recall from \cite{MN} that an {\em admissible groupoid} is a Hausdorff, locally compact groupoid $\Xi$ with unit
space $\Xi^{(0)}\!\equiv X$, with family $\Xi^{(2)}\!\subset\Xi\times\Xi$ of composable pairs, and with a fixed right Haar system $\lambda := \{\lambda_x\!\mid\!x\in X\}$. The
associated left Haar system $\{\lambda^x\!\mid\!x\in X\}$ is the one obtained from $\lambda$ by composing with the inversion
$\xi\to\iota(\xi)\equiv \xi^{-1}$\,. We will use this notation throughout the paper. We are also going to assume usually that $\Xi$
is second countable and amenable. For general concepts pertaining to groupoids, in particular, for groupoid amenability, we refer to
\cite{ADR,Re}. We agree to identify the units $X$ to the correspoinding subset of $\Xi$: that is, $X \subset \Xi$.

\smallskip
We shall write $x\approx y$ if $x=\r(\xi)$ and $y={\rm d}(\xi)$ for some $\xi\in\Xi$\,.  Then $\approx$ is an equivalence relation on
$X$. The equivalence classes of this relation are called {\it orbits (of $\,\Xi$ on $X$)}, and a subset which is a union of orbits is called $\Xi$-{\it invariant} or {\it saturated}.

\begin{Definition}\label{lista1}
Let $(\Xi, \lambda, \omega)$ be a locally compact twisted groupoid with Haar system $\lambda$.
\begin{enumerate}
\item[(i)] The groupoid $\Xi$ is called {\rm transitive} if its unit space $X$ has only one orbit, i.e. $x\approx y\,$ for any $x,y\in X$.
\item[(ii)] The groupoid $\Xi$ is called {\rm topologically transitive} if $X$ has a dense open orbit $M$.
\end{enumerate}
\end{Definition}

It is easy to see that if $\Xi \tto X$ is a topologically transitive groupoid, then it has a unique dense orbit, called the {\em main
  orbit.} Moreover, the main orbit is the unique open $\Xi$-invariant subset of $X$.

\smallskip
It will be convenient to use the following concept.

\begin{Definition}\label{terminology}
A {\em tractable groupoid} is an amenable, second countable, Hausdorff, locally compact groupoid $\Xi$ with a fixed right Haar system $\lambda=\{\lambda_x\!\mid\!x\in X\}$\,.
\end{Definition}

Denoting by ${\rm d,r}:\Xi\to \Xi^{(0)}$ {\it the source and range maps}, one defines {\it the ${\rm r}$-fibre} $\Xi^x$, {\it the $\d$-fibre} $\Xi_x$, and {\it the isotropy group}
$\Xi_x^x:=\Xi_x\cap\Xi^x$ of a unit $x\in X$. More generally, for $A,B\subset X$ one sets
\begin{equation*}
\Xi_A := {\rm d}^{-1}(A)\,,\quad\Xi^B\! := {\rm r}^{-1}(B)\,, \quad \Xi_A^B := \Xi_A\cap\Xi^B.
\end{equation*}

\begin{Definition}\label{asteluza}
{\rm A $2$-cocycle} \cite{Re} is a continuous function $\,\o:\Xi^{(2)}\to\T:=\{z\in\mathbb C\mid |z|=1\}$ satisfying
\begin{equation}\label{doicocic}
\o(\xi,\eta)\o(\xi\eta,\zeta) = \o(\eta,\zeta)\o(\xi,\eta\zeta)\,,\quad\ \forall\,(\xi,\eta) \in \Xi^{(2)},\ (\eta,\zeta)\in\Xi^{(2)},
\end{equation}
\begin{equation}\label{normaliz}
\o(x,\eta) = 1 =\o(\xi,x)\,,\quad\ \forall\,\xi,\eta\in\Xi\,,\,x\in X,\;\r(\eta)= x =\d(\xi)\,.
\end{equation}
\end{Definition}

Following \cite{Re}, if $(\Xi,\lambda,\o)$ is given, a $^*$-algebra structure is defined on the vector space $C_c(\Xi)$ of all continuous,
compactly supported functions $f:\Xi\to\mathbb C$ with product
\begin{align}
(f\star_\o\!g)(\xi) & :=\int_{\Xi}f(\eta)g(\eta^{-1}\xi)\,\o(\eta,\eta^{-1}\xi)\,d\lambda^{\r(\xi)}(\eta)\\
& = \int_{\Xi}f(\xi\eta)g(\eta^{-1})\,\o(\xi\eta,\eta^{-1})\,d\lambda^{\d(\xi)}(\eta)
\end{align}
and the involution
\begin{equation*}
f^{\star_\o}(\xi) := \overline{\o(\xi,\xi^{-1})}\,\overline{f(\xi^{-1})}\,.
\end{equation*}
Then, by the usual completion procedure based on {\em all} bounded $*$-representations of $\Xi$ \cite{Re}, one gets {\it the (full) twisted groupoid $C^*$-algebra} ${\sf C}^*(\Xi,\o)$\,.

\smallskip 
Given a measure $\nu$ on $X$, one defines the measure $\Lambda_\nu:=\int_X\!\lambda_x d\nu(x)$ on $\Xi$ by
\begin{equation*}
\int_\Xi f(\xi) d\Lambda_\nu(\xi) := \int_X\!\Big[\int_{\Xi_x}\!f(\xi)d\lambda_x(\xi)\Big]d\nu(x)\,,\quad\forall\,f\in C_{\rm c}(\Xi)\,.
\end{equation*}
We say that the measure $\nu$ on $X$ is {\em quasi-invariant} if the measures $\Lambda_\nu$ and $\Lambda^{-1}_\nu\!:=\Lambda_\nu\circ\iota$
are equivalent.  Given a quasi-invariant measure $\nu$ on $X$, we associate to it {\it the induced representation}
\begin{equation*}
\Ind_\nu:{\sf C}^*(\Xi,\o) \to \mathbb B\big[L^2(\Xi;\Lambda_\nu)\big]\,,\quad{\rm Ind}_\nu(f)u:=f\star_\o\!u\,.
\end{equation*}
Let $x\in X$ be a unit and $\nu=\delta_x$ the Dirac measure concentrated at $x$. Then, the associated induced representation is
the {\it regular representation} $\Pi_x := \Ind_{\delta_x} :{\sf C}^*(\Xi,\o) \to \mathbb B \big[L^2(\Xi_x;\lambda_x)\big]$. It is thus defined by
\begin{equation*}
\Pi_x(f)u := f\ast_\o\!u\,,\quad\forall\,f\in C_{\rm c}(\Xi)\,,\ u\in L^2(\Xi_x;\lambda_x) =: \H_x\,.
\end{equation*}
The regular representations serve to define the reduced norm
\begin{equation}\label{redunorm}
\|\cdot\|_{\rm r}\,:C_{\rm c}(\Xi)\to\R_+\,,\quad\|f\|_{\rm r}\,:=\sup_{x\in X}\|\Pi_x(f)\|_{\mathbb B(\H_x)}\,.
\end{equation}

\begin{Remark}\label{convention}
\normalfont{Recall that we assume our groupoids to be amenable. It is then known \cite[Sect.\,4]{BaH} that the canonical surjection ${\sf C}^*(\Xi,\o) \to {\sf C}_{\rm r}^*(\Xi,\o)$ from the full
  $C^*$-algebra to the reduced $C^*$-algebra associated to $(\Xi,\lambda, \omega)$ is injective, and hence an isomorphism. {\it
We shall thus identify the two algebras in what follows and use only the notation ${\sf C}^*(\Xi,\o)$\,. The reduced norm, which
    thus coincides the full one, will be denoted by $\|\cdot\|_{{\sf C}^*(\Xi,\o)}$ or simply by $\|\cdot\|$\,.}}
\end{Remark}

\begin{Definition}\label{aciush}
We shall say that a $2$-cocycle $\omega$ is a {\it coboundary}, if it is of the form
\begin{equation}\label{coboundary}
  \o(\xi,\eta) = [\delta^1(\si)](\xi,\eta)
  :=\si(\xi)\si(\eta)\si(\xi\eta)^{-1}
\end{equation}
for some continuous map $\si:\Xi\to\T$\,. Two 2-cocycles $\o_1$ and
$\o_2$ are called {\em cohomologous} if $\o_2=\delta^1(\si)\o_1$.
\end{Definition}

\begin{Remark}\label{acihush}
\normalfont{It is easy to see (and well-known) that to cohomologous $2$-cocycles $\o_1$ and $\o_2$, $\o_2=\delta^1(\si)\o_1$, there correspond isomorphic $C^*$-algebras ${\sf C}^*(\Xi,\o_2)\cong{\sf C}^*(\Xi,\o_1)$\,. If $\o(\xi,\eta)=1$ for any $\xi,\eta\in\Xi$\,, we shall simply write ${\sf C}^*(\Xi):={\sf C}^*(\Xi,\mathbf 1)$\,.}
\end{Remark}

\begin{Remark}\label{jiek}
\normalfont{It follows easily that, for every $\xi \in \Xi$, one has the unitary equivalence $\,\Pi_{\r(\xi)}\approx\Pi_{\d(\xi)}$\,, so
  the regular representations along an orbit are all unitarily equivalent:
\begin{equation*}
   x\approx y\,\Rightarrow\,\Pi_x\approx\Pi_y\,.
\end{equation*}}
\end{Remark}

We shall need the following classes of groupoids. Recall that all our groupoids are assumed to be tractable, see Definition \ref{terminology}.

\begin{Definition}\label{lista2}
Let $(\Xi, \lambda, \omega)$ be a tractable twisted groupoid. If $\,\Xi$ is topologically transitive and the isotropy of the main orbit
is trivial (i.e. $\,\Xi^z_z=\{z\}\,$ for one, equivalently for all $z$ in the main orbit $M$)\,, we shall say that $\Xi$ is {\rm standard}.
\end{Definition}

\begin{Example}\label{cuartet}
\normalfont{The standard transitive groupoids $\Xi \tto X$ are precisely the pair groupoids $\Xi = X \times X$. By \cite[Remark 2.2]{BaH}, on such groupoids all $2$-cocycles are trivial
  (coboundaries). The (twisted) groupoid algebra of a pair groupoid is elementary, i.e. isomorphic to the $C^*$-algebra of all the compact operators on a Hilbert space.}
\end{Example}

If $A$ is invariant, $\Xi_A^A=\Xi_A=\Xi^A$ is a subgroupoid, called {\it the restriction of $\,\Xi$ to $A$}\,. If $A$ is also locally
closed (the intersection between an open and a closed set), then $A$ is locally compact and $\Xi_A$ {\it will also be a tractable
  groupoid}, on which one automatically considers the restriction of the Haar system.

\begin{Remark}\label{idem}
\normalfont{Assume that the groupoid $\Xi$ is standard with main orbit $M$. By Example \ref{cuartet}, the restriction $\o_M := \o\vert_{M}$
  of a $2$-cocycle $\o$ on $\Xi$ to $M$ must be a coboundary. In particular ${\sf C}^*_{\rm r}(\Xi_M,\o_M)={\sf C}^*(\Xi_M,\o_M)\cong{\sf C}^*(M\times M)$ is an elementary
  $C^*$-algebra. In addition, for each $z\in M$, the restriction
\begin{equation*}
\r_z\!:=\r|_{\Xi_z}\!:\Xi_z=\Xi(X)_z\to M
\end{equation*} 
is surjective (since $M$ is an orbit) and injective (since the isotropy is trivial). Thus one transports the measure $\lambda_z$ to a
(full Radon) measure $\mu$ on $M$ (independent of $z$\,, by the invariance of the Haar system) and gets a representation $\Pi_0:{\sf
  C}^*(\Xi,\o)\to\mathbb B\big[L^2(M,\mu)\big]$\,, called {\it the vector representation} \cite{LN}. Let
\begin{equation*}
  {\sf R}_z:L^2(M;\mu)\to L^2\big(\Xi_z;\lambda_z\big)\,,\quad {\sf R}_z(v):=v\circ r_z\,.
\end{equation*}
Then $\Pi_0$ turns out to be unitarily equivalent to $\Pi_z$:
\begin{equation}\label{ibidem}
\Pi_0(f)={\sf R}_z^{-1}\Pi_z(f){\sf R}_z\,,\quad\Pi_0(f)v=\big[f\star_\o\!(v\circ \r_z)\big]\circ\r_z^{-1}.
\end{equation}}
\end{Remark}

\subsection{Groupoid extensions}\label{coforit}

We shall now proceed to related twisted groupoids and their $C^*$-algebras to untwisted groupoids, following a well-known reduction idea, see \cite{BaH, MW, Re} and the references therein.

\begin{Definition}\label{grextension}
Let $\o$ be a $2$-cocycle on the groupoid $\Xi$\,. {\rm The $\o$-extension of $\,\Xi$ by $\T$} will be denoted by $\Xi^\o$ or by
$\T\times^\o\Xi$\,.  As a topological space it is $\T\times\Xi$ with the product topology. The structural maps are
\begin{equation}\label{structu}
\r^\o(s,\xi) := (1,\r(\xi))\,,\quad \d^\o(s,\xi):=(1,\d(\xi))\,,
\end{equation}
\begin{equation*}\label{ral}
  (s,\xi)(t,\eta) :=(st\,\o(\xi,\eta),\xi\eta)\,,\quad(s,\xi)^{-1}\!:=\big(s^{-1}\o(\xi,\xi^{-1})^{-1},\xi^{-1}\big)\,.
\end{equation*}
\end{Definition}

We refer to \cite{MW} for connections with the general groupoid extension theory.

\begin{Remark}\label{adaugita}
\normalfont{For $\{1\}\times B\equiv B\subset X=\Xi^{(0)}\equiv\big(\Xi^\o\big)^{(0)}$ one has
\begin{equation}\label{umflat}
(\Xi^\o)_B = \T \times \Xi_B\,, \quad (\Xi^\o)^B = \T \times \Xi^B,\quad(\Xi^\o)_B^B = \T \times \Xi_B^B\,.
\end{equation}}
\end{Remark}

\begin{Lemma}\label{abercorn}
Let $(\Xi,\o)$ be a tractable groupoid endowed with a continuous $2$-cocycle.
\begin{enumerate}
\item[(i)] The extension $\Xi^\o$ is also tractable.
\item[(ii)] If $\,\Xi$ is topologically transitive, then $\Xi^\o$ is also topologically transitive.
\item[(iii)] If $\,\Xi$ is standard, then $\Xi^\o$ is topologically
  transitive and the isotropy groups of any point of the main orbit may be identified with the torus $\T$\,.
\end{enumerate}
\end{Lemma}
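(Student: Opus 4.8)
The plan is to treat the three items using two recurring facts: as a topological space $\Xi^\o=\T\times\Xi$, and the fibrewise identities \eqref{umflat} of Remark \ref{adaugita} reduce every question about fibres of $\Xi^\o$ to the corresponding question about $\Xi$ together with the compact group $\T$. For (i), the topological requirements are immediate, since a product of Hausdorff (resp.\ second countable, locally compact) spaces again has that property and $\T$ enjoys all three. For the Haar system I would set $\lambda^\o_x:=m\times\lambda_x$ on the $\d^\o$-fibre $(\Xi^\o)_x=\T\times\Xi_x$, where $m$ is the normalized Haar measure of $\T$. Continuity of $x\mapsto\int f\,d\lambda^\o_x$ for $f\in C_{\rm c}(\Xi^\o)$ follows from Fubini together with the continuity of the original system $\{\lambda_x\}$, while right-invariance is inherited from that of $\{\lambda_x\}$: the multiplication $(s,\xi)(t,\eta)=(st\,\o(\xi,\eta),\xi\eta)$ acts on the $\T$-coordinate by translation by the (constant-in-$s$) factor $t\,\o(\xi,\eta)$, under which $m$ is invariant, and on the $\Xi$-coordinate by the original right translation. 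The only genuinely non-formal point of (i) is amenability, which I defer.

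For (ii), the key observation is that the orbits of $\Xi^\o$ on its unit space $(\Xi^\o)^{(0)}=\{1\}\times X\cong X$ coincide with the orbits of $\Xi$: since $\r^\o(s,\xi)=(1,\r(\xi))$ and $\d^\o(s,\xi)=(1,\d(\xi))$, a unit $(1,x)$ is joined to $(1,y)$ by an arrow of $\Xi^\o$ exactly when $x\approx y$ in $\Xi$. Hence a dense open orbit $M$ of $\Xi$ is, under this identification, a dense open orbit of $\Xi^\o$, which is therefore topologically transitive. For (iii), topological transitivity is then supplied by (ii), and it remains to compute the isotropy at $(1,z)$ for $z$ in the main orbit. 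Applying \eqref{umflat} to $B=\{z\}$ gives $(\Xi^\o)^{(1,z)}_{(1,z)}=\T\times\Xi^z_z$, and since $\Xi$ is standard one has $\Xi^z_z=\{z\}$, so this isotropy group equals $\T\times\{z\}$. To identify it with $\T$ as a topological group I would invoke the normalization \eqref{normaliz}, which yields $\o(z,z)=1$; the multiplication then reads $(s,z)(t,z)=(st,z)$, so $s\mapsto(s,z)$ is a topological group isomorphism $\T\cong(\Xi^\o)^{(1,z)}_{(1,z)}$.

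The main obstacle is the amenability claim in (i), the one step that does not reduce to elementary bookkeeping. I would handle it by viewing $\Xi^\o$ as a groupoid extension: the projection $(s,\xi)\mapsto\xi$ is a surjective groupoid morphism $\Xi^\o\to\Xi$ whose kernel is the trivial group bundle $X\times\T$ with compact abelian, hence amenable, fibre $\T$. Since amenability is stable under extensions by amenable group bundles once the quotient is amenable, the amenability of $\Xi$ together with that of $\T$ forces that of $\Xi^\o$; the required permanence property of amenable groupoids is the one recorded in \cite{ADR}. Everything else in the lemma is formal, resting on the product description of $\Xi^\o$, on Remark \ref{adaugita}, and on the cocycle normalization.
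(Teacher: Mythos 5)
Your proposal is correct and follows essentially the same route as the paper: the topological and Haar-system claims are handled by the product structure $\Xi^\o=\T\times\Xi$, amenability is delegated to the extension permanence result in \cite{ADR} (the paper cites \cite[Prop.\,5.1.2]{ADR}), the orbits are identified via \eqref{structu}, and the isotropy computation uses \eqref{umflat} together with the normalization $\o(z,z)=1$. The extra details you supply (Fubini for invariance of $m\times\lambda_x$, the explicit group law $(s,z)(t,z)=(st,z)$) are correct refinements of steps the paper treats as immediate.
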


\begin{proof}
(i) Clearly $\Xi^\o$ is also locally compact, Hausdorff and second countable. On $\Xi^\o\!$ we can consider the right Haar system
  $\{\lambda_x^\o:=dt\times\lambda_x\mid x\in X\}$\,, where $dt$ is the normalized Haar measure on the torus. By
  \cite[Prop.\,5.1.2]{ADR}, if $\Xi$ is amenable the extension $\Xi^\o$ will also be amenable.

\smallskip(ii) The map $\,x\mapsto(1,x)$ identifies $X$ with the unit space of the $\o$-extension. By \eqref{structu}, the orbits are the
same as before, i.e. $x\overset{\Xi}{\approx}y$ if and only if $x\!\overset{\;\Xi^\o}{\approx}\!y$\,. In particular, the main orbits are the same.

\smallskip
(iii) From \eqref{structu} or \eqref{umflat} it follows that the isotropy groups of the extension have all the form $\T\times\Xi^x_x$
for some $x\in X$. So, if $\Xi$ is standard and $z\in M$, then $(\Xi^\o)_z^z=\T\times\{z\}$\,.
\end{proof}

Let us recall here the constructions of Brown and an Huef \cite{BaH}, which is a main technical tool in what follows. For every $n\in\Z$, let us define the $n$th homogeneous component
\begin{equation*}
  C_{\rm c}(\Xi^\o|n) := \big \{ \Phi \in C_{\rm c}(\Xi^\o) \mid\Phi(ts,\xi) = t^{-n} \Phi(s,\xi) \,,\, \forall\,s,t \in \T\,,\,\xi\in\Xi\big\}\,,
\end{equation*}
which is a $^*$-subalgebra of $C_{\rm c}(\Xi^\o)$ with the convolution product. The usual $C^*$-completion leads by definition to ${\sf C}^*(\Xi^\o|n)$\,. The map
\begin{equation*}
  \kappa^n:C_{\rm c}(\Xi^\o|n)\to C_{\rm c}(\Xi,\o^n)\,,\quad\big[\kappa^n(\Phi)\big](\xi):=\Phi(1,\xi)
\end{equation*}
with inverse
\begin{equation}\label{inviersa}
  (\kappa^n)^{-1}:C_{\rm c}(\Xi,\o^n)\to C_{\rm c}(\Xi^\o|n)\,,\quad\big[(\kappa^n)^{-1}(f)\big](t,\xi):=t^{-n}f(\xi)
\end{equation}
is a $^*$-algebra isomorphism that extends to the associated $C^*$-algebras ${\sf C}^*(\Xi^\o|n)\cong{\sf C}^*(\Xi,\o^n)$\,.

\smallskip
In \cite{BaH} it is also proved that the map
\begin{equation*}
  \chi^n:C_{\rm c}(\Xi^\o)\to C_{\rm c}(\Xi^\o|n)\,,\quad[\chi^n(\Phi)](s,\xi):=\int_\T \Phi(ts,\xi)t^n dt
\end{equation*}
extends to an epimorphism $\,\chi^n\!:{\sf C}^*(\Xi^\o)\to{\sf C}^*(\Xi^\o|n)$ that is the identity on $C_{\rm c}(\Xi^\o|n)$\,.
Composing the isomorphism $\kappa^1$ to the left with the epimorphism $\chi^1$, one already sees that ${\sf C}^*(\Xi,\o)$ is a quotient of ${\sf C}^*\big(\Xi^\o\big)$\,.

\smallskip
Although the two $C^*$-norms in the next equation are computed via different sets of representations, the result of \cite{BaH}[Lemma 3.3] gives that
\begin{equation*}
  \|\Phi\|_{{\sf C}^*(\Xi^\o|n)}\, = \,\|\Phi\|_{{\sf C}^*(\Xi^\o)}\,,\quad\forall\,\Phi\in C_{\rm c}(\Xi^\o|n)\,.
\end{equation*}
The closure ${\sf J}^n$ of $C_{\rm c}(\Xi^\o|n)$ in ${\sf C}^*(\Xi^\o)$ is a (closed bi-sided self-adjoint) ideal in ${\sf C}^*(\Xi^\o)$ and $\chi^n$ can be seen as an isomorphism between
${\sf J}^n$ and ${\sf C}^*(\Xi^\o|n)$\,. It is also shown that these ideals provide a direct sum decomposition
\begin{equation*}\label{mimi}
  {\sf C}^*(\Xi^\o)=\bigoplus_{n\in\Z}{\sf J}^n\cong\bigoplus_{n\in\Z}{\sf C}^*(\Xi^\o|n)\cong\bigoplus_{n\in\Z}{\sf C}^*(\Xi,\o^n)\,.
\end{equation*}
One of the conclusions is that {\it the twisted groupoid $C^*$-algebra embeds as a direct summand in the $C^*$-algebra of the extended
  groupoid: ${\sf C}^*(\Xi;\o)\hookrightarrow{\sf C}^*(\Xi^\o)$} (as the homogeneous component of order $1$).


\begin{Remark}\label{rem.psdo}
\normalfont{Assume that $\Xi \tto X$ is a Lie groupoid \cite{Mac} or, more generally, a longitudinally smooth groupoid \cite{LMN}. Then
  $\Xi^\o$ is also a Lie groupoid (respectively, a longitudinally smooth groupoid). This allows us to consider the pseudo-differential calculus (algebra) $\Psi^{\infty}(\Xi^\o)$ \cite{Mo1, Mo2, NWX,
    LMN}. Using the formalism above, one should be able to introduce and study a twisted pseudo-differential algebra
  $\Psi^{\infty}(\Xi;\o)$\,, such that $\Psi^{0}(\Xi;\o)\subset{\sf C}^*(\Xi,\o)$\,. The action of $\,\T$ extends to an action on
  $\Psi^{\infty}(\Xi^\o)$ and we define $\Psi^{\infty}(\Xi; \o) \simeq p_1 \Psi^{\infty}(\Xi^\o) = \Psi^{\infty}(\Xi^\o) p_1$.}
\end{Remark}

\subsection{Properties of representations}\label{caloriferit}

We have to deal with non-unital algebras. If $\mathscr C$ is a $C^*$-algebra, generic {\it unitizations} of $\mathscr C$ are denoted
by $\mathscr C^{\bf U}$; they are unital $C^*$algebras containing $\mathscr C$ as an essential ideal.  Any non-degenerate representation
$\Pi:\mathscr C\to\mathbb B(\H)$ extends uniquely to a representation $\Pi^{\bf U}\!:\mathscr C^{\bf U}\to\mathbb B(\H)$\,; if $\Pi$ is
injective, $\Pi^{\bf U}$ is also injective. In the non-unital case there is a smallest {\it minimal} unitization $\mathscr C^{\bf m}
\equiv \mathscr C \oplus \mathbb C$ (so the quotient $\mathscr C^{\bf m}/\mathscr C$ has dimension one), and a largest one, namely {\it
  the multiplier algebra} $\mathscr C^{\bf M}$.  If $\mathscr C$ is unital, one has $\mathscr C=\mathscr C^{\bf m}=\mathscr C^{\bf M}$.

\smallskip
Let us fix some $x\in X$, the set of units of $\Xi$, and consider the following diagram of $C^*$-morphisms
\begin{equation}\label{coliclor}
\begin{diagram}
  \node{{\sf C}^*(\Xi,\o)}\arrow{s,r}{\Pi_x}\arrow{e,t}{\delta}\node{C^*(\Xi^\o)}\arrow{s,r}{\Pi^\o_x}\\
  \node{\mathbb  B\big[L^2(\Xi_x;\lambda_x)\big]}\arrow{e,t}{\Delta}\node{\mathbb B\big[L^2(\Xi^\o_x;\lambda^\o_x)\big]}
\end{diagram}
\end{equation}
that we now describe. The regular representation $\Pi_x$ has been introduced in subsection \ref{goamol}. Since $x\in
X\equiv(\Xi^\o)^{(0)}$, there is also the regular representation $\Pi^\o_x$ of the (non-twisted) groupoid $C^*$-algebra
$C^*(\Xi^\o)$\,. The morphism $\delta$ is given by the Brown-an Huef theory, as explained above: on $C_{\rm c}(\Xi,\o)$ it is defined as
the composition of $(\kappa^1)^{-1}$, cf. \eqref{inviersa}, with the canonical injection $C_{\rm c}(\Xi^\o\!\mid\!1)\hookrightarrow C_{\rm c}(\Xi,\o)$\,. This means
\begin{equation*}
  [\delta(f)](t,\xi)=t^{-1}f(\xi)\,,\quad\forall\, (t,\xi)\in\mathbb T\times\Xi\,,\ f\in C_{\rm c}(\Xi,\o)\,.
\end{equation*}
Finally note that, by Remark \ref{adaugita}, 
\begin{equation*}
  L^2\big(\Xi^\o_x;\lambda^\o_x\big) = L^2\big(\mathbb T\times\Xi_x;dt\times\lambda_x\big)\cong L^2(\mathbb T;dt)\otimes L^2(\Xi_x;\lambda_x)\,.
\end{equation*}
Let us set
\begin{equation*}
  S : L^2(\mathbb T;dt)\to L^2(\mathbb T;dt)\,,\quad[S(\phi)](t) :=\int_\T r\phi(r)dr\,t^{-1}.
\end{equation*}
Clearly $S$ is the orthogonal projection on $\mathbb C\psi_{-1}$\,, where $\psi_{-1}(t):=t^{-1}$ has norm one. Then we define the
$C^*$-algebraic morphism $\Delta$ by
\begin{equation*}
  \Delta(T):=S\otimes T,\quad\forall\,T\in\mathbb B\big[L^2(\Xi_x;\lambda_x)\big]\,.
\end{equation*}

\begin{Proposition}\label{sacomute}
The diagram \eqref{coliclor} commutes.
\end{Proposition}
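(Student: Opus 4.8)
The plan is to verify the identity $\Delta\circ\Pi_x=\Pi^\o_x\circ\delta$ on the dense $^*$-subalgebra $C_{\rm c}(\Xi,\o)\subset{\sf C}^*(\Xi,\o)$. Since $\Pi_x,\Pi^\o_x$ are representations and $\delta,\Delta$ are $C^*$-morphisms (note $\Delta$ is multiplicative and $^*$-preserving because $S=S^*=S^2$), all four arrows are contractive, so commutativity on a dense set suffices. Using the unitary identification $L^2(\Xi^\o_x;\lambda^\o_x)\cong L^2(\T;dt)\otimes L^2(\Xi_x;\lambda_x)$ from Remark \ref{adaugita}, it is then enough to compare the two operators on simple tensors $\phi\otimes v$, viewed as the function $(s,\xi)\mapsto\phi(s)v(\xi)$ on $\T\times\Xi_x$.

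First I would record the left-hand side. Writing $S\phi=c_\phi\,\psi_{-1}$ with $c_\phi:=\int_\T r\phi(r)\,dr$ and $\psi_{-1}(t)=t^{-1}$, one gets $[\Delta(\Pi_x(f))](\phi\otimes v)=c_\phi\,\psi_{-1}\otimes(f\star_\o v)$, i.e. the function $(s,\xi)\mapsto c_\phi\,s^{-1}(f\star_\o v)(\xi)$. Next I would expand the right-hand side directly from the untwisted convolution on $\Xi^\o$. Writing $\theta=(r,\eta)$ and using $r^\o(s,\xi)=(1,\r(\xi))$ together with $(\lambda^\o)^{(1,\r(\xi))}=dr\times\lambda^{\r(\xi)}$ (which follows from the right Haar system $\lambda^\o_x=dt\times\lambda_x$ of Lemma \ref{abercorn} and the invariance of $dt$ under the $\eta$-dependent rotation in the inversion of Definition \ref{grextension}), and inserting $[\delta(f)](r,\eta)=r^{-1}f(\eta)$, one lands on a double integral whose inner $\T$-integral is $\int_\T r^{-1}\phi(\tau)\,dr$, where $\tau$ is the $\T$-coordinate of $(r,\eta)^{-1}(s,\xi)$.

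From the product and inversion formulas of Definition \ref{grextension} one computes $\tau=r^{-1}s\,\o(\eta,\eta^{-1})^{-1}\o(\eta^{-1},\xi)$, and by invariance of $dt$ under inversion and translation this inner integral collapses to $c_\phi\,s^{-1}\o(\eta,\eta^{-1})\o(\eta^{-1},\xi)^{-1}$. Hence the right-hand side is the function $(s,\xi)\mapsto c_\phi\,s^{-1}\int_\Xi f(\eta)v(\eta^{-1}\xi)\,\o(\eta,\eta^{-1})\o(\eta^{-1},\xi)^{-1}\,d\lambda^{\r(\xi)}(\eta)$. Comparing with the left-hand side, the whole proof reduces to the pointwise cocycle identity $\o(\eta,\eta^{-1})\o(\eta^{-1},\xi)^{-1}=\o(\eta,\eta^{-1}\xi)$, valid on the domain of integration where $\r(\eta)=\r(\xi)$.

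This last identity is where the genuine content lies, and I would obtain it by applying \eqref{doicocic} to the composable triple $(\eta,\eta^{-1},\xi)$, which gives $\o(\eta,\eta^{-1})\o(\eta\eta^{-1},\xi)=\o(\eta^{-1},\xi)\o(\eta,\eta^{-1}\xi)$, and then using the normalization \eqref{normaliz} to kill the factor $\o(\eta\eta^{-1},\xi)=\o(\r(\eta),\xi)=1$. Substituting back, the weight in the right-hand integral becomes exactly $\o(\eta,\eta^{-1}\xi)$, so that integral equals $(f\star_\o v)(\xi)$ and the two sides coincide. The only delicate points are tracking the $\T$-variable through the twisted product and inversion so that precisely the rank-one projection $S$ onto $\mathbb C\psi_{-1}$ emerges, and the final cocycle bookkeeping; the rest is routine measure-theoretic manipulation.
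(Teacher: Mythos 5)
Your proposal is correct and follows essentially the same route as the paper's proof: both check the identity on $C_{\rm c}(\Xi,\o)$ and simple tensors, expand the untwisted convolution on $\Xi^\o$ using the inversion and product formulas of Definition \ref{grextension}, use translation/inversion invariance of the Haar measure on $\T$ to make the rank-one projection $S$ appear, and reduce everything to the cocycle identity $\o(\eta,\eta^{-1})\o(\eta^{-1},\xi)^{-1}=\o(\eta,\eta^{-1}\xi)$ obtained from \eqref{doicocic} and \eqref{normaliz}. The only (immaterial) difference is that you collapse the $\T$-integral before invoking the cocycle identity, whereas the paper applies the cocycle identity first and then changes variables.
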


\begin{proof} 
To check the commutativity of the diagram, it is enough to compute for $f\in C_{\rm c}(\Xi,\o)$\,, $\phi\in L^2(\T;dt)$\,, $u\in
L^2(\Xi_x;\lambda_x)$ and $(t,\xi)\in\mathbb T\times\Xi$ (by $\ast$ we denote the groupoid convolution associated to $\Xi^\o$)\,:
\begin{equation*}
\begin{aligned}
  \big[ \big( (\Pi_x^\o\circ\delta) f \big)(\phi \otimes u)\big](t,\xi) &= \big[\delta(f)\ast(\phi \otimes u)\big](t,\xi)\\
  & = \int_\T\int_\Xi\,[\delta(f)](s,\eta)(\phi \otimes u) \big((s^{-1}\o (\eta,\eta^{-1})^{-1} \!, \eta^{-1})(t, \xi) \big) dsd\lambda_x(\eta)\\
  & = \int_\T\int_\Xi s^{-1} f(\eta) \phi \big( s^{-1}t \, \o(\eta,\eta^{-1})^{-1} \o (\eta^{-1}, \xi) \big) u \big( \eta^{-1}\xi \big)dsd \lambda_x( \eta)\\
  & = \int_\T \int_\Xi s^{-1} f(\eta) \phi \big(s^{-1}t\, \o(\eta,\eta^{-1} \xi)^{-1} \big) u( \eta^{-1} \xi) dsd \lambda_x(\eta)\\
  & = t^{-1} \! \int_\T r \phi( r ) dr \int_\Xi f( \eta) u(\eta^{-1}\xi ) \o ( \eta, \eta^{-1} \xi ) d \lambda_x ( \eta ) \\
  & = [ S ( \phi ) ] ( t )(f\star_\o\!u)(\xi)\\
  & = \big[ \big (S \otimes \Pi_x(f) \big)( \phi \otimes u) \big] (t,\xi)\\
  & = \big[ \big( ( \Delta \circ \Pi_x ) f \big ) ( \phi \otimes u)\big] ( t, \xi )\,.
\end{aligned}
\end{equation*}
The forth equality relies on the $2$-cocycle identity \eqref{doicocic} and the normalization \eqref{normaliz}, in which the unit is our
$\eta\eta^{-1}$. The fifth equality is obtained by a change of variable.
\end{proof}

The next result is a well-known result of Koshkam and Skandalis \cite[Cor.\,2.4]{KS} the usual (untwisted) groupoid $C^*$-algebras
(see also \cite[Prop.\,2.7]{BB}). The point (i), for the twisted case, has been shown in \cite[Prop.2.5]{MN} even without the second
countability assumption. We include it here since the method of proof might be interesting.

\begin{Corollary}\label{construint}
Let $\o$ be a continuous $2$-cocycle on the standard groupoid $\Xi\tto X$ with main orbit $M$.
\begin{enumerate}
\item[(i)] For every $x\in M$ the representation $\Pi_x$ is  faithful. If $\,{\sf C}^*(\Xi,\o)$ is not unital, for any
  unitization, the extension $\Pi_x^{\bf U}:{\sf C}^*(\Xi,\o)^{\bf U}\to\mathbb B\big[L^2(\Xi_x;\lambda_x)\big]$ is also faithful.
\item[(ii)] The vector representation $\Pi_0$ and its unitizations are faithful.
\end{enumerate}
\end{Corollary}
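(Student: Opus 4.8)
The plan is to reduce faithfulness of the twisted regular representation $\Pi_x$ to faithfulness of the \emph{untwisted} regular representation $\Pi^\o_x$ of the extension $\Xi^\o$, for which it is the quoted Koshkam--Skandalis result, and then transport the conclusion back along the commutative diagram \eqref{coliclor}. The whole of (i) and (ii) should follow once $\Pi^\o_x$ is known to be faithful.

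First I would verify that the untwisted statement is available for the extension. By Lemma \ref{abercorn}(i) the extension $\Xi^\o$ is again tractable, and by Lemma \ref{abercorn}(ii) it is topologically transitive with the \emph{same} main orbit $M$ as $\Xi$. Since $\Xi^\o$ is amenable, its full and reduced $C^*$-algebras coincide, so \cite[Cor.\,2.4]{KS} applies and yields that $\Pi^\o_x:C^*(\Xi^\o)\to\mathbb B\big[L^2(\Xi^\o_x;\lambda^\o_x)\big]$ is faithful for every $x\in M$. The point to stress is that this uses only that the orbit of $x$ is dense; it does \emph{not} require trivial isotropy, even though by Lemma \ref{abercorn}(iii) the isotropy of $\Xi^\o$ over $M$ is the full torus $\T$.

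Next I would exploit the Brown--an Huef description recalled before Proposition \ref{sacomute}: the morphism $\delta$ of \eqref{coliclor} identifies ${\sf C}^*(\Xi,\o)$ isometrically with the direct summand ${\sf J}^1\cong{\sf C}^*(\Xi^\o|1)$ of $C^*(\Xi^\o)$, so $\delta$ is injective. Hence the composite $\Pi^\o_x\circ\delta$ is injective, being a composition of two injective $^*$-homomorphisms. By the commutativity established in Proposition \ref{sacomute} we have $\Delta\circ\Pi_x=\Pi^\o_x\circ\delta$, so $\Delta\circ\Pi_x$ is injective, and therefore its first factor $\Pi_x$ is injective: if $\Pi_x(a)=0$ then $(\Delta\circ\Pi_x)(a)=\Delta(0)=0$, forcing $a=0$. (One can note in passing that $\Delta(T)=S\otimes T$ is itself isometric, $S$ being a rank-one projection, but only injectivity of the composite is needed.) This settles the first part of (i). The unitization claim then follows formally from the general principle recalled at the start of Subsection \ref{caloriferit}: the regular representation $\Pi_x$ is non-degenerate, and a non-degenerate faithful representation extends to a faithful representation of any unitization.

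For (ii) I would simply invoke Remark \ref{idem}: for any $z\in M$ the operator ${\sf R}_z$ of \eqref{ibidem} is a unitary intertwining $\Pi_0$ and $\Pi_z$, so the vector representation $\Pi_0$ is unitarily equivalent to $\Pi_z$ and hence faithful by (i); its unitizations are faithful by the same non-degeneracy argument. I expect the genuine work to be entirely in the first reduction, namely confirming that the Koshkam--Skandalis faithfulness criterion really does apply to the untwisted extension despite its nontrivial $\T$-isotropy; once $\Pi^\o_x$ is faithful, the transfer through \eqref{coliclor} and the passage to unitizations and to $\Pi_0$ are routine.
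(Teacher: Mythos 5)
Your proposal is correct and follows essentially the same route as the paper: reduce to the untwisted extension $\Xi^\o$ via the commutative diagram \eqref{coliclor}, invoke the Koshkam--Skandalis faithfulness result for $\Pi^\o_x$ together with the injectivity of $\delta$ from the Brown--an Huef decomposition, and then handle unitizations by the essential-ideal argument and part (ii) by the unitary equivalence of Remark \ref{idem}. Your version is in fact slightly more explicit than the paper's (which compresses the roles of $\delta$ and $\Pi^\o_x$ into one sentence), and your remark that the Koshkam--Skandalis criterion needs only a dense orbit, not trivial isotropy, is exactly the point that makes the reduction legitimate despite the $\T$-isotropy of $\Xi^\o$ over $M$.
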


\begin{proof}
(i) From Proposition \ref{sacomute}, one has $\Pi_x^\o\circ\delta=\Delta\circ\Pi_x$\,. The morphism $\delta$ is
  injective since $\Xi^\o$ is Hausdorff \cite[Cor.\,2.4]{KS}. See also
  \cite[Prop.\,2.7]{BB}. It follows then that $\Pi_x$ is also injective. In the non-unital case, faithfulness is preserved for the
  the extension, since ${\sf C}^*(\Xi,\o)$ is an essential ideal of ${\sf C}^*(\Xi,\o)^{\bf U}$.

\smallskip
(ii) is a consequence of the unitary equivalence described in Remark \ref{idem} and of the part (i).
\end{proof}

The next notion \cite{Ex,Ro} will play an important role later.

\begin{Definition}\label{excel}
We say that the twisted groupoid $(\Xi,\o)$ {\rm has Exel's property} if for every $f \in {\sf C}^*(\Xi,\o)$, there exists $x\in X$ (depending on $f$) such that $\,\|f\|\,=\,\|\Pi_x(f)\|_{\mathbb
  B[L^2(\Xi_x;\lambda_x)]}$\,.
\end{Definition}

This means that in \eqref{redunorm} the supremum of the function $X\ni x \to \|\Pi_x(f)\|_{\mathbb B[L^2(\Xi_x;\lambda_x)]}$ is attained for each element $f \in {\sf C}^*(\Xi,\o)$. If there is no cocycle, we speak simply of {\it groupoids having Exel's property}; in particular, this can be applied to $\Xi^\o$.

\begin{Corollary}\label{decolocolo}
If the groupoid extension $\Xi^\o$ has Exel's property, then the twisted groupoid $(\Xi,\o)$ has Exel's property.
\end{Corollary}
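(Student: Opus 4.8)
The plan is to transport the norm question from the twisted algebra ${\sf C}^*(\Xi,\o)$ to the untwisted algebra ${\sf C}^*(\Xi^\o)$ of the extension along the embedding $\delta$ of diagram \eqref{coliclor}, to apply the hypothesis there, and then to carry the resulting norm equality back via the commutativity of that diagram. Everything rests on two isometry facts, both essentially recorded above.

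First I would check that $\delta:{\sf C}^*(\Xi,\o)\to{\sf C}^*(\Xi^\o)$ is isometric. By the Brown--an Huef theory recalled above, $\delta$ is a $*$-isomorphism of ${\sf C}^*(\Xi,\o)\cong{\sf C}^*(\Xi^\o|1)$ onto the ideal ${\sf J}^1$, and the displayed equality $\|\Phi\|_{{\sf C}^*(\Xi^\o|n)}=\|\Phi\|_{{\sf C}^*(\Xi^\o)}$ (with $n=1$) says precisely that this identification preserves norms; equivalently, $\delta$ is an injective $*$-homomorphism and hence isometric. Thus $\|\delta(f)\|_{{\sf C}^*(\Xi^\o)}=\|f\|$ for every $f\in{\sf C}^*(\Xi,\o)$. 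Second, I would observe that $\Delta(T)=S\otimes T$ is isometric: since $S$ is the orthogonal projection onto the one-dimensional space $\mathbb C\psi_{-1}$, it is a nonzero projection, so $\|\Delta(T)\|=\|S\otimes T\|=\|S\|\,\|T\|=\|T\|$ for all $T\in\mathbb B(\H_x)$.

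With these in hand the argument is short. Fixing $f\in{\sf C}^*(\Xi,\o)$, I would put $\Phi:=\delta(f)$. By the proof of Lemma \ref{abercorn}, the unit space of $\Xi^\o$ is identified with $X$ through $x\mapsto(1,x)$, so the regular representations $\Pi^\o_x$ of ${\sf C}^*(\Xi^\o)$ are indexed exactly by $x\in X$. Applying Exel's property for $\Xi^\o$ to $\Phi$ then yields a point $x\in X$ with $\|\Phi\|_{{\sf C}^*(\Xi^\o)}=\|\Pi^\o_x(\Phi)\|$. Using the isometry of $\delta$, the commutativity $\Pi^\o_x\circ\delta=\Delta\circ\Pi_x$ from Proposition \ref{sacomute}, and the isometry of $\Delta$, I would then compute
\begin{equation*}
\|f\|=\|\delta(f)\|_{{\sf C}^*(\Xi^\o)}=\|\Pi^\o_x(\delta(f))\|=\|\Delta(\Pi_x(f))\|=\|\Pi_x(f)\|_{\mathbb B(\H_x)},
\end{equation*}
so that the supremum in \eqref{redunorm} is attained at this very $x$, which is exactly Exel's property for $(\Xi,\o)$.

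There is no genuine obstacle in this line of reasoning: the content is already concentrated in the commuting diagram \eqref{coliclor} and in the Brown--an Huef identification. The only points demanding a little care are the two isometry claims and the identification of the unit space of $\Xi^\o$ with $X$; the latter guarantees that the unit produced by Exel's property on the extension is an admissible index for the regular representations of $(\Xi,\o)$.
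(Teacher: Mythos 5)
Your argument is correct and coincides with the paper's own proof: both transport the problem to ${\sf C}^*(\Xi^\o)$ via $\delta$, invoke Exel's property there, and return through the commutativity of diagram \eqref{coliclor}, using that $\delta$ is injective (hence isometric) and that $\|S\|=1$ makes $\Delta$ isometric. Your write-up merely spells out the two isometry facts in slightly more detail than the paper does.
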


\begin{proof}
We are going to use once again the commutative diagram \eqref{coliclor}. Let $f\in{\sf C}^*(\Xi,\o)$ and assume that the
point for which the norm of $\Phi:=\delta(f)\in{\sf C}^*(\Xi^\o)$ is attained is $x$\,; set $\H_x:=L^2(\Xi_x;\lambda_x)$\,. Then, since $\delta$ is injective and $\|S\|_{\mathbb B[L^2(\T)]}\,=1$
\begin{equation*}
\begin{aligned}
  \|f\|_{{\sf C}^*(\Xi,\o)}\,&=\,\|\delta(f)\|_{{\sf C}^*(\Xi^\o)}\\
  &=\,\|\Pi^\o_x[\delta(f)]\|_{\mathbb B[L^2(\T)\otimes\H_x]}\\
  &=\,\|\Delta[\Pi_x(f)]\|_{\mathbb B[L^2(\T)\otimes\H_x]}\\
  &=\,\|S\otimes\Pi_x(f)\|_{\mathbb B[L^2(\T)\otimes\H_x]}\\
  &=\,\|\Pi_x(f)\|_{\mathbb B(\H_x)}\,,
\end{aligned}
\end{equation*}
showing that the norm of $f$ is also attained in $x$\,.
\end{proof}

\begin{Corollary}\label{feminism}
Let $\Xi$ be a tractable groupoid and $\o$ a $2$-cocycle. Then the twisted groupoid $(\Xi,\o)$ has Exel's property.
\end{Corollary}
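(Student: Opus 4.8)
The plan is to reduce the assertion to the untwisted case and then invoke Corollary \ref{decolocolo}. Concretely, by Corollary \ref{decolocolo} it suffices to show that the (untwisted) extension groupoid $\Xi^\o$ has Exel's property. By Lemma \ref{abercorn}(i), $\Xi^\o$ is again tractable: amenable, Hausdorff, second countable and locally compact, equipped with the Haar system $\{dt\times\lambda_x\}$. Thus the entire statement collapses to the single assertion that a tractable \emph{untwisted} groupoid has Exel's property, after which the cocycle re-enters only through the injection $\delta$ of the commuting diagram \eqref{coliclor}, which is exactly what Corollary \ref{decolocolo} exploits.

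It remains to treat the untwisted tractable case, i.e. to show that for every $\Phi\in{\sf C}^*(\Xi^\o)$ the supremum $\sup_{y\in X}\|\Pi^\o_y(\Phi)\|$ is attained. This is the property established in \cite{CNQ} (and implicit in \cite{Ex,Ro}); I would argue it as follows. Since $\Xi^\o$ is second countable, ${\sf C}^*(\Xi^\o)$ is a separable $C^*$-algebra, so its norm is attained by a pure state and hence by an irreducible representation $\pi$, that is $\|\Phi\|=\|\pi(\Phi)\|$. By Renault's disintegration theory any such $\pi$ is supported on a single quasi-orbit and is, up to unitary equivalence, induced from an irreducible representation of an isotropy group at some unit $y$; amenability forces the full and reduced norms to coincide, and the induced representation is then weakly equivalent to the regular representation $\Pi^\o_y$. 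One concludes $\|\Phi\|=\|\Pi^\o_y(\Phi)\|$, which is Exel's property for $\Xi^\o$.

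The main obstacle is precisely this last identification: passing from an abstract norm-attaining irreducible representation to an honest regular representation $\Pi^\o_y$. It is here that second countability (to get the norm attained at all, rather than merely approached) and amenability (so that the reduced norm equals the full one, and the disintegration is governed by the quasi-orbit through $y$) are both indispensable; the cocycle plays no role whatsoever at this stage. Once the untwisted case is secured, Lemma \ref{abercorn}(i) and Corollary \ref{decolocolo} yield the twisted statement with no further work.
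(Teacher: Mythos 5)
Your proposal is correct and follows essentially the same route as the paper: reduce to the untwisted extension $\Xi^\o$ via Lemma \ref{abercorn}(i) and Corollary \ref{decolocolo}, then use that a tractable untwisted groupoid has Exel's property. The paper simply cites \cite[Theorem 3.18]{NP} for that last input, whereas you sketch its proof; your skeleton is right, but be aware that the key step --- every irreducible representation of ${\sf C}^*(\Xi^\o)$ is induced from an isotropy group --- is the deep Ionescu--Williams theorem \cite{IW} (the generalized Effros--Hahn conjecture), not a consequence of Renault's disintegration theorem, so your paragraph is really a citation in disguise rather than a self-contained argument.
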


\begin{proof}
The starting point is the fact that a tractable groupoid has Exel's property. This is Theorem 3.18 from \cite{NP}, extending a result of
Exel \cite{Ex} from the \'etale case and relying on a deep result of Ionescu and Williams \cite{IW}. See also \cite{CNQ}.  By Lemma
\ref{abercorn}, the extension $\Xi^\o$ is also tractable. Thus $\Xi^\o$ has Exel's property. Finally, Corollary \ref{decolocolo}
shows that Exel's property is inherited from the extension by the twisted groupoid $(\Xi,\o)$\,.
\end{proof}

\subsection{Restrictions}\label{guacamole}

Let $M$ be an open $\Xi$-invariant subset of $X$. Then $A\!:=M^{\rm c}$ is closed and $\Xi$-invariant, which yields the restrictions
$\Xi^M_M=\Xi^M\!=\Xi_M$ and $\Xi^{A}_{A}=\Xi^{A}\!=\Xi_{A}$ and \cite[Lemma 2.10]{MRW3} a short exact sequence of groupoid $C^*$-algebras
\begin{equation}\label{ses}
0 \longrightarrow {\sf C}^*(\Xi_M) \longrightarrow {\sf C}^*(\Xi)\overset{\rho_A}{\longrightarrow} {\sf C}^*(\Xi_A) \longrightarrow 0\,.
\end{equation}
On continuous compactly supported functions the monomorphism consists of extending by the value $0$ and the epimorphism $\rho_A$ acts as a
restriction. We recall that one works on $\Xi_{M}$ and $\Xi_{A}$, respectively, with the corresponding restrictions of the fixed Haar system $\lambda$\,.

\begin{Proposition}\label{cociclizare}
Let $\o$ be a $2$-cocycle on the tractable groupoid $\,\Xi$ and let $\,\Xi^{(0)}\!=:\!X=M\sqcup A$ for some open invariant subset of units
$M$. Using restricted $2$-cocycles $\,\o_B\!:(\Xi_B)^{(2)}\to \T$\,, one has the short exact sequence
\begin{equation}\label{sas}
  0 \longrightarrow{\sf C}^*(\Xi_M,\o_M) \longrightarrow {\sf C}^*(\Xi,\o) \overset{\rho_A}{\longrightarrow} {\sf C}^*(\Xi_A,\o_A) \longrightarrow 0\,.
\end{equation}
\end{Proposition}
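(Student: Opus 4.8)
The plan is to lift the sequence to the untwisted extension $\Xi^\o$, where the corresponding sequence is already available from the untwisted theory, and then to descend by extracting the homogeneous component of order $1$ via the Brown--an Huef machinery of Subsection \ref{coforit}.

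First I would record how the extension behaves under restriction. Since $M$ and $A=M^{\rm c}$ are $\Xi$-invariant and, by Lemma \ref{abercorn}(ii) and its proof, the orbits of $\Xi^\o$ coincide with those of $\Xi$, the sets $M$ and $A$ are respectively an open and a closed invariant subset of $(\Xi^\o)^{(0)}=X$. Comparing \eqref{umflat} with the structural maps of Definition \ref{grextension}, one checks directly the identifications of twisted extensions
\begin{equation*}
  (\Xi^\o)_M=(\Xi_M)^{\o_M}\,,\qquad(\Xi^\o)_A=(\Xi_A)^{\o_A}\,,
\end{equation*}
the $2$-cocycle carried by each restricted extension being exactly the restriction $\o_M$, resp. $\o_A$. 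By Lemma \ref{abercorn}(i) the groupoid $\Xi^\o$ is again tractable, so the untwisted sequence \eqref{ses}, applied to $\Xi^\o$ with the open invariant set $M$, yields the exact sequence
\begin{equation*}
  0\longrightarrow{\sf C}^*\big((\Xi_M)^{\o_M}\big)\longrightarrow{\sf C}^*(\Xi^\o)\overset{\rho_A}{\longrightarrow}{\sf C}^*\big((\Xi_A)^{\o_A}\big)\longrightarrow0\,.
\end{equation*}

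Next I would exploit the gauge action of $\T$, under which $\gamma_r$ acts on functions by $(\gamma_r\Phi)(s,\xi):=\Phi(r^{-1}s,\xi)$; the homogeneous component ${\sf C}^*(\Xi^\o|n)$ is precisely the spectral subspace on which $\gamma_r$ acts through $r\mapsto r^n$, and $\chi^n$ is the associated spectral projection (a contractive idempotent). Because the monomorphism is extension by zero and $\rho_A$ is restriction, and neither touches the $\T$-variable, all maps in the sequence above commute with $\gamma$, i.e. the sequence is $\T$-equivariant. For such a sequence, the functor of taking the order-$1$ spectral subspace $P:=\chi^1$ is exact: injectivity and exactness in the middle are inherited by intersecting the original sequence with the range of $P$ (using $P\iota=\iota P$ and $P\rho_A=\rho_A P$), while surjectivity follows since any $b$ in the order-$1$ part of the quotient lifts to some $a$ with $Pa$ still mapping onto $b=Pb$. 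Applying $P$ to each term therefore preserves exactness.

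Finally, the isomorphisms $\kappa^1$ of Subsection \ref{coforit} identify the three order-$1$ components with ${\sf C}^*(\Xi_M,\o_M)$, ${\sf C}^*(\Xi,\o)$ and ${\sf C}^*(\Xi_A,\o_A)$; a one-line computation on $C_{\rm c}$ (writing an order-$1$ element as $(t,\xi)\mapsto t^{-1}f(\xi)$, exactly as in the definition of $\delta$) shows that under these identifications extension by zero becomes the inclusion and $\rho_A$ becomes the twisted restriction, so the resulting sequence is precisely \eqref{sas}. The main obstacle is the bookkeeping in the first and third steps: confirming that the cocycle of the restricted extension is $\o_M$ (resp. $\o_A$) and that $\kappa^1$ intertwines the structural maps on the nose; the spectral-subspace exactness of the middle step is then a routine compact-group argument.
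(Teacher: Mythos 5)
Your proposal follows essentially the same route as the paper: lift to the untwisted exact sequence for the extension $\Xi^\o$, observe that the inclusion and restriction maps preserve the Brown--an Huef homogeneous components, cut down to the order-$1$ summands ${\sf J}^1_M\subset{\sf J}^1\to{\sf J}^1_A$, and transport back via $\kappa^1$ (and $\chi^1$); this is exactly the paper's argument. One caveat: the map $\gamma_r(\Phi)(s,\xi):=\Phi(r^{-1}s,\xi)$ is \emph{not} an automorphism of the convolution algebra of $\Xi^\o$ (the central $\T$-variable enters the product quadratically, and indeed the ${\sf J}^n$ are pairwise orthogonal \emph{ideals}, not spectral subspaces of an automorphic action), so the phrase ``$\T$-equivariant sequence'' does not literally apply; but your actual exactness argument only uses that $P=\chi^1$ is a contractive idempotent $*$-morphism commuting with the two arrows (which is true and is what the paper checks on $C_{\rm c}$-functions), so the proof survives with that relabelling.
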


\begin{proof}
Recall the groupoid extension $\Xi^\o$ and the identifications $(1,x)\equiv x$ and $\{1\}\times B\equiv B$ for any $B\subset
X$. First note that if $B$ is locally closed and $\Xi$-invariant and $\o_B$ is the restriction of $\o$ to $(\Xi_B)^{(2)}=(\Xi_B\times\Xi_B)\cap\Xi^{(2)}$, then the extension
$\Xi_B^{\o_B}$ and the $C^*$-algebras ${\sf C}^*(\Xi_B,\o_B)$ and ${\sf C}^*\big(\Xi_B^{\o_B}\big)$ make sense. We recall that the
$C^*$-norms involve in each case all the $^*$-representations that are continuous with respect to the inductive limit topology on $C_{\rm c}$-functions and the weak topology on operators.

\smallskip
If $M\subset X$ is open and $\Xi$-invariant, then $\{1\}\times M\equiv M$ is open and $\Xi^\o$-invariant. Thus we have for free the short exact sequence
\begin{equation}\label{sus}
  0 \longrightarrow {\sf C}^*(\Xi^\o_M) \longrightarrow {\sf C}^*(\Xi^\o) \overset{\rho_A^\o}{\longrightarrow}{\sf C}^*(\Xi^\o_A) \longrightarrow 0\,,
\end{equation}
where on $C_{\rm c}(\Xi^\o)$ the restriction morphism $\rho_A^\o$ only acts at the level of the second variable.

\smallskip 
On the other hand, as explained above, twisted groupoid $C^*$-algebras may be seen as (closed two-sided self-adjoint)ideals and direct summands in the $C^*$-algebras of the corresponding extensions.  The rough idea of the proof is that {\it we could write the sequence of $\,C^*$-algebra isomorphisms: 
\begin{equation*}\label{sequence} 
{\sf C}^*(\Xi_A,\o_A)\cong{\sf C}^*\big(\Xi_A^{\o_A}|1\big)\cong{\sf C}^*(\Xi^\o| 1)/{\sf C}^*\big(\Xi_M^{\o_M} |1\big)\cong{\sf
      C}^*(\Xi,\o)/{\sf C}^*(\Xi_M,\o_M) \end{equation*} if one would justify the middle isomorphism (that still involves $C^*$-algebras
  that do not have the form ${\sf C}^*(\H)$ for some groupoid $\H$)}\,.
 
 \smallskip 
We now fix $n=1$ and note that the results of \cite{BaH} hold for all the pairs $(\Xi,\o)$\,, $(\Xi_M,\o_M)$ and $(\Xi_A,\o_A)$\,. For the
relevant subsets $B=A,M$, use notations as $\kappa^1_B\,,\chi^1_B,{\sf J}^1_B$\,. Both the operations of extending by the value $0$ in the
$\Xi$-variable and restricting in the $\Xi$-variable preserve homogeous components and commute (when suitably defined) with the
$^*$-morphisms $\chi^1_B$ and $\kappa^1_B$\,. It is enough to check this at the level of continuous compactly supported functions, which
is trivial. Thus from \eqref{sus} we deduce the short exact sequence \begin{equation*}\label{sos} 0\longrightarrow {\sf  J}^1_M\longrightarrow {\sf J}^1\longrightarrow {\sf
    J}_A^1\longrightarrow 0\,, \end{equation*} where the two non-trivial arrows are restrictions to homogeneous components of the
corresponding non-trivial arrows from \eqref{sus}. This one is isomorphic to the short sequence 
\begin{equation*}\label{sras}
  0\longrightarrow{\sf C}^*(\Xi_M,\o_M)\longrightarrow{\sf C}^*(\Xi,\o)\overset{\rho_A}{\longrightarrow}{\sf C}^*(\Xi_A,\o_A)\longrightarrow 0\,.  
 \end{equation*} 
 by using the vertical arrows $\kappa^1_B$\,, and then to \eqref{sas}, by composing with vertical arrows $\chi^1_B$\,. As we said, commutativity of the
square diagrams are checked easily on continuously compactly supported functions by using explicit formulas for $\chi^1_B$ and $\kappa^1_B$
(they "commute with restrictions or $0$-extensions in the variable $\xi\,$")\,, which is enough. Exactness of \eqref{sas} now follows and the proof is finished.
\end{proof}

Let us denote by $\mathbf{inv}(X)$ the family of all closed, $\Xi$-invariant subsets of $X$. For $A,B\in\mathbf{inv}(X)$ with $B\supset A$, one has the canonical inclusion
$i_{BA}:\Xi_{A}\to\Xi_{B}$ and the restriction morphism
\begin{equation*}
  \rho_{BA}:C_{\rm c}(\Xi_{B})\to C_{\rm c}(\Xi_{A})\,,\quad\rho_{BA}(f):=f|_{\Xi_A}=f\circ i_{BA}\,,
\end{equation*}
which extends to an epimorphism $\rho_{BA}:{\sf C}^*(\Xi_{B},\o_B)\to{\sf C}^*(\Xi_{A},\o_A)$\,.  Clearly
$\rho_{A}=\rho_{XA}$\,. One gets in this way an inductive system $\big\{{\sf C}^*(\Xi_{A},\o_A),\rho_{BA}\big\}_{\mathbf{inv}(\Xi)}$ of $C^*$-algebras.

\begin{Corollary}\label{impec}
In the setting above, if $\;B\supset A$ there is a short exact
sequence
\begin{equation*}
  0 \longrightarrow {\sf C}^*(\Xi_{B\setminus A},\o_{B\setminus A})\longrightarrow {\sf
    C}^*(\Xi_B,\o_B)\overset{\rho_{BA}}{\longrightarrow} {\sf C}^*(\Xi_{A},\o_A)\longrightarrow 0\,,
\end{equation*}
allowing the identification
\begin{equation}\label{scurt}
{\sf C}^*\big(\Xi_{A},\o_A\big) \cong {\sf C}^*\big(\Xi_B,\o_B\big)/{\sf C}^*\big(\Xi_{B\setminus A},\o_{B\setminus A}\big)\,.
\end{equation}
\end{Corollary}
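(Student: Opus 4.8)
The plan is to deduce this corollary directly from Proposition \ref{cociclizare}, applying that result not to the ambient groupoid $\Xi$ but to its restriction $\Xi_B$. The first step is to observe that, since $B$ is closed and hence locally closed, the discussion following Example \ref{cuartet} guarantees that $\Xi_B$ is again a tractable groupoid, equipped with the restricted Haar system and the restricted $2$-cocycle $\o_B$ (which is defined on $(\Xi_B)^{(2)}$ exactly as in the proof of Proposition \ref{cociclizare}). Thus all the objects ${\sf C}^*(\Xi_B,\o_B)$ and the associated extension make sense.

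Next I would arrange the hypotheses of Proposition \ref{cociclizare} for the pair $(\Xi_B,\o_B)$, whose unit space is $B$. Since $A\subset B$ and $A$ is closed in $X$, the set $A$ is closed in $B$ for the relative topology, so $B\setminus A$ is open in $B$; moreover $B\setminus A$ is $\Xi$-invariant, for if $x\in B\setminus A$ and $x\approx y$ then invariance of $B$ forces $y\in B$ while invariance of $A$ forbids $y\in A$. Hence $B=(B\setminus A)\sqcup A$ is precisely a decomposition of the unit space of $\Xi_B$ into an open invariant part $B\setminus A$ and a closed invariant part $A$, as required by the proposition.

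Applying Proposition \ref{cociclizare} to $(\Xi_B,\o_B)$ with open invariant subset $B\setminus A$ then yields the short exact sequence
\begin{equation*}
0 \longrightarrow {\sf C}^*\big((\Xi_B)_{B\setminus A},(\o_B)_{B\setminus A}\big) \longrightarrow {\sf C}^*(\Xi_B,\o_B) \overset{\rho_{BA}}{\longrightarrow} {\sf C}^*\big((\Xi_B)_A,(\o_B)_A\big) \longrightarrow 0\,,
\end{equation*}
in which the restriction morphism supplied by the proposition is exactly the map $\rho_{BA}$ defined just before the statement. To match the outer terms with those in the corollary I would invoke transitivity of restriction: restricting $\Xi$ first to $B$ and then to the invariant subsets $B\setminus A$ and $A$ (both contained in $B$) coincides with restricting $\Xi$ directly, so $(\Xi_B)_{B\setminus A}=\Xi_{B\setminus A}$ and $(\Xi_B)_A=\Xi_A$, together with the analogous equalities $(\o_B)_{B\setminus A}=\o_{B\setminus A}$ and $(\o_B)_A=\o_A$ for the restricted cocycles. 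Finally, the identification \eqref{scurt} is just the canonical isomorphism of ${\sf C}^*(\Xi_B,\o_B)/\ker\rho_{BA}$ with the image of $\rho_{BA}$, using surjectivity and the identification of the kernel.

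I do not expect a genuine obstacle here; the corollary is essentially a relativized restatement of Proposition \ref{cociclizare}. The only points requiring care are the invariance of $B\setminus A$ and the transitivity-of-restriction identifications, and the latter reduce, exactly as in the proof of Proposition \ref{cociclizare}, to the elementary observation that extending by $0$ and restricting in the $\Xi$-variable commute with $\kappa^1_B$ and $\chi^1_B$ at the level of $C_{\rm c}$-functions.
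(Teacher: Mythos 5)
Your proposal is correct and follows exactly the paper's own route: the paper proves this corollary in one line by applying Proposition \ref{cociclizare} to the tractable groupoid $\Xi_B$ with the open invariant subset $B\setminus A$ of its unit space $B$. Your additional verifications (invariance of $B\setminus A$, transitivity of restriction) are the details the paper leaves implicit, and they are carried out correctly.
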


\begin{proof}
The statement follows from our Proposition \ref{cociclizare}, since $\Xi_B$ is also tractable.
\end{proof}

\begin{Remark}\label{exasperare}
\normalfont{Assume that the $C^*$-algebras $\mathscr C$ and $\mathscr D$ are not unital and denote by $\mathscr C^{\bf m}$ and $\mathscr
  D^{\bf m}$, respectively, their minimal unitizations. If $\rho:\mathscr C\to\mathscr D$ is a $^*$-epimorphism, then
\begin{equation*}
  \rho^{\bf m} : \mathscr C^{\bf m} \to \mathscr D^{\bf m},\quad\rho^{\bf m}(c+\lambda 1) := \rho(c)+\lambda 1
\end{equation*}
is a unital $^*$-epimorphism and $\ker(\rho^{\bf m})=\ker(\rho)$\,. Applying this to the situation in the Corollary, besides \eqref{scurt} one gets
\begin{equation*}
  {\sf C}^*(\Xi_{A},\o_A) \cong {\sf C}^*(\Xi_B,\o_B)^{\bf m}/{\sf C}^*(\Xi_{B\setminus A},\o_{B\setminus A})^{\bf m}\,.
\end{equation*}}
\end{Remark}

\begin{Remark}\label{coerenta}
\normalfont{Let $x\in A \in \mathbf{inv}(X)$\,; one has the regular representations $\Pi_x\!:{\sf C}^*(\Xi,\o) \to \mathbb B\big[L^2(\Xi_x; \lambda_x) \big]$ and $\Pi_{A,x}\!:{\sf
    C}^*\big(\Xi_A, \o_A\big)\to \mathbb B \big[L^2(\Xi_x;\lambda_x)\big]$ (the fibers $\Xi_x$ and $\Xi_{A,x}$ are seen to
  coincide). It is easy to check, and will be used below, that $\Pi_{x}=\Pi_{A,x} \circ \rho_A$\,. }
\end{Remark}

We close this section with one more remark about restrictions. In some situations, $C_{\rm c}(\Xi)$ is too small to contain all the elements
of interest. On the other hand, the nature of the elements of the completion ${\sf C}^*(\Xi,\o)$ is not easy to grasp; for instance it
is not clear the concrete meaning of a restriction applied to them. In search of a good compromise, we recall \cite[pag.\,50]{Re} the embeddings
\begin{equation*}
  C_{\rm c}(\Xi)\subset L^{\infty,1}(\Xi)\subset {\sf C}^*(\Xi,\o)\,,
\end{equation*} 
where $L^{\infty,1}(\Xi)$ is a Banach $^*$-algebra, the completion of $C_{\rm c}(\Xi)$ in {\it the Hahn norm}
\begin{equation*}
  \|f\|_{\Xi}^{\infty,1} := \max\Big\{\sup_{x\in X}\int_{\Xi_x}\!|f(\xi)| d\lambda_x(\xi)\,,\sup_{x\in X}\int_{\Xi_x}\!|f\big(\xi^{-1}\big)|d\lambda_x(\xi)\Big\}\,.
\end{equation*}
Actually, ${\sf C}^*(\Xi,\o)$ is the enveloping $C^*$-algebra of $L^{\infty,1}(\Xi)$\,. The following result is obvious:

\begin{Lemma}\label{folou} 
If $A\subset X$ is closed and invariant, the restriction map $\rho_A$ extends to a $^*$-morphism $\rho_A:L^{\infty,1}(\Xi)\to
L^{\infty,1}\big(\Xi_A\big)$ that is contractive with respect to the Hahn norms.
\end{Lemma}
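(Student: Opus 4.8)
The plan is to check that $\rho_A$ already has all the required properties on the dense subalgebra $C_{\rm c}(\Xi)$ and then to extend it by continuity. The whole argument rests on one elementary consequence of the invariance of $A$: if $x\in A$ and $\xi\in\Xi$ satisfies $\r(\xi)=x$ (or $\d(\xi)=x$), then $\r(\xi)$ and $\d(\xi)$ lie in one orbit and hence both in $A$, so $\xi\in\Xi_A$. Consequently, for every $x\in A$ the fibres coincide, $\Xi_x=\Xi_{A,x}$ and $\Xi^x=(\Xi_A)^x$, and the restricted left/right Haar measures over such $x$ agree with the original ones.

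Granting this, I would first note that $\rho_A$ maps $C_{\rm c}(\Xi)$ into $C_{\rm c}(\Xi_A)$: since $\Xi_A$ is closed in $\Xi$, the restriction $f|_{\Xi_A}$ is continuous with support contained in $\supp(f)\cap\Xi_A$, which is compact. Next I would check that $\rho_A$ intertwines the twisted structures. For the involution this is immediate from $\o_A=\o|_{(\Xi_A)^{(2)}}$ and the fact that $\xi\in\Xi_A$ forces $\xi^{-1}\in\Xi_A$, so that $\rho_A\big(f^{\star_\o}\big)=\rho_A(f)^{\star_{\o_A}}$. For the convolution, the key point is that when $\r(\xi)=x\in A$ the integration variable $\eta$ in $(f\star_\o g)(\xi)=\int_{\Xi^x}f(\eta)g(\eta^{-1}\xi)\,\o(\eta,\eta^{-1}\xi)\,d\lambda^x(\eta)$ ranges over $\Xi^x\subset\Xi_A$ by invariance, and then $\eta^{-1}\xi\in\Xi_A$ as well; hence the integrand depends only on the restrictions $f|_{\Xi_A}$, $g|_{\Xi_A}$ and on $\o_A$, giving $\rho_A(f\star_\o g)=\rho_A(f)\star_{\o_A}\rho_A(g)$.

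Finally, for contractivity I would use the coincidence of fibres to write, for $x\in A$, the identity $\int_{\Xi_{A,x}}|\rho_A(f)(\xi)|\,d\lambda_x(\xi)=\int_{\Xi_x}|f(\xi)|\,d\lambda_x(\xi)$ together with the analogous one for the term involving $f(\xi^{-1})$; taking suprema over $x\in A$ and bounding them by the suprema over $x\in X$ yields $\|\rho_A(f)\|^{\infty,1}_{\Xi_A}\le\|f\|^{\infty,1}_\Xi$. Since $C_{\rm c}(\Xi)$ is dense in $L^{\infty,1}(\Xi)$ for the Hahn norm, this contraction extends uniquely to a $^*$-morphism $\rho_A:L^{\infty,1}(\Xi)\to L^{\infty,1}(\Xi_A)$. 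The computation is entirely routine; the only point requiring a moment's care—and the one I would flag as the heart of the argument—is the invariance bookkeeping ensuring that a convolution integral over an $\r$-fibre anchored in $A$ never leaves $\Xi_A$, which is exactly what makes restriction compatible with the twisted groupoid product.
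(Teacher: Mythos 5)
Your argument is correct; the paper in fact gives no proof at all, introducing the lemma with ``The following result is obvious,'' and your write-up is precisely the routine verification that is being left to the reader: invariance of $A$ forces every convolution integral over a fibre anchored in $A$ to stay inside $\Xi_A$, the fibres $\Xi_x=(\Xi_A)_x$ coincide for $x\in A$ so the Hahn norm can only decrease, and the contractive $^*$-morphism on $C_{\rm c}(\Xi)$ extends to the completions. Nothing is missing.
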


\begin{Remark}\label{doomsday}
\normalfont{It is still unclear how this restrictions act on a general element $f\in L^{\infty,1}(\Xi)$\,. However, if $f$ is a continuous function on $\Xi$ with finite Hahn norm (write $f\in
  L^{\infty,1}_{\rm cont}(\Xi)$)\,, the restriction acts in the usual way and one gets an element $\rho_A(f)$ of $L^{\infty,1}_{\rm cont}(\Xi_A)$\,.}
\end{Remark}

\section{Spectral results}\label{sec.three}

\subsection{Quasi-orbit decomposition of spectra}\label{meni}

In this subsection we continue to assume that {\it the groupoid $\Xi$\,, with unit space $X$, is tractable and that $\o$ is a
  (continuous) $2$-cocycle on $\,\Xi$}\,. We are going to need some terminology.
\begin{itemize}
\item
{\it A quasi-orbit} is the closure of an orbit and $\,\mathbf{qo}(X)\subset\mathbf{inv}(X)$ denotes the family of all quasi-orbits. {\it The orbit of a point} $x$ will be denoted by
$\mathcal O_x$\,. Of course, $\mathcal O_x = {\rm r}(\Xi_x)$\,. Let us set $\mathcal Q_x := \overline{\mathcal O_x}$ for {\it the quasi-orbit generated by $x$}\,.
\item
More generally, if $Y\in\mathbf{inv}(X)$, the families $\,\mathbf{qo}(Y)\subset\mathbf{inv}(Y)$ are defined similarly, in
terms of the restriction groupoid $\Xi_Y$\,; clearly $\mathbf{qo}(Y)$ can be seen as the family of all the $\Xi$-quasi-orbits that are contained in $Y$\,.
\item
Let $x\in\mathcal Q\in\mathbf{qo}(X)$\,. We say that $x$ is {\it generic} (with respect to $\mathcal Q$) and write $x\in\mathcal
Q^{\rm g}$ if $x$ generates $\mathcal Q$\,, i.e. $\mathcal Q_x=\mathcal Q$. Then the subset $\mathcal Q^{\rm g}$ is invariant and
dense in $\mathcal Q$. Otherwise, that is, if $x \in \mathcal Q$\,, but $\mathcal Q_x \neq \mathcal Q$\,, we say that $x$ is {\it
  non-generic} in $\mathcal Q$ and write $x\in\mathcal Q^{\rm n}$; this means that $\mathcal Q_x\subsetneqq\mathcal Q$\,.
\end{itemize}

Recall that a surjective $^*$-morphism $\Phi:\mathscr C\to\mathscr D$ extends uniquely to a $^*$-morphism $\Phi^{\bf M}:\mathscr C^{\bf
  M}\to \mathscr D^{\bf M}$ between the multiplier algebras; if $\mathscr C$ and $\mathscr D$ are $\si$-unital (separable, in
particular), then $\Phi^{\bf M}$ is also surjective. For each locally closed invariant subset $Y$ of $X$ one sets $\mathscr C_Y := {\sf
  C}^*(\Xi_Y,\o_Y)$ and $\mathscr C_Y^{\bf U} := {\sf C}^*(\Xi_Y,\o_Y)^{\bf U}$ for ${\bf U} \in \{{\bf m,M}\}$; the index
$Y = X$ will be omitted. It is also convenient to abbreviate $\H_y:=L^2(\Xi_y;\lambda_y)$ for every $y\in X$.

\smallskip
For an element $F$ belonging to $\mathscr C^{\bf U}$, for every $Y\in\mathbf{inv}(X)$ and for every $x\in X$, one sets
\begin{equation}\label{poartafesul}
  F_Y := \rho_Y^{\bf U}(F) \in \mathscr C_Y^{\bf U} \quad\mbox{and}\quad H_x := \Pi_x^{\bf U}(F) \in \mathbb B(\H_x)\,.
\end{equation} 
Below, the letter ${\bf U}$ indicates one of the unitizations $\mathbf{m}$ or ${\mathbf M}$\,; Remark \ref{camsuy} refers to these choices.  The next result is interesting in itself and complements
some of the results from \cite[Sect.3]{MN}. It will have a counterpart in terms of the essential spectrum; see Theorem \ref{carefuly}.
Occasionally, although this is not really necessary, we will indicate the unital $C^*$-algebra in which the spectrum is computed.

\begin{Theorem}\label{garefuly}
Let $\Xi \tto X$ be an tractable groupoid endowed with a $2$-cocycle $\o$ and let $Y\subset X$ a closed invariant set of units. Consider an
element $F\in\mathscr C^{\bf U}\!:={\sf C}^*(\Xi,\o)^{\bf U}$\,. One denotes by ${\sf S}_Y(F)$ the spectrum of the image $\beta_Y(F_Y)$ of
$F_Y\in\mathscr C^{\bf U}_Y$ in the quotient $\mathscr C_Y^{\bf U}/\mathscr C_Y$. Let $\,\{\mathcal Q_i\}_{i\in I}$ be a family of
quasi-orbits such that $\,Y=\bigcup_{i\in I}\mathcal Q_i$\,, and for each $i\in I$ let $x_i\in\mathcal Q_i^{\sf g}$ (i.e.\! $x_i$ generates the quasi-orbit $\mathcal Q_i$)\,. Then
\begin{equation}\label{hardiancess}
 {\sf sp}(F_Y) \, = \, {\sf S}_Y(F) \cup \bigcup_{x\in Y}{\sf sp}\big(H_x\big) \, = \, {\sf S}_Y(F) \cup\, \bigcup_{i\in I}{\sf
   sp} \big(H_{x_i} \big) \, = \, {\sf S}_Y(F) \cup\, \bigcup_{i\in I}{\sf sp}\big(F_{\mathcal Q_i} \vert \mathscr C^{\bf U}_{\mathcal Q_i} \big) \,.
\end{equation}
\end{Theorem}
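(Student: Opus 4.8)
The plan is to reduce all three equalities to the first one and then obtain the other two by elementary functoriality together with a weak-containment property of regular representations. Throughout I write $a:=F_Y-\lambda\in\mathscr{C}_Y^{\bf U}$ and recall from Remark \ref{coerenta} that $H_x=\Pi_{Y,x}^{\bf U}(F_Y)$, so $H_x-\lambda=\Pi_{Y,x}^{\bf U}(a)$, while $\beta_Y$ is a unital morphism with image $\beta_Y(F_Y)$. The inclusions ``$\supseteq$'' in the first equality are immediate, since a unital $^*$-morphism never enlarges the spectrum: both ${\sf S}_Y(F)={\sf sp}(\beta_Y(F_Y))$ and each ${\sf sp}(H_x)$ are contained in ${\sf sp}(F_Y)$, and as ${\sf sp}(F_Y)$ is closed this even gives $\overline{\bigcup_{x\in Y}{\sf sp}(H_x)}\subseteq{\sf sp}(F_Y)$. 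Hence the real content of the first equality is the reverse inclusion, which I rephrase as: if $a$ is invertible modulo $\mathscr{C}_Y$ (i.e.\ $\lambda\notin{\sf S}_Y(F)$) but $a$ is not invertible, then $\lambda\in{\sf sp}(H_{x_0})$ for some $x_0\in Y$.

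The core step uses Exel's property, which holds for the tractable restriction $(\Xi_Y,\o_Y)$ by Corollary \ref{feminism}. If $a$ is not invertible in the unital algebra $\mathscr{C}_Y^{\bf U}$, then one of the positive elements $G:=a^*a$ or $G:=aa^*$ is not invertible, so $0\in{\sf sp}(G)$; moreover $\beta_Y(G)$ is invertible because $\beta_Y(a)$ is, so ${\sf sp}(\beta_Y(G))\subseteq[c,\infty)$ for some $c>0$. Choosing $h(t):=\max(0,1-t/c)$ (so $0\le h\le 1$, $h(0)=1$, $h(t)<1$ for $t>0$, and $h\equiv 0$ on $[c,\infty)$) one gets $\beta_Y(h(G))=h(\beta_Y(G))=0$, i.e.\ $h(G)\in\mathscr{C}_Y$, while $\|h(G)\|=\max_{t\in{\sf sp}(G)}h(t)=h(0)=1$ since $0\in{\sf sp}(G)$. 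Exel's property then furnishes $x_0\in Y$ with $\|\Pi_{Y,x_0}(h(G))\|=\|h(G)\|=1$; setting $G_{x_0}:=\Pi_{Y,x_0}^{\bf U}(G)$ and using that functional calculus commutes with the unital morphism $\Pi_{Y,x_0}^{\bf U}$, this says $\max_{t\in{\sf sp}(G_{x_0})}h(t)=1$. Since $h$ attains the value $1$ only at $t=0$ and ${\sf sp}(G_{x_0})$ is compact, we conclude $0\in{\sf sp}(G_{x_0})$, so $H_{x_0}-\lambda$ fails to be bounded below (respectively its adjoint does, in the $aa^*$ case), hence is not invertible, giving $\lambda\in{\sf sp}(H_{x_0})$. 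This proves the first equality. I expect this peaking step to be the main obstacle: it is exactly the passage from the \emph{attained} reduced norm to the \emph{exact} membership $0\in{\sf sp}(G_{x_0})$ that forces $\bigcup_{x\in Y}{\sf sp}(H_x)$ to be closed once ${\sf S}_Y(F)$ is adjoined; everything else is bookkeeping.

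For the second equality I rely on the standard fact (a secondary, external input) that $\Pi_x$ is weakly contained in $\Pi_{x_i}$ whenever $x\in\mathcal Q_i=\overline{\mathcal O_{x_i}}$: for $x$ in the orbit this is the unitary equivalence of Remark \ref{jiek}, and for $x$ in the boundary it follows by passing to weak-$^*$ limits of the translates $\Pi_{\xi\cdot x_i}=\Pi_{x_i}$ along a net $\xi\cdot x_i\to x$. Thus $\ker\Pi_{x_i}\subseteq\ker\Pi_x$ on $\mathscr{C}_{\mathcal Q_i}$, so $H_x-\lambda$ is the image of $H_{x_i}-\lambda$ under the induced unital $^*$-morphism, whence ${\sf sp}(H_x)\subseteq{\sf sp}(H_{x_i})$. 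As $Y=\bigcup_i\mathcal Q_i$ and each $x_i\in Y$, this yields $\bigcup_{x\in Y}{\sf sp}(H_x)=\bigcup_{i\in I}{\sf sp}(H_{x_i})$, the second equality. Finally, for the third equality I apply the already established first (and second) equality to the closed invariant set $\mathcal Q_i$ with its single generator $x_i$, obtaining ${\sf sp}(F_{\mathcal Q_i}\mid\mathscr{C}_{\mathcal Q_i}^{\bf U})={\sf S}_{\mathcal Q_i}(F)\cup{\sf sp}(H_{x_i})$. The restriction morphism $\mathscr{C}_Y^{\bf U}\to\mathscr{C}_{\mathcal Q_i}^{\bf U}$ maps the ideal $\mathscr{C}_Y$ onto $\mathscr{C}_{\mathcal Q_i}$ (Corollary \ref{impec}), hence descends to a unital morphism of quotients carrying $\beta_Y(F_Y)$ to $\beta_{\mathcal Q_i}(F_{\mathcal Q_i})$, so ${\sf S}_{\mathcal Q_i}(F)\subseteq{\sf S}_Y(F)$. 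Combining these absorptions, ${\sf S}_Y(F)\cup\bigcup_{i\in I}{\sf sp}(F_{\mathcal Q_i})={\sf S}_Y(F)\cup\bigcup_{i\in I}{\sf sp}(H_{x_i})$, which completes the chain.
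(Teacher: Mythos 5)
Your proof is correct, and its engine is the same as the paper's: Exel's property for the restricted twisted groupoid (Corollary \ref{feminism}), exploited through a continuous function that peaks at a single spectral value and is annihilated by the quotient map $\beta_Y$. The difference is one of packaging. The paper first shows that the family $\big\{\beta_Y\big\}\cup\big\{\Pi^{\bf U}_{Y,x}\big\}_{x\in Y}$ is \emph{strictly norming} -- using a function $\varphi$ peaking at $\|G_Y\|$, the top of the spectrum -- and then invokes \cite[Theorems 3.4 and 3.6]{NP} to convert ``strictly norming'' into the spectral decomposition; you instead peak at $0\in{\sf sp}(a^*a)$ and extract the membership $0\in{\sf sp}(G_{x_0})$ directly, which amounts to inlining the proof of the cited results from \cite{NP} and makes the argument self-contained. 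This buys independence from the external reference at the cost of redoing, for this one statement, what \cite{NP} proves once for all strictly norming families. For the remaining equalities the paper uses the faithfulness of $\Pi^{\bf U}_{\mathcal Q_i,x_i}$ on $\mathscr C^{\bf U}_{\mathcal Q_i}$ (via Corollary \ref{construint}, the orbit of $x_i$ being dense) to get ${\sf sp}\big(H_{x_i}\big)={\sf sp}\big(F_{\mathcal Q_i}\vert\mathscr C^{\bf U}_{\mathcal Q_i}\big)$ term by term, and cites \cite[Theorem 3.3]{MN} for ${\sf sp}(H_x)\subset{\sf sp}(H_{x_i})$ when $\mathcal Q_x\subset\mathcal Q_i$. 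Your weak-containment claim $\ker\Pi_{x_i}\subseteq\ker\Pi_x$ on $\mathscr C_{\mathcal Q_i}$ is exactly that fact, but your justification ``by passing to weak-$^*$ limits of translates'' is the one soft spot of the write-up; it is cleaner to note that $\Pi^{\bf U}_{\mathcal Q_i,x_i}$ is injective, hence spectrum-preserving, while $H_x=\Pi^{\bf U}_{\mathcal Q_i,x}\big(F_{\mathcal Q_i}\big)$ is the image of $F_{\mathcal Q_i}$ under a unital $^*$-morphism, so ${\sf sp}(H_x)\subseteq{\sf sp}\big(F_{\mathcal Q_i}\vert\mathscr C^{\bf U}_{\mathcal Q_i}\big)={\sf sp}\big(H_{x_i}\big)$. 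Likewise your detour for the third equality (applying the first equality to $Y=\mathcal Q_i$ and absorbing ${\sf S}_{\mathcal Q_i}(F)$ into ${\sf S}_Y(F)$) is valid but unnecessary, since faithfulness already gives the third equality term by term.
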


\begin{proof}
The equation \eqref{hardiancess} will be abbreviated to ${\sf sp}(F_Y) = \Si_1 = \Si_2 = \Si_3$\,. Let us prove first that ${\sf
  sp}(F_Y)=\Si_1$\,. Using notations as in Remark \ref{coerenta}, let us consider the family of $^*$-morphisms
\begin{equation*}
  \mathfrak F := \big\{\beta_Y : \mathscr C^{\bf U}_Y \to \mathscr C^{\bf U}_Y/\mathscr C_Y\big\} \cup \big \{\Pi^{\bf U}_{Y,x} :
  \mathscr C^{\bf U}_Y \to \mathbb B(\H_x)\, \big\vert\, x\in Y\big\}\,.
\end{equation*}
Note that, in connection with the explicit form of $\mathfrak F$ and of $\Si_1$\,, we have that
\begin{equation*}
  {\sf S}_Y(F) = {\sf sp} \big[\beta_Y(F_Y)\big] \quad \mbox{and}\quad H_x = \Pi_x^{\bf U}(F) = \Pi^{\bf U}_{Y,x}(F_Y)\,.
\end{equation*} 
By \cite[Th.\,3.6\,,Th.\,3.4]{NP}\,, to show that $\,{\sf sp}(F_Y)$ coincides with the union $\Si_1$ of spectra, it is enough to prove
that the family $\mathfrak F$ is {\it strictly norming}, meaning that for every $G_Y\in\mathscr C^{\bf U}_Y$ there is an element $\pi$ of
the family such that $\|G_Y\|_{\mathscr C^{\bf U}_Y}\,=\,\|\pi(G_Y)\|$\,. In Corollary \ref{feminism} it is already proven that the twisted groupoid algebra $\mathscr C_Y\!:={\sf
  C}^*(\Xi_Y,\o_Y)$ has Exel's property; in other words, by Definition \ref{excel}, the family
\begin{equation*}
  \mathfrak F_0 := \big\{\Pi_{Y,x} : \mathscr C_Y \to \mathbb B(\H_x)\, \big \vert\, x\in Y\big\}
\end{equation*} 
is strictly norming for the ideal $\mathscr C_Y$. Then $\mathfrak F$ will be strictly norming for $\mathscr C^{\bf U}_Y$, by
\cite[Prop.\,3.15]{NP}. For the reader's convenience, we indicate the remaining argument using our notation.

\smallskip
So let $G_Y\in\mathscr C^{\bf U}_Y$\,; by replacing it with $G_Y^{\star_\o}\star_\o G_Y$, one can assume it is positive. If
$\|G_Y\|_{\mathscr C^{\bf U}_Y}\,=\,\|\beta_Y(G_Y)\|_{\mathscr C^{\bf U}_Y/\mathscr C_Y}$, we are done. Otherwise, let $\varphi$ be a
continuous function on ${\sf sp}(F_Y)$ that is zero on ${\sf sp}\big[\beta_Y(F_Y)\big]$\,, satisfying
\begin{equation*}
  \varphi\big( \|G_Y\|_{\mathscr C^{\bf U}_Y} \big) = \,\|G_Y\|_{\mathscr C^{\bf U}_Y} \quad \mbox{and} \quad \varphi(t) \le t\ \ {\rm if}\ \ t\ge 0\,.
\end{equation*} 
Then $\varphi(G_Y)\in\mathscr C_Y$ and $\,\|\varphi(G_Y)\|_{\mathscr C^{\bf U}_Y} \, = \, \|G_Y\|_{\mathscr C^{\bf U}_Y}$. Since the
family $\mathfrak F_0$ is strictly norming, there exists $x\in Y$ such that
\begin{equation*}
  \|G_Y\|_{\mathscr C^{\bf U}_Y} \, =\, \|\varphi(G_Y)\|_{\mathscr C^{\bf U}_Y} \, = \,\|\Pi_{Y,x}\big[\varphi(G_Y)\big]\|_{\mathbb
    B(\H_x)} \,=\, \|\varphi\big[\Pi_{Y,x}(G_Y)\big]\|_{\mathbb B(\H_x)}\, \le \,\|\Pi_{Y,x}(G_Y)\|_{\mathbb B(\H_x)}.
\end{equation*}
This shows that the family $\mathfrak F$ is strictly norming and the proof of ${\sf sp}(F_Y)=\Si_1$ is completed.

\smallskip
The equalities of the various $\Sigma$s follows from some general $C^*$-arguments. Indeed, recall that, by Remark \ref{coerenta}, one can write
\begin{equation*}
  H_{x_i} \, = \, \Pi^{\bf U}_{x_i}(F) \, = \, \Pi^{\bf U}_{\mathcal Q_i,x_i} \big[\rho^{\bf U}_{\mathcal Q_i}(F)\big] \, = \, \Pi^{\bf U}_{\mathcal Q_i,x_i} \big(F_{\mathcal Q_i}\big)\,.
\end{equation*} 
The regular representation $\Pi^{\bf U}_{\mathcal Q_i,x_i} : \mathscr C^{\bf U}_{\mathcal Q_i} \to \mathbb B(\H_{x_i})$ is faithful, by Corollary \ref{construint}, the orbit of $x_i$ being dense in
$\mathcal Q_i$\,. Thus it preserves the spectrum and one gets $\Si_1=\Si_2$\,. Then, clearly $\Si_2\subset\Si_1$\,. Since the family
$\,\{\mathcal Q_i\}_{i\in I}$ covers $Y$, for each $x\in Y$ there is some $i\in I$ such that $x\in\mathcal Q_i$\,. Then $\mathcal
Q_x\subset\mathcal Q_i$ and thus ${\sf sp}\big(H_x\big)\subset{\sf sp}\big(H_{x_i}\big)$\,, by Theorem 3.3 in \cite{MN}. This shows that $\Si_2=\Si_3$ as well. This completes the proof.
\end{proof}


\begin{Remark}\label{camsuy}
\normalfont{By choosing a large unitizations ${\bf U}$, we obtain a larger class of elements to which the results apply, but understanding the set ${\sf S}_Y(F)$ becomes more difficult.
\begin{enumerate}
\item[(a)] Let us assume that $\mathscr C$ is not unital and take ${\bf U=m}$ (if $F$ permits it). Then $\mathscr C^{\bf m}\ni F=G+{\sf s}1$ for some unique $G\in\mathscr C$ and ${\sf
    s}\in\mathbb C$\,. In this case ${\sf sp}(F_Y)=\{\sf s\}$ for every $Y$. Clearly ${\sf s}=0$ if and only if $F\in\mathscr C$\,.
\item[(b)] On the other hand, if one takes ${\bf U}={\bf M}$\,, allowing much larger classes of elements $F$, in most situations the
  component ${\sf S}_Y(F)$ (a spectrum in a corona algebra) is difficult to compute explicitly.
\item[(b)] Actually, one can also work with arbitrary unital algebras (or ``unitizations'') ${\sf C}^*(\Xi,\o)^{\bf U}$ containing ${\sf C}^*(\Xi,\o)$ as an essential ideal. We expect that the choice ${\sf
  C}^*(\Xi,\o)^{\bf U}\!=$ the completion of $\Psi^0(\Xi; \o)$ of Remark \ref{rem.psdo} will be useful in applications. This is the
  case if $\omega = 1$ and $Y=X$ (see \cite{CNQ} and the references therein); the presence of a principal symbol map leads to a
  commutative $C^*$-algebra $\overline{\Psi^0(\Xi)}/{\sf C}^*(\Xi)$ and now the set ${\sf S}_X(F)$ is given simply by the values of this
  principal symbol; see \cite[Corollary 4.16]{CNQ}.  In the presence of a cocycle, these properties remain to be checked.
\end{enumerate}}
\end{Remark}

\subsection{Essential spectra and the Fredholm property}\label{ameny}

In this subsection we are mainly interested in the essential spectra of the operators arising from regular representations of the twisted  groupoid $C^*$-algebra.

\begin{Definition}\label{discresential}
{\rm The discrete spectrum} ${\sf sp}_{\rm dis}(H)$ of a bounded operator $H$ in a Hilbert space $\mathscr H$ is the set of all its
isolated eigenvalues of finite multiplicity. The complement of the discrete spectrum of $H$ is called the {\rm the essential spectrum} of
$H$ and it is denoted by ${\sf sp}_{\rm ess}(H) := {\sf sp}_{\rm dis}(H)^c = {\sf sp}(H)\!\setminus\!{\sf sp}_{\rm dis}(H)$\,.
\end{Definition}

We also have that ${\sf sp}_{\rm ess}(H) = \{\lambda \in \mathbb C \,\big\vert\, H - \lambda \mbox{ is not Fredholm} \}$.

\smallskip
In the following, the framework and the notation are the same as in the previous subsection. {\it We decided to restrict our discussion to
  that of the minimal unitization} ${\bf U}={\bf m}$\,, mainly in order to make the results more concrete (cf. Remark \ref{camsuy}).

\smallskip
The next lemma describes the generic part $\mathcal Q^{\rm g}$ of a $\Xi$-quasi-orbit $\mathcal Q$ in certain cases.

\begin{Lemma}\label{nuispatiu}
Suppose that $\mathcal O$ is a locally closed orbit and set $\mathcal Q:=\overline{\mathcal O}\in\mathbf{qo}(X)$\,. Then $\mathcal O$ is
open in $\mathcal Q$\,, one has $\mathcal O=\mathcal Q^{\rm g}$ and no other orbit contained in $\mathcal Q$ is dense in $\mathcal Q$\,. If,
in addition, the orbit $\mathcal O$ is free (i.e. its isotropy is trivial), then the reduced groupoid $\Xi_\mathcal Q$ is standard, $\mathcal O$ being the main orbit.
\end{Lemma}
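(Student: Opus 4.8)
The plan is to prove the four assertions in order; the first three are essentially point-set topology together with the fact that orbits are equivalence classes, while the last is a bookkeeping check against the definition of a standard groupoid.

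First I would record the elementary topological fact that a subset of a topological space is locally closed precisely when it is open in its own closure. Applying this to $\mathcal O$, whose closure is $\mathcal Q=\overline{\mathcal O}$ by definition, yields at once that $\mathcal O$ is open in $\mathcal Q$, settling the first assertion. This openness is what drives everything that follows.

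For the identity $\mathcal O=\mathcal Q^{\rm g}$, the inclusion $\mathcal O\subseteq\mathcal Q^{\rm g}$ is immediate: any $x\in\mathcal O$ satisfies $\mathcal O_x=\mathcal O$, hence $\mathcal Q_x=\overline{\mathcal O}=\mathcal Q$, so $x$ generates $\mathcal Q$. The reverse inclusion is the one genuine point of the argument, so I would treat it carefully. Given $x\in\mathcal Q^{\rm g}$, the orbit $\mathcal O_x$ is dense in $\mathcal Q$; since $\mathcal O$ is a nonempty subset of $\mathcal Q$ that is open in $\mathcal Q$ by the first assertion, the dense set $\mathcal O_x$ must meet $\mathcal O$. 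But $\mathcal O_x$ and $\mathcal O$ are orbits, hence equivalence classes that are either equal or disjoint, so meeting forces $\mathcal O_x=\mathcal O$ and thus $x\in\mathcal O$. The third assertion then drops out: any orbit dense in $\mathcal Q$ consists of generic points, hence lies in $\mathcal Q^{\rm g}=\mathcal O$ and therefore coincides with $\mathcal O$.

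For the final assertion I would verify the three requirements of \emph{standard} for the restriction $\Xi_\mathcal Q$. Tractability is free: $\mathcal Q=\overline{\mathcal O}$ is closed, hence locally closed, and invariant, so $\Xi_\mathcal Q$ is tractable (with the restricted Haar system) by the remark preceding Remark \ref{idem}. Since $\mathcal Q$ is invariant, the orbits of $\Xi_\mathcal Q$ on $\mathcal Q$ are exactly the $\Xi$-orbits contained in $\mathcal Q$; thus $\mathcal O$ is a dense (because $\overline{\mathcal O}=\mathcal Q$) open (first assertion) orbit, giving topological transitivity with $\mathcal O$ as main orbit. Finally, for $z\in\mathcal O$ the isotropy in the restriction satisfies $(\Xi_\mathcal Q)_z^z=\Xi_z^z$, which is trivial by the freeness hypothesis, so $\Xi_\mathcal Q$ is standard with main orbit $\mathcal O$. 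The only mild care needed throughout is to note that closures and density computed in $X$ agree with those computed in the closed subspace $\mathcal Q$, so the subspace-topology density argument used above is legitimate.
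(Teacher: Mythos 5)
Your proof is correct and follows essentially the same route as the paper: the key step in both is that $\mathcal O$ is open in $\mathcal Q$ (locally closed $\Leftrightarrow$ open in its closure), so a dense orbit must meet $\mathcal O$ and hence equal it — the paper phrases this contrapositively via $\overline{\mathcal O'}\subset\mathcal Q\setminus\mathcal O$, but it is the same argument. Your more explicit verification of the three conditions for ``standard'' at the end is a harmless elaboration of what the paper dismisses as following from the definition.
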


\begin{proof}
It is well-known that the subset $\mathcal O$ is locally closed if, and only if, it is open in its closure $\mathcal Q$\,. All the points
of $\mathcal O$ are generic for $\mathcal Q$\,, by construction. If $\mathcal O'\subset\mathcal Q$ is another orbit, then
$\overline{\mathcal O'}\cap\mathcal O=\emptyset$ (the closures of $\mathcal O'$ in $\mathcal Q$ and in $X$ are the same) and thus
$\mathcal O'$ cannot be dense in $\mathcal Q$\,. So, if $x \notin \mathcal O$\,, then $\mathcal Q_x:=\overline{\mathcal O_x}$ is
strictly contained in $\mathcal Q$ and thus $x$ is not generic.  The last assertion of the lemma follows from the definition of a standard groupoid.
\end{proof}

\begin{Definition}\label{regular}
A point $z \in X := \Xi^{(0)}$ is called {\em regular} if its orbit $\mathcal O_z$ is locally closed and free.
\end{Definition}

The next Theorem provides Fredholmness criteria and information about the essential spectrum of $H_z:=\Pi^{\bf m}_z(F)$\,, for such a
regular unit $z \in X$\,, in terms of the invertibility and of the usual spectra of various other elements or operators.

\begin{Theorem}\label{carefuly}
Let $\Xi \tto X$ be a tractable groupoid endowed with a continuous $2$-cocycle $\o$ and consider an element $F = G+{\sf s}1 \in \mathscr
C^{\bf m} := {\sf C}^*(\Xi,\o)^{\bf m}$\,, with $G \in \mathscr C:={\sf C}^*(\Xi,\o)$ and ${\sf s} \in \mathbb C$\,. Let $z$ be a regular element of the unit space $X = \Xi^{(0)}$.
\begin{enumerate}
\item[(i)]  One has
\begin{equation}\label{jardance}
 {\sf sp}_{\rm ess}(H_z) = {\sf sp} \big(F_{\mathcal Q_z^{\rm n}}\big \vert \mathscr C^{\bf m}_{\mathcal Q_z^{\rm n}}\big)\,.
\end{equation}
\item[(ii)] Let $\,\{\mathcal Q_j\}_{j\in J}\subset\mathbf{qo}\big(\mathcal Q_z\big)$ be a set of quasi-orbits such that $\mathcal Q_z^{\rm n}=\bigcup_{j\in J}\mathcal Q_j$
  (i.e. it is a covering of $\mathcal Q_z^{\rm n}=\mathcal Q_z\!\setminus\!\mathcal O_z$ with quasi-orbits), and for each $j\in
  J$ let $x_j\in\mathcal Q_j^{\sf g}$ (i.e.\! $x_j$ generates the quasi-orbit $\mathcal Q_{j}$)\,.  Then
\begin{equation}\label{jardiancess}
  {\sf sp}_{\rm ess}(H_z) = \{{\sf s}\}\cup\bigcup_{j\in J}{\sf sp}\big(F_{\mathcal Q_j} \!\mid\! \mathscr C_{\mathcal Q_j}^{\bf
    m}\big) = \{{\sf s}\}\cup\bigcup_{j\in J}{\sf sp}\big(H_{x_j}\big) = \{{\sf s}\}\cup\!\bigcup_{x \in \mathcal Q_z^{\rm n}}{\sf sp}\big(H_x\big) \,.
\end{equation}
\item[(iii)] Keeping all the previous assumptions and notations, the following assertions are equivalent:
\begin{itemize}
\item the operator $H_z$ is Fredholm,
\item $F_{\mathcal Q_z^{\rm n}}$ is invertible in $\mathscr C^{\bf m}_{\mathcal Q_z^{\rm n}}$\,,
\item $F_{\mathcal Q_z}\!\notin\mathscr C_{\mathcal Q_z}$ and $F_{\mathcal Q_j}$ is invertible in $\mathscr C_{\mathcal Q_j}^{\bf m}$ for every $j\in J$,
\item $F_{\mathcal Q_z}\!\notin\mathscr C_{\mathcal Q_z}$ and each $H_{x_j}$ is invertible,
\item $F_{\mathcal Q_z}\!\notin\mathscr C_{\mathcal Q_z}$ and $H_x$ is invertible for each $x\in\mathcal Q_z^{\rm n}$\,.
\end{itemize}
\end{enumerate}
\end{Theorem}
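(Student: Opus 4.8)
The plan is to identify ${\sf sp}_{\rm ess}(H_z)$ with a spectrum in a groupoid quotient algebra and then to feed that quotient into the decomposition of Theorem \ref{garefuly}; the bridge between the Calkin algebra and the restriction quotient is that the main-orbit ideal is carried by the regular representation onto the compacts. I would begin by exploiting the regularity of $z$: by Lemma \ref{nuispatiu} the restricted groupoid $\Xi_{\mathcal Q_z}$ is standard with main orbit $\mathcal O_z$, the orbit $\mathcal O_z$ is open in $\mathcal Q_z$, and hence $\mathcal Q_z^{\rm n}=\mathcal Q_z\setminus\mathcal O_z$ is closed and invariant. By Remark \ref{coerenta} I would write $H_z=\Pi^{\bf m}_{\mathcal Q_z,z}(F_{\mathcal Q_z})$, and by Corollary \ref{construint} this regular representation --- unitarily equivalent, via Remark \ref{idem}, to the vector representation $\Pi_0$ on $\H_z\cong L^2(\mathcal O_z,\mu)$ --- is faithful.

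The crux, and the step I expect to be the main obstacle, is to show that $\Pi_{\mathcal Q_z,z}$ maps the ideal $\mathscr C_{\mathcal O_z}$ onto all of $\mathbb K(\H_z)$. Since $\mathcal O_z$ is free, $\Xi_{\mathcal O_z}$ is a pair groupoid, so by Example \ref{cuartet} the algebra $\mathscr C_{\mathcal O_z}$ is elementary and the restricted cocycle is a coboundary; under the pair-groupoid identification the elements of $C_{\rm c}(\Xi_{\mathcal O_z})$ then act on $L^2(\mathcal O_z,\mu)$ as integral operators with continuous compactly supported kernels (the coboundary being absorbed by a unitary), so the image of $\mathscr C_{\mathcal O_z}$ is exactly $\mathbb K(\H_z)$. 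By Corollary \ref{impec} (with $B=\mathcal Q_z$, $A=\mathcal Q_z^{\rm n}$, $B\setminus A=\mathcal O_z$) this $\mathscr C_{\mathcal O_z}$ is an ideal of $\mathscr C_{\mathcal Q_z}$ with quotient $\mathscr C_{\mathcal Q_z^{\rm n}}$, and Remark \ref{exasperare} upgrades this to $\mathscr C^{\bf m}_{\mathcal Q_z}/\mathscr C_{\mathcal O_z}\cong\mathscr C^{\bf m}_{\mathcal Q_z^{\rm n}}$ with $F_{\mathcal Q_z}\mapsto F_{\mathcal Q_z^{\rm n}}$. Faithfulness of $\Pi^{\bf m}_{\mathcal Q_z,z}$ forces $(\Pi^{\bf m}_{\mathcal Q_z,z})^{-1}\big(\mathbb K(\H_z)\big)=\mathscr C_{\mathcal O_z}$, so composing with the Calkin projection yields an injective unital $^*$-morphism of $\mathscr C^{\bf m}_{\mathcal Q_z}/\mathscr C_{\mathcal O_z}$ into the Calkin algebra; by $C^*$-spectral permanence this gives ${\sf sp}_{\rm ess}(H_z)={\sf sp}\big(F_{\mathcal Q_z^{\rm n}}\mid\mathscr C^{\bf m}_{\mathcal Q_z^{\rm n}}\big)$, which is \eqref{jardance} and proves (i).

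For (ii) I would apply Theorem \ref{garefuly} to the closed invariant set $Y=\mathcal Q_z^{\rm n}$ with ${\bf U}={\bf m}$: since $F=G+{\sf s}1$, Remark \ref{camsuy}(a) identifies the corona term ${\sf S}_{\mathcal Q_z^{\rm n}}(F)$ with $\{{\sf s}\}$, and substituting this into the three equal expressions of Theorem \ref{garefuly} and combining with (i) yields \eqref{jardiancess}. For (iii) I would use that $H_z$ is Fredholm iff $0\notin{\sf sp}_{\rm ess}(H_z)$; by (i) this is the invertibility of $F_{\mathcal Q_z^{\rm n}}$ in $\mathscr C^{\bf m}_{\mathcal Q_z^{\rm n}}$, and by the decomposition of (ii) it is equivalent to ${\sf s}\neq 0$ together with invertibility of every $F_{\mathcal Q_j}$ (equivalently every $H_{x_j}$, equivalently every $H_x$ with $x\in\mathcal Q_z^{\rm n}$). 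The condition ${\sf s}\neq 0$ is precisely $F_{\mathcal Q_z}\notin\mathscr C_{\mathcal Q_z}$, because $\rho^{\bf m}_{\mathcal Q_z}$ preserves the scalar part and $\mathscr C_{\mathcal Q_z}$ is non-unital; it is moreover forced automatically, as the summand $\{{\sf s}\}$ always lies in ${\sf sp}_{\rm ess}(H_z)$. Once the compact-operator identification of (i) is in place, parts (ii) and (iii) are essentially bookkeeping around Theorem \ref{garefuly}.
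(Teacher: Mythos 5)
Your proposal is correct and follows essentially the same route as the paper: identify $\mathrm{sp}_{\rm ess}(H_z)$ with the spectrum of $F_{\mathcal Q_z^{\rm n}}$ via the embedding $\mathscr C^{\bf m}_{\mathcal Q_z^{\rm n}}\cong\mathscr C^{\bf m}_{\mathcal Q_z}/\mathscr C_{\mathcal O_z}\hookrightarrow\mathbb B(\H_z)/\mathbb K(\H_z)$ (Corollary \ref{impec}, Remark \ref{exasperare}, faithfulness from Corollary \ref{construint}), then apply Theorem \ref{garefuly} to $Y=\mathcal Q_z^{\rm n}$ for (ii) and Atkinson's theorem for (iii). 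You actually spell out in more detail than the paper the one step it merely asserts, namely that $\Pi_{\mathcal Q_z,z}$ carries $\mathscr C_{\mathcal O_z}$ onto $\mathbb K(\H_z)$ via the pair-groupoid/coboundary identification of Example \ref{cuartet}.
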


\begin{proof}
The idea of the proof is as follows. Proving \eqref{jardance} relies on the fact that the essential spectrum of
\begin{equation*}
  H_z = \Pi^{\bf m}_{z}(F) = \Pi^{\bf m}_{\mathcal Q_{z},z}\big[\rho^{\bf m}_{\mathcal Q_{z}}(F) \big] = \Pi^{\bf m}_{\mathcal Q_{z},z}(F_{\mathcal Q_z}) \in \mathbb B(\H_z)
\end{equation*} 
coincides with the spectrum of its image in the Calkin algebra $\mathbb B(\H_z)/\mathbb K(\H_z)$ and on writing
\begin{equation}\label{bazilica}
\mathscr C^{\bf m}_{\mathcal Q_z^{\rm n}} \cong \mathscr C^{\bf m}_{\mathcal Q_z}/\mathscr C_{\mathcal O_z} \hookrightarrow\mathbb B(\H_z)/\mathbb K(\H_z)\,.
\end{equation}
The isomorphism in \eqref{bazilica} follows from Corollary \ref{impec} and Remark \ref{exasperare} with $A\!:=\mathcal Q_z^{\rm
  n}\subset\mathcal Q_z=:\!B$ and $B\!\setminus\!A=\mathcal O_z$\,. The $C^*$-algebraic embeding $\hookrightarrow\,$ reflects the
fact that the (unitization of the) regular representation $\Pi^{\bf m}_{\mathcal Q_z,z}$ sends injectively $\mathscr C^{\bf m}_{\mathcal
  Q_z}$ into $\mathbb B(\H_z)$\,, while its restriction sends isomorphically $\mathscr C_{\mathcal O_z}$ onto $\mathbb K(\H_z)$ because $z$ is regular.

\smallskip
Let us give now more details. Denoting by $\gamma_z\!:\mathscr C^{\bf m}_{\mathcal Q_z}\to\mathscr C^{\bf m}_{\mathcal Q_z}/\mathscr
C_{\mathcal O_z}$ and $\Gamma_z\!:\mathbb B(\mathscr H_z)\to\mathbb B(\H_z)/\mathbb K(\H_z)$ the canonical quotient maps, we have the diagram of $^*$-morphisms:
\begin{equation}\label{coliflor}
\begin{diagram}
  \node{\mathscr C^{\bf m}}\arrow{e,t}{\rho^{\bf m}_{\mathcal Q_z}}\node{\mathscr C^{\bf m}_{\mathcal
      Q_z}}\arrow{s,r}{\gamma_z}\arrow{e,t}{\Pi^{\bf m}_{\mathcal Q_z,z}}\node{\mathbb
    B(\H_z)}\arrow{s,r}{\Gamma_z}\\ \node{\mathscr C^{\bf m}_{\mathcal Q_z^{\rm n}}}\arrow{e,t}{\cong}\node{\mathscr C^{\bf
      m}_{\mathcal Q_z}/\mathscr C_{\mathcal  O_z}}\arrow{e,t}{\tilde\Pi^{\bf m}_{\mathcal Q_z,z}}\node{\mathbb B(\H_z)/\mathbb K(\H_z)}
\end{diagram}
\end{equation}
Then one writes
\begin{equation*}
\begin{aligned}
  {\sf sp}_{\rm ess}(H_z)& = {\sf sp}\big[\Gamma_z(H_z)\big]={\sf sp}\big[\Gamma_z\big(\Pi_z^{\bf m}(F)\big)\big]\\ &={\sf
    sp}\big[\Gamma_z\big(\Pi^{\bf m}_{\mathcal Q_z,z}[\rho^{\bf m}_{\mathcal Q_z}\!(F)]\big)\big]\\ &={\sf
    sp}\big[\tilde\Pi^{\bf m}_{\mathcal Q_z,z}\big(\gamma_z[\rho^{\bf m}_{\mathcal Q_z}\!(F)]\big)\big]\\ &={\sf sp}\big[\gamma_z(F_{\mathcal Q_z})\big]\,.
\end{aligned}
\end{equation*}
For the last equality, recall that $\tilde\Pi_{\mathcal Q_z,z}$ is injective.

\smallskip
Finally $\,{\sf sp}\Big(\gamma_z(F_{\mathcal Q_z})\!\mid\!\mathscr C^{\bf m}_{\mathcal Q_z}/\mathscr C_{\mathcal O_z}\Big)={\sf
  sp}\Big(F_{\mathcal Q_z^{\rm n}}\big\vert\mathscr C^{\bf m}_{\mathcal Q_z^{\rm n}}\Big)$ follows from the isomorphism
$\mathscr C^{\bf m}_{\mathcal Q_z}/\mathscr C_{\mathcal O_z}\!\cong\mathscr C^{\bf m}_{\mathcal Q_z^{\rm n}}$\,, under which
$\gamma_z(F_{\mathcal Q_z})$ corresponds to $F_{\mathcal Q_z^{\rm n}}$\,; this finishes the proof of (i).

\smallskip
The point (ii) follows from (i) and from Theorem \ref{garefuly} applied to the close invariant set $Y\!:=\mathcal Q_z^{\rm n}=\mathcal
Q_z\!\setminus\!\mathcal O_z\subset\mathcal Q_z$\,; see also Remark \ref{camsuy}.

\smallskip
Atkinson's Theorem states that an operator $H$ is Fredholm if and only if it is invertible modulo the compact operators, i.e.\! if and only
if $0$ does not belong to its essential spectrum.  In \eqref{jardiancess}, ${\sf s}\ne 0$ means precisely that $F_{\mathcal
  Q_z}\!\in\mathscr C^{\sf m}_{\mathcal Q_z}\!\setminus\!\mathscr C_{\mathcal Q_z}$.  Then (iii) follows from (ii) by taking into
account the following general simple observation: Let $\{\mathscr D_k\}_{k\in K}$ be a family of $C^*$-algebras and $a_k\in\mathscr D_k$
for every $k\in K$; then one has $0\notin{\bigcup_{k\in K}{\sf sp}\big(a_k\big)}$ if and only if each $a_k$ is invertible.
\end{proof}

\begin{Example}\label{camsui}
{\rm Let $z \in X$ be such that its $\Xi$-orbit $\mathcal O_z$ is closed and free. Then all the points of the orbit $\mathcal O_z$ are
  generic, and thus the unions in \eqref{jardiancess} are all void. In this case, we simply get ${\sf sp}_{\rm ess}(H_z)=\{{\sf s}\}$\,,
  which also follows directly from $\mathcal Q_z=\mathcal O_z$\,. If  $F\in\mathscr C$, then ${\sf sp}_{\rm ess}(H_z)=\{0\}$\,. This
  corresponds to the fact that, in this case, the operator $H_z$ is compact; see Example \ref{cuartet}.  }
\end{Example}

In particular, we are interested in {\it standard} twisted groupoids $(\Xi,\o)$ (cf. Definition \ref{lista2}) and, for an element $F=G+{\sf
  s}1\in{\sf C}^*(\Xi,\o)^{\bf m}\equiv\mathscr C^{\bf m}$, in its image through the vector representation, cf. Remark \ref{idem}. So
$H_0:=\Pi^{\bf m}_0(F)$ is a bounded operator in $\H_0:=L^2(M,\mu)$\,; here $M$ is the main orbit (open and dense). We state a consequence of
Theorem \ref{carefuly} on the essential spectrum; the corresponding Fredholm criteria can be easily formulated by the reader.

\begin{Corollary}\label{carefully}
Suppose that, in addition to being tractable, the twisted groupoid $\Xi \tto X$ is standard (see Definition \ref{lista2}), with
$X=M\sqcup X_\infty$\,, where $M=X^{\rm g}$ is the main orbit. Let $\,\{\mathcal Q_j\}_{j\in J}$ be a covering of $X_\infty$ with
quasi-orbits\,, and for each $j\in J$ let $x_j$ be a generic element of $\mathcal Q_j$\,.  Then
\begin{equation}\label{jardiances}
  {\sf sp}_{\rm ess}(H_0) = {\sf sp}\big(F_{X_\infty}\!\mid\!\mathscr C^{\bf m}_{X_\infty}\,\big) = \{{\sf s}\}\cup\bigcup_{j\in J}{\sf
    sp}\big(F_{\mathcal Q_j}\!\mid\!\mathscr C_{\mathcal Q_j}\big)=\{{\sf s}\}\cup\bigcup_{j\in J}{\sf
    sp}\big(H_{x_j}\big)=\{{\sf s}\}\cup\!\bigcup_{x\in X_\infty}\!{\sf sp}(H_x)\,.
\end{equation}
\end{Corollary}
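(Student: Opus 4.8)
The plan is to derive this Corollary directly from Theorem \ref{carefuly} by specializing to a unit $z$ lying in the main orbit $M$ and then transporting the conclusion to the vector representation $H_0$. Since $(\Xi,\o)$ is standard, the main orbit $M$ is open, dense, and free, so any $z\in M$ is a regular point in the sense of Definition \ref{regular}: its orbit $\mathcal O_z = M$ is locally closed (being open) and free. For such a $z$ one has $\mathcal Q_z = \overline{\mathcal O_z} = \overline{M} = X$ by density of $M$, whence the non-generic part is
\begin{equation*}
\mathcal Q_z^{\rm n} = \mathcal Q_z\!\setminus\!\mathcal O_z = X\!\setminus\!M = X_\infty\,.
\end{equation*}
Thus the hypotheses of Theorem \ref{carefuly} are satisfied with $\mathcal Q_z^{\rm n} = X_\infty$, and the given covering $\{\mathcal Q_j\}_{j\in J}$ of $X_\infty$ together with the generic points $x_j$ plays exactly the role demanded there.

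Next I would connect $H_z := \Pi_z^{\bf m}(F)$ to $H_0 := \Pi_0^{\bf m}(F)$. By Remark \ref{idem} the vector representation $\Pi_0$ is unitarily equivalent to $\Pi_z$ via ${\sf R}_z : L^2(M;\mu) \to L^2(\Xi_z;\lambda_z)$, and this equivalence survives passage to the minimal unitization: the adjoined unit maps to the identity operator in both representations, so ${\sf R}_z^{-1}\Pi_z^{\bf m}(F){\sf R}_z = \Pi_0^{\bf m}(F)$, i.e. $H_0 = {\sf R}_z^{-1}H_z{\sf R}_z$. Since the essential spectrum is a unitary invariant, this gives ${\sf sp}_{\rm ess}(H_0) = {\sf sp}_{\rm ess}(H_z)$, even though the two operators act on a priori different Hilbert spaces.

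Finally I would simply quote Theorem \ref{carefuly}. Part (i), applied with $\mathcal Q_z^{\rm n} = X_\infty$, yields ${\sf sp}_{\rm ess}(H_0) = {\sf sp}\big(F_{X_\infty}\mid\mathscr C^{\bf m}_{X_\infty}\big)$, the first equality of \eqref{jardiances}; part (ii), with the same substitution and the chosen covering, yields the three remaining equalities. I do not expect a genuine obstacle here, as the Corollary is essentially a transcription of Theorem \ref{carefuly} in the special case $\mathcal Q_z = X$; the only points meriting care are verifying that an arbitrary $z\in M$ is regular (immediate from standardness and density) and checking that the unitary equivalence of Remark \ref{idem} persists under unitization, which is what legitimizes replacing ${\sf sp}_{\rm ess}(H_z)$ by ${\sf sp}_{\rm ess}(H_0)$ throughout.
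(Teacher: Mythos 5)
Your proposal is correct and follows essentially the same route as the paper: the authors likewise observe that for a standard groupoid any $z$ in the main orbit $M$ is regular with $\mathcal Q_z=X$ and $\mathcal Q_z^{\rm n}=X_\infty$, apply Theorem \ref{carefuly}, and replace $H_z$ by the unitarily equivalent $H_0$ via Remark \ref{idem}. Your extra remark that the unitary equivalence passes to the minimal unitization is a small detail the paper leaves implicit, but it is exactly the right point to check.
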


\begin{proof}
This basically follows from Theorem \ref{carefuly}, since for standard groupoids, by definition, the entire unit space $X$ is a quasi-orbit
and the points $z$ belonging to the main orbit $M$ are regular. One must also replace $H_z:=\Pi^{\bf m}_z(F)$ of Theorem \ref{carefuly} by
the unitarily equivalent vector represented version $H_0\!:=\Pi^{\bf m}_0(F)$\,, as explained in Remark \ref{idem}.
\end{proof}

\subsection{Essential numerical ranges}\label{caloriforit}

We treat now various types of numerical ranges, in a version adapted to operator algebras \cite{BD,BD2,HRS,SW}. If $\mathscr D$ is a unital
$C^*$-algebra , then $\mathcal S(\mathscr D)$ denotes its {\em state space}, that is, the set of positive linear forms $\phi:\mathscr D\to\mathbb C$ such that $\phi(1_\mathscr D)=1$.

\begin{Definition}\label{statespate}
Let $\mathscr D$ be a unital $C^*$-algebra.  The (algebraic) {\em numerical range} of $F\in\mathscr D$ is
\begin{equation*}
  {\sf nr}(F) := \{\phi(F)\!\mid\phi\in\S(\mathscr D)\}\,.
\end{equation*}
\end{Definition}

The numerical range set ${\sf nr}(F)$ of $F\in\mathscr D$ is a compact, convex subset of $\mathbb C$ containing the convex hull of the spectrum:
\begin{equation*}
  {\rm co}[{\sf sp}(F)] \subset {\sf nr}(F) \subset \big\{\lambda \in \mathbb C\mid |\lambda| \le\, \|F\|_\mathscr D\!\big\}\,.
\end{equation*} 
For normal elements $F\in\mathscr D$ these are equalities, but in general the inclusions are strict. If $H\in\mathscr D=\mathbb B(\H)$
for some Hilbert space $\H$\,, then ${\sf nr}(H)$ is the closure of {\it the operator (spatial) numerical range} \cite{GR}
\begin{equation*}
{\sf nr}_0(H):=\big\{\<Hu,u\>_\H\,\big\vert\,u\in\H\,,\|u\|_\H\,=\!1\big\}\,.
\end{equation*}

For any subset $A$ of $\mathbb C$, we denote by $\overline{\rm co}(A)$ the smallest closed, convex set containing $A$\,.

\begin{Theorem}\label{garefly}
Let $\Xi$ be a tractable groupoid endowed with a continuous $2$-cocycle $\o$ and let $Y\subset X$ a closed invariant set of
units. Consider an element $F={\sf s} + G \in{\sf C}^*(\Xi,\o)^{\bf m}$, with ${\sf s}\in\mathbb C$ and $G\in{\sf C}^*(\Xi,\o)$\,, and
set $H_x\!:=\Pi_x^{\bf m}(F)={\sf s}+\Pi_x(G)\,$ for every unit $x$\,. Then
\begin{equation}\label{hardiangess}
  {\sf nr}(F_Y)=\overline{\rm co}\Big[\{{\sf s}\}\cup\bigcup_{x\in Y}{\sf nr}\big(H_x\big)\Big]\,.
\end{equation}
\end{Theorem}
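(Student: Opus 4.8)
The plan is to reduce the statement to the spectral decomposition already obtained in Theorem \ref{garefuly}, by passing to support functions. Both sides of \eqref{hardiangess} are compact convex subsets of $\mathbb C$ (the left-hand side is a numerical range, and the right-hand side is the closed convex hull of a bounded set, bounded because $\|H_x\|\le\|F_Y\|$ and $|{\sf s}|\le\|F_Y\|$), so it suffices to show that their support functions $h_K(\theta):=\sup_{z\in K}\mathrm{Re}(e^{-i\theta}z)$ agree for every $\theta\in[0,2\pi)$. The computation rests on the elementary identity, valid in any unital $C^*$-algebra $\mathscr D$ and for any $T\in\mathscr D$,
\begin{equation*}
  \sup_{z\in{\sf nr}(T)}\mathrm{Re}(e^{-i\theta}z)=\sup_{\phi\in\S(\mathscr D)}\phi\big(\mathrm{Re}(e^{-i\theta}T)\big)=\max{\sf sp}\big(\mathrm{Re}(e^{-i\theta}T)\big)\,,
\end{equation*}
where $\mathrm{Re}(e^{-i\theta}T):=\tfrac12\big(e^{-i\theta}T+e^{i\theta}T^*\big)$ is self-adjoint; here one uses $\mathrm{Re}\,\phi(T)=\phi\big(\mathrm{Re}(e^{-i\theta}T)\big)$ for a state $\phi$ together with the standard fact that $\sup_{\phi\in\S(\mathscr D)}\phi(A)=\max{\sf sp}(A)$ for self-adjoint $A$.

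The key step is to apply this directionwise to the self-adjoint element $A_\theta:=\tfrac12\big(e^{-i\theta}F+e^{i\theta}F^*\big)\in\mathscr C^{\bf m}$. Since $\rho_Y^{\bf m}$, $\beta_Y$ and the regular representations are $^*$-morphisms, they commute with the real-part operation, so that $(A_\theta)_Y=\mathrm{Re}(e^{-i\theta}F_Y)$, $\Pi_x^{\bf m}(A_\theta)=\mathrm{Re}(e^{-i\theta}H_x)$, and $\beta_Y\big((A_\theta)_Y\big)=\mathrm{Re}(e^{-i\theta}{\sf s})\,1$, the last equality because $F_Y={\sf s}+G_Y$ with $G_Y\in\mathscr C_Y$ forces $\beta_Y(F_Y)={\sf s}\,1$ in $\mathscr C_Y^{\bf m}/\mathscr C_Y$. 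Applying the first equality of Theorem \ref{garefuly} to $A_\theta$ in place of $F$ and taking the maximum of the resulting (genuine, hence compact) union of spectra yields
\begin{equation*}
  \max{\sf sp}\big((A_\theta)_Y\big)=\max\Big\{\mathrm{Re}(e^{-i\theta}{\sf s})\,,\ \sup_{x\in Y}\max{\sf sp}\big(\mathrm{Re}(e^{-i\theta}H_x)\big)\Big\}\,,
\end{equation*}
using that the maximum over a union of nonempty spectra is the supremum of the individual maxima.

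Translating both sides back through the identity of the first paragraph, the left-hand side is exactly $h_{{\sf nr}(F_Y)}(\theta)$, while the right-hand side is $\max\big\{\mathrm{Re}(e^{-i\theta}{\sf s}),\ \sup_{x\in Y}h_{{\sf nr}(H_x)}(\theta)\big\}$, which is precisely the support function at $\theta$ of the closed convex hull on the right of \eqref{hardiangess}. Since the support functions coincide for all $\theta$, the two compact convex sets are equal, proving \eqref{hardiangess}. I expect the main point to be conceptual rather than technical: the observation that the non-convex spectral decomposition of Theorem \ref{garefuly}, applied to the self-adjoint real parts $A_\theta$, is exactly what produces the convex-hull structure of the numerical range, and that the restriction to the minimal unitization ${\bf U}={\bf m}$ is what collapses the quotient contribution ${\sf S}_Y$ to the single point $\{{\sf s}\}$. (The inclusion $\supseteq$ can alternatively be seen at once, since ${\sf nr}(H_x)\subset{\sf nr}(F_Y)$ and ${\sf s}\in{\sf nr}(F_Y)$ by pulling back states along $\Pi_{Y,x}^{\bf m}$ and along $\beta_Y$, but here it is already subsumed in the support-function equality.)
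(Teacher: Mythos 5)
Your argument is correct, but it takes a genuinely different route from the paper. The paper proves the inclusion $\supseteq$ by noting that the algebraic numerical range shrinks under unital $^*$-morphisms, and proves $\subseteq$ by invoking the Stampfli--Williams growth criterion (${\sf nr}(G)\subset K$ iff $\Vert(G-\lambda)^{-1}\Vert\le{\rm dist}(\lambda,K)^{-1}$ for $\lambda\notin K$) together with the strictly norming property of the family $\{\Pi^{\bf m}_{Y,x}\}_{x\in Y\sqcup\{\infty\}}$ coming from Exel's property (Corollary \ref{feminism}), which lets it transfer resolvent-norm bounds from the operators $H_x$ back to $F_Y$. You instead derive \eqref{hardiangess} formally from the spectral decomposition of Theorem \ref{garefuly} applied to the directional real parts $A_\theta=\tfrac12(e^{-i\theta}F+e^{i\theta}F^*)$, comparing support functions of the two compact convex sets; the only $C^*$-facts you need are that states satisfy $\phi(\mathrm{Re}(e^{-i\theta}T))=\mathrm{Re}(e^{-i\theta}\phi(T))$ and that $\sup_\phi\phi(A)=\max{\sf sp}(A)$ for self-adjoint $A$. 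All the steps check out: $\rho_Y^{\bf m}$, $\beta_Y$ and $\Pi_x^{\bf m}$ commute with the real-part operation, $\beta_Y(F_Y)={\sf s}1$ in the one-dimensional quotient for ${\bf U}={\bf m}$, and the maximum of the union in Theorem \ref{garefuly} (which is closed, being a spectrum) is the supremum of the individual maxima. Your route makes it conceptually transparent why the convex hull appears (convexity enters only through the direction-wise reduction) and shows that the numerical-range statement is essentially a corollary of the spectral one rather than an independent theorem; note, though, that it still rests on Exel's property, just indirectly through Theorem \ref{garefuly}, and that the paper's resolvent-based argument is the one that would generalize to settings where the numerical range is not recoverable from spectra of real parts (e.g.\ Banach-algebra numerical ranges). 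Both proofs share the same unaddressed edge case when $\mathscr C_Y$ is already unital, where the quotient $\mathscr C_Y^{\bf m}/\mathscr C_Y$ degenerates and the contribution $\{{\sf s}\}$ requires a separate convention.
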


\begin{proof}
It is convenient to use the abbreviations $\mathscr C_Y\!:={\sf C}^*\big(\Xi_Y,\o_Y\big)$ and $\mathscr B_x\!:=\mathbb B(\H_x)$\,.
To unify notations, let us set $F_\infty := s$ and $Y^{\bf m}:=Y\sqcup\{\infty\}$\,. The main equation, Equation
\eqref{hardiangess} may hence be written as ${\sf nr}(F_Y)=\overline{\rm co}\Big[\bigcup_{x\in Y^{\bf m}}\!{\sf nr}\big(H_x\big)\Big]$\,.  Let us also set $\mathscr
B_\infty\!:=\mathbb C$, and
\begin{equation*}
  \Pi^{\bf m}_{\infty}:\mathscr C_Y^{\bf m}\to\mathscr C^{\bf m}_Y/\mathscr C_Y\cong\mathbb C\,,\quad\Pi^{\bf
    m}_{Y,\infty}\big({\sf t}+G^0\big)\!:={\sf t}\,.
\end{equation*}
Note that ${\sf nr}\big[\Pi^{\bf m}_{\infty}(F_Y)\big]=\{{\sf s}\}$\,, so \eqref{hardiangess} may be rewritten ${\sf nr}(F_Y)=\overline{\rm co}\Big[\bigcup_{x\in Y^{\bf m}}\!{\sf nr}\big(H_x\big)\Big]$\,.

\smallskip
The inclusion ${\sf nr}(F_Y)\supset\overline{\rm co}\Big[\bigcup_{x\in Y^{\bf m}}\!{\sf nr}\big(H_x\big)\Big]$ follows from the fact that
the algebraical numerical range shrinks under $C^*$-morphisms, as shown in the proof of Theorem 4.4 of \cite{MN}. For a given $x\in
Y^{\bf m}$ this should be applied to $H_x=\Pi^{\bf m}_{Y,x}(F_Y)$\,. Then we use the fact that ${\sf nr}(F_Y)$ is known to be compact and convex.

\smallskip
For the opposite inclusion, we use the following criterion \cite[Th.2]{SW}, valid for any element $G$ of a unital $C^*$-algebra
$\mathscr C$\,: {\it Let $K\subset\mathbb C$ be closed and convex; then ${\sf nr}(G)\subset K$ if and only if }
\begin{equation*}
  \big\Vert(G-\lambda)^{-1}\big\Vert_\mathscr C\le{\rm dist}(\lambda,K)^{-1},\quad\forall\,\lambda\notin K.
\end{equation*}

We recall from the proof of Theorem \ref{garefuly} that the family of morphisms
\begin{equation*}
   \big\{\Pi^{\bf m}_{Y,x}\!:\mathscr C_Y^{\bf m}\!\to\mathscr B_x\!\mid\!x\in Y^{\bf m}\big\}
\end{equation*}
is strictly norming (this is a consequence of Corollary \ref{feminism}). Thus, for any $\lambda\in\mathbb C$ there exists $y(\lambda)\in Y^{\bf m}$ such that
\begin{equation}\label{faradecare}
  \big\Vert(F_Y-\lambda)^{-1} \big \Vert_{\mathscr C_Y^{\bf m}} = \big\Vert\big(H_{y(\lambda)} - \lambda\big)^{-1} \big\Vert_{\mathscr B_{y(\lambda)}} .
\end{equation}
Then, for an arbitrary convex compact set $K$, using \eqref{faradecare} and the criterion:
\begin{equation*}
\begin{aligned}
  \overline{\rm co} \Big[\bigcup_{x\in Y^{\bf m}} \! {\sf nr}(H_x)\Big]\subset K& \Longleftrightarrow \bigcup_{x\in Y^{\bf m}}\! {\sf nr}(H_x)\subset K\\
  & \Longleftrightarrow \big \Vert(H_x-\lambda)^{-1}\big\Vert_{\mathscr B_x} \le {\rm
    dist}(\lambda,K)^{-1},\ \forall\,\lambda\notin K\,, x\in Y^{\bf m}\\
  & \Longrightarrow \big\Vert(H_{y(\lambda)}-\lambda)^{-1}\big\Vert_{\mathscr B_{y(\lambda)}} \le {\rm  dist}(\lambda,K)^{-1},\ \forall\,\lambda\notin K\\
  & \Longleftrightarrow \big\Vert(F_Y-\lambda)^{-1}\big\Vert_{\mathscr C_Y^{\bf m}}\! \le {\rm dist}(\lambda,K)^{-1},\ \forall\,\lambda\notin K\\
  &\Longleftrightarrow {\sf nr}(F_Y)\subset K.
\end{aligned}
\end{equation*}
It follows that ${\sf nr}(F_Y)\subset\overline{\rm co}\Big[\bigcup_{x\in Y^{\bf m}}\!{\sf nr}\big(H_x\big)\Big]$\,.
\end{proof}

\begin{Remark}\label{inutila}
\normalfont{Simple examples show that the union in the right hand side of \eqref{hardiangess} is not convex in general. Take for instance
  $X=Y=\{\e_1,\e_2\}$ and $\Xi_1=\T=\Xi_2$\,, leading to a very simple minded group bundle.  Then
\begin{equation*}
{\sf C}^*(\Xi)\cong{\sf C}^*(\Z)\oplus{\sf C}^*(\Z)\cong C(\T)\oplus C(\T)\,,
\end{equation*} 
and it is already unital. If $g\in C(\T)$ is a real function, it is a self-adjoint element and its numerical range is a real segment $I=g(\T)$\,. Then consider the element $F:=(g,ig)$\,.  }
\end{Remark}

\begin{Definition}\label{esnumer}
Let $H$ be an element of a unital $C^*$-subalgebra $\mathscr D$ of $\,\mathbb B(\H)$ for some Hilbert space $\H$\,. Its ({\rm
  algebraical) essential numerical range} ${\sf nr}_{\rm ess}(H)$ is the (algebraic) numerical range of the canonical image of $H$ in the quotient $\mathscr D/\mathscr D\cap\mathbb K(\H)$\,.
\end{Definition}

The essential numerical range contains the essential spectrum (often strictly). We refer to \cite{BD2} for more information on this topic.

\begin{Theorem}\label{carefree}
Let $\Xi$ be a tractable groupoid endowed with a continuous $2$-cocycle $\o$ and consider an element $F={\sf s}+G\!\in{\sf C}^*(\Xi,\o)^{\bf m}$\,, with $G\in\mathscr C:={\sf C}^*(\Xi,\o)$
and ${\sf s}\in\mathbb C$\,. Let $z$ be a regular element of the unit space $X=\Xi^{(0)}$. One has
\begin{equation}\label{jardanche}
  {\sf nr}_{\rm ess}(H_z) = {\sf nr}\big(F_{\mathcal Q_z^{\rm n}}\big\vert\mathscr C^{\bf m}_{\mathcal Q_z^{\rm n}}\big)=\overline{\rm co}\Big[\{{\sf
      s}\}\cup\!\bigcup_{x\in\mathcal Q_z^{\rm n}}\!{\sf nr}\big(H_x\big)\Big]\,.
\end{equation}
\end{Theorem}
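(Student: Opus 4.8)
The plan is to prove the two equalities in \eqref{jardanche} separately, the first by transporting the essential numerical range into the quotient $\mathscr C^{\bf m}_{\mathcal Q_z^{\rm n}}$, and the second by a direct application of Theorem \ref{garefly}. The whole argument runs parallel to that of Theorem \ref{carefuly}, reusing verbatim the commuting diagram \eqref{coliflor} and the chain of identifications \eqref{bazilica}.

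For the first equality I would apply Definition \ref{esnumer} with $\mathscr D=\mathbb B(\H_z)$, so that $\mathscr D\cap\mathbb K(\H_z)=\mathbb K(\H_z)$ and ${\sf nr}_{\rm ess}(H_z)$ is the numerical range of the image $\Gamma_z(H_z)$ in the Calkin algebra $\mathbb B(\H_z)/\mathbb K(\H_z)$. Tracing \eqref{coliflor} exactly as in the essential-spectrum computation gives $\Gamma_z(H_z)=\tilde\Pi^{\bf m}_{\mathcal Q_z,z}\big(\gamma_z(F_{\mathcal Q_z})\big)$, while $\gamma_z(F_{\mathcal Q_z})$ corresponds to $F_{\mathcal Q_z^{\rm n}}$ under the isomorphism $\mathscr C^{\bf m}_{\mathcal Q_z}/\mathscr C_{\mathcal O_z}\cong\mathscr C^{\bf m}_{\mathcal Q_z^{\rm n}}$. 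The extra ingredient not needed in the spectral case is that the numerical range is intrinsic, i.e. invariant under a unital embedding of $C^*$-algebras: since the unitized regular representation carries the unit to the identity operator, $\tilde\Pi^{\bf m}_{\mathcal Q_z,z}$ is a unital injective $^*$-morphism, so every state of the Calkin algebra restricts to a state of the image while, by the Hahn--Banach extension of states, every state of $\mathscr C^{\bf m}_{\mathcal Q_z^{\rm n}}$ extends to one of the Calkin algebra. Hence ${\sf nr}_{\rm ess}(H_z)={\sf nr}\big(\Gamma_z(H_z)\big)={\sf nr}\big(F_{\mathcal Q_z^{\rm n}}\mid\mathscr C^{\bf m}_{\mathcal Q_z^{\rm n}}\big)$.

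For the second equality I first record that $\mathcal Q_z^{\rm n}=\mathcal Q_z\setminus\mathcal O_z$ is a closed invariant subset of $X$: since $z$ is regular, Lemma \ref{nuispatiu} shows that $\mathcal O_z$ is open in the closed set $\mathcal Q_z$, so its complement is closed in $\mathcal Q_z$, hence in $X$, and it is invariant as a union of orbits. I then apply Theorem \ref{garefly} with $Y:=\mathcal Q_z^{\rm n}$ to the element $F={\sf s}+G$, which yields ${\sf nr}\big(F_{\mathcal Q_z^{\rm n}}\big)=\overline{\rm co}\big[\{{\sf s}\}\cup\bigcup_{x\in\mathcal Q_z^{\rm n}}{\sf nr}(H_x)\big]$. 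One only has to note that the operators $H_x=\Pi^{\bf m}_x(F)$ occurring here agree with those produced by the restricted representations $\Pi^{\bf m}_{Y,x}(F_Y)$, which is precisely the factorization of Remark \ref{coerenta}.

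The genuinely new point, relative to the routine transcription of the proof of Theorem \ref{carefuly}, is the passage from spectrum to numerical range across the embedding $\tilde\Pi^{\bf m}_{\mathcal Q_z,z}$: whereas spectrum is preserved by any injective $^*$-morphism, for the numerical range one must invoke the state-restriction/extension argument above, and one must also keep in mind that the outcome is a closed convex hull rather than a bare union, as already illustrated in Remark \ref{inutila}. I do not expect a serious obstacle, since Theorem \ref{garefly} already packages the convexity and closure operations and the diagram \eqref{coliflor} supplies the required identification in the Calkin algebra; the only care needed is in checking that the embedding in question is unital, which is where the argument could silently go wrong.
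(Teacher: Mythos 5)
Your proposal is correct and follows essentially the same route as the paper: the first equality via the identification $\mathscr C^{\bf m}_{\mathcal Q_z^{\rm n}}\cong\mathscr C^{\bf m}_{\mathcal Q_z}/\mathscr C_{\mathcal O_z}\hookrightarrow\mathbb B(\H_z)/\mathbb K(\H_z)$ coming from the diagram \eqref{coliflor}, and the second by applying Theorem \ref{garefly} to the closed invariant set $Y=\mathcal Q_z^{\rm n}$. The only difference is that you make explicit the state restriction/extension argument showing that the numerical range is unchanged under the unital embedding into the Calkin algebra, a point the paper leaves implicit in the phrase ``follows from the way the essential numerical range has been defined.''
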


\begin{proof}
As in the proof of Theorem \ref{carefuly}, the first equality in \eqref{jardanche} follows from the way the essential numerical range has been defined and from the inclusion
\begin{equation}\label{bazilicac}
  \mathscr C^{\bf m}_{\mathcal Q_z^{\rm n}}\cong\mathscr C^{\bf m}_{\mathcal Q_z}/\mathscr C_{\mathcal O_z}\hookrightarrow\mathbb B(\H_z)/\mathbb K(\H_z)\,.
\end{equation}

The second equality in \eqref{jardanche} follows from Theorem \ref{garefly} applied to the close invariant set $Y\!:=\mathcal
Q_z^{\rm n}=\mathcal Q_z\!\setminus\!\mathcal O_z\subset\mathcal Q_z$\,.
\end{proof}

\begin{Remark}\label{cuorbite}
\normalfont{For simplicity, we skipped assertions involving restrictions to quasi-orbits and operators associated to selections
  of generic points of these quasi-orbits, as in Theorems \ref{garefuly} and \ref{carefuly}; they may be easily provided by the reader.  }
\end{Remark}

\subsection{Absence of the discrete spectrum}\label{gomitte}

We start from the following simple result:

\begin{Lemma}\label{laball}
Let $\mathscr D$ be a unital $C^*$-algebra of linear bounded operators in the Hilbert space $\H$\,. The following assertions are equivalent:
\begin{enumerate}
\item[(a)] 
One has $\mathscr D\cap\mathbb K(\H)=\{0\}$\,.
\item[(b)] All the elements of $\mathscr D$ have void discrete spectrum, i.e. ${\sf sp}_{\rm ess}(H)={\sf sp}(H)$\,, for every $H\in\mathscr D$.
\item[(c)] 
The operator $H\in\mathscr D$ is Fredholm if and only if it is invertible.
\item[(d)] One has ${\sf nr}_{\rm ess}(H)={\sf nr}(H)$ for every $H\in\mathscr D$.
\end{enumerate}
\end{Lemma}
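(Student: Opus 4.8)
The plan is to funnel all four conditions through the identification of the quotient $\mathscr D/\big(\mathscr D\cap\mathbb K(\H)\big)$ with the image $q(\mathscr D)$ of $\mathscr D$ under the canonical projection $q:\mathbb B(\H)\to\mathbb B(\H)/\mathbb K(\H)$ onto the Calkin algebra, exploiting two standard facts: Atkinson's Theorem (an operator $H$ is Fredholm if and only if its class $q(H)$ is invertible in the Calkin algebra) and spectral permanence (invertibility in a unital $C^*$-subalgebra sharing the unit of the larger algebra coincides with invertibility in the larger algebra). Throughout, the unit $1_\mathscr D$ is taken to be the identity operator $I$ on $\H$, which is the relevant situation. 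I would establish the chain $\mathrm{(a)}\Rightarrow\mathrm{(c)}$, $\mathrm{(c)}\Rightarrow\mathrm{(a)}$, $\mathrm{(b)}\Leftrightarrow\mathrm{(c)}$ and $\mathrm{(a)}\Leftrightarrow\mathrm{(d)}$.

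For $\mathrm{(a)}\Rightarrow\mathrm{(c)}$: if $\mathscr D\cap\mathbb K(\H)=\{0\}$, then $\ker\big(q|_\mathscr D\big)=\{0\}$, so $q|_\mathscr D$ is an isometric $^*$-isomorphism of $\mathscr D$ onto the $C^*$-subalgebra $q(\mathscr D)$, which shares the unit $q(I)$ of the Calkin algebra. For $H\in\mathscr D$ this yields the equivalences: $H$ is Fredholm $\iff q(H)$ is invertible in the Calkin algebra $\iff q(H)$ is invertible in $q(\mathscr D)$ (spectral permanence) $\iff H$ is invertible in $\mathscr D$ (transport along the isomorphism) $\iff H$ is invertible in $\mathbb B(\H)$ (spectral permanence once more). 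As ``invertible $\Rightarrow$ Fredholm'' is automatic, this is exactly $\mathrm{(c)}$. The equivalence $\mathrm{(b)}\Leftrightarrow\mathrm{(c)}$ is then formal: using the characterization ${\sf sp}_{\rm ess}(H)=\{\lambda\mid H-\lambda\text{ is not Fredholm}\}$ recalled above, one has ${\sf sp}_{\rm dis}(H)=\{\lambda\in{\sf sp}(H)\mid H-\lambda\text{ is Fredholm}\}$, so $\mathrm{(c)}$ says precisely that $0\notin{\sf sp}_{\rm dis}(H)$ for every $H\in\mathscr D$. Since $\mathscr D$ is unital we may replace $H$ by any translate $H-\lambda\in\mathscr D$, and $\lambda\in{\sf sp}_{\rm dis}(H)$ iff $0\in{\sf sp}_{\rm dis}(H-\lambda)$; running over all $\lambda$ turns this into ${\sf sp}_{\rm dis}(H)=\emptyset$ for every $H$, which is $\mathrm{(b)}$.

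For the reverse implication $\mathrm{(c)}\Rightarrow\mathrm{(a)}$ I argue by contraposition and produce a single explicit witness that simultaneously disproves $\mathrm{(d)}$. If $\mathscr D\cap\mathbb K(\H)\neq\{0\}$, choose a nonzero $K$ in it; then $P:=K^*K$ is a nonzero positive compact operator belonging to $\mathscr D$. The element $H:=\|P\|\,I-P\in\mathscr D$ equals $\|P\|\big(I-\|P\|^{-1}P\big)$, hence is Fredholm (a nonzero scalar times identity-minus-compact), while it fails to be injective because $\|P\|$ is genuinely attained as an eigenvalue of the positive compact operator $P$; thus $H$ is Fredholm but not invertible, contradicting $\mathrm{(c)}$. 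For $\mathrm{(a)}\Leftrightarrow\mathrm{(d)}$: if $\mathrm{(a)}$ holds the quotient map $\mathscr D\to\mathscr D/\big(\mathscr D\cap\mathbb K(\H)\big)$ is an isomorphism sending $H$ to $H$, so the numerical range upstairs and the essential numerical range downstairs coincide, giving $\mathrm{(d)}$. Conversely, with the same $P$, its image in the quotient is $0$, whence ${\sf nr}_{\rm ess}(P)={\sf nr}(0)=\{0\}$, while $P\ge 0$ forces $\|P\|\in{\sf sp}(P)\subset{\sf nr}(P)$ with $\|P\|>0$; hence ${\sf nr}_{\rm ess}(P)\neq{\sf nr}(P)$, contradicting $\mathrm{(d)}$.

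I expect the one delicate point to be pure bookkeeping: being scrupulous about \emph{which} algebra invertibility and spectra are computed in, so that Atkinson's Theorem and the two invocations of spectral permanence interlock correctly. The positive compact witness $P=K^*K$ driving the two reverse implications is routine, as is the translation trick giving $\mathrm{(b)}\Leftrightarrow\mathrm{(c)}$.
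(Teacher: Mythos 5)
Your proof is correct and rests on the same mechanism as the paper's: identifying $\mathscr D/\big(\mathscr D\cap\mathbb K(\H)\big)$ with the image of $\mathscr D$ in the Calkin algebra, invoking Atkinson's Theorem, and treating (b)$\Leftrightarrow$(c) as the formal translation of ``not Fredholm'' versus ``not invertible'' across all translates $H-\lambda$. The only substantive difference is in the reverse implications: the paper argues $(b)\Rightarrow(a)$ by asserting that a nonzero compact operator in $\mathscr D$ ``surely has some discrete spectrum,'' and settles the converse of (d) by citing the fact that ${\sf nr}_{\rm ess}(H)=\{0\}$ forces $H$ compact, whereas you produce the explicit positive witness $P=K^*K$ and the Fredholm-but-noninvertible element $\|P\|\,I-P$. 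Your version is in fact slightly more careful on this point: a general nonzero compact operator (e.g.\ a quasinilpotent one) need not have an isolated eigenvalue of finite multiplicity, so the passage to $K^*K$ --- which the paper leaves implicit --- is exactly what makes the contrapositive go through; and your direct computation ${\sf nr}_{\rm ess}(P)=\{0\}\neq{\sf nr}(P)$ makes the $(d)\Rightarrow(a)$ direction self-contained rather than dependent on the cited characterization of compactness via the essential numerical range.
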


\begin{proof}
$(b)\!\Rightarrow\!(a).$ Suppose that $\mathscr D$ contains a non-zero  compact operator $H$. This operator surely has some discrete spectrum, contradicting the hypothesis.

\smallskip
$(a)\!\Rightarrow\!(b).$ The essential spectrum of $H\in\mathscr D$ coincides with the usual spectrum of its canonical image in $\mathscr
D/\mathscr D\cap\mathbb K(\H)$\,, which now is just $\mathscr D$. Therefore ${\sf sp}(H)={\sf sp}_{\rm ess}(H)$\,.

\smallskip
$(b)\!\Leftrightarrow\!(c).$ This follows from the fact that $\lambda\notin{\sf sp}_{\rm ess}(L)$ if and only of $L-\lambda$ is
Fredholm and $\lambda\notin{\sf sp}(L)$ if and only if $L-\lambda$ is invertible.

\smallskip
$(a)\!\Leftrightarrow\!(d).$ This is similar to $(a)\!\Leftrightarrow\!(b);$ one has to use the definition of the
essential numerical range and the known fact that if ${\sf nr}_{\rm ess}(H)=\{0\}$ if and only if $H$ is compact \cite{BD2}.
\end{proof}

A representation of a $C^*$-algebra $\Pi:\mathscr C\to\mathbb B(\H)$ is called {\it essential} \cite[Page 19]{BO} if one has $\,\Pi(\mathscr C)\cap\mathbb K(\H)=\{0\}$\,.

\smallskip
We continue to assume that {\it $\o$ is a $2$-cocycle on a tractable groupoid $\Xi$} and we use all the previous notations.  Recalling
that $\Pi_{x}=\Pi_{\mathcal Q_x,x}\circ\rho_{\mathcal Q_x}$, one can state an obvious consequence of Lemma \ref{laball}.

\begin{Corollary}\label{oareofi?}
For $x\in X$ the following assertions are equivalent:
\begin{enumerate}
\item[($a_1$)] The representation $\Pi_x:{\sf C}^*(\Xi,\o)\to\mathbb B(\H_x)$ is essential.
\item[($a_2$)] The faithful representation $\Pi_{\mathcal Q_x,x}:{\sf C}^*\big(\Xi_{\mathcal Q_x},\o_{\mathcal Q_x}\big)\to\mathbb B(\H_x)$ is essential.
\item[(b)] The discrete spectrum of $\,\Pi_x(F)$ is void for every $F\in {\sf C}^*(\Xi,\o)$\,.
\item[(c)] For each $F\in{\sf C}^*(\Xi,\o)$ the operator $\Pi_x(F)$ is Fredholm if and only if it is invertible.
\item[(d)] One has ${\sf nr}_{\rm ess}\big[\Pi_x(F)\big]={\sf nr}\big[\Pi_x(F)\big]$ for every $F\in {\sf C}^*(\Xi,\o)$\,.
\end{enumerate}
\end{Corollary}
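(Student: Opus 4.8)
The plan is to read all four assertions off Lemma~\ref{laball}, applied to a single unital $C^*$-algebra of operators on $\H_x$, after first disposing of $(a_1)\Leftrightarrow(a_2)$ by a range computation.

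\emph{Step 1: $(a_1)\Leftrightarrow(a_2)$.} Recall from the discussion preceding the statement that $\Pi_x=\Pi_{\mathcal Q_x,x}\circ\rho_{\mathcal Q_x}$, where the restriction morphism $\rho_{\mathcal Q_x}:{\sf C}^*(\Xi,\o)\to{\sf C}^*(\Xi_{\mathcal Q_x},\o_{\mathcal Q_x})$ is surjective (Corollary~\ref{impec}, with $A:=\mathcal Q_x\subset X=:B$). Hence the two representations $\Pi_x$ and $\Pi_{\mathcal Q_x,x}$ have the \emph{same} range in $\mathbb B(\H_x)$, namely $\mathscr A:=\Pi_x\big({\sf C}^*(\Xi,\o)\big)=\Pi_{\mathcal Q_x,x}\big({\sf C}^*(\Xi_{\mathcal Q_x},\o_{\mathcal Q_x})\big)$. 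Since being essential is the property $\mathscr A\cap\mathbb K(\H_x)=\{0\}$ of this common range, $(a_1)$ and $(a_2)$ say literally the same thing; the faithfulness of $\Pi_{\mathcal Q_x,x}$ (Corollary~\ref{construint}) is what entitles us to call it ``the faithful representation''.

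\emph{Step 2: reduction to Lemma~\ref{laball}.} I would form the unital $C^*$-algebra $\mathscr D:=\mathscr A+\mathbb C\,1_{\H_x}\subset\mathbb B(\H_x)$ by adjoining the identity operator, and apply Lemma~\ref{laball} to it. The four items of the Corollary are then matched, one by one, with the four equivalent items of the Lemma for $\mathscr D$. Items $(b)$ and $(d)$ transfer for free between ``$\Pi_x(F)$, $F\in{\sf C}^*(\Xi,\o)$'' and ``$H\in\mathscr D$'': every $H\in\mathscr D$ has the form $\Pi_x(F)+\lambda 1_{\H_x}$, and the passage $H\mapsto H+\lambda 1_{\H_x}$ merely translates the spectrum, the discrete spectrum, the numerical range and the essential numerical range by $\lambda$; thus voidness of the discrete spectrum (resp.\ the equality ${\sf nr}_{\rm ess}={\sf nr}$) holds throughout $\mathscr D$ iff it holds on $\mathscr A$. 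As in the proof of Lemma~\ref{laball}, one uses that $\mathscr A$ is a $C^*$-algebra, e.g.\ to replace a nonzero compact element $\Pi_x(F)$ by the self-adjoint $\Pi_x(F^{\star_\o}\!\star_\o F)\in\mathscr A$, which carries a nonzero isolated eigenvalue. Item $(c)$ is read as condition $(c)$ of the Lemma in the unital algebra $\mathscr D$, which is the natural ambient for Fredholmness and invertibility.

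\emph{Step 3: the crux.} The only point that is not purely formal, and which I expect to be the main obstacle, is the identification of Lemma~\ref{laball}$(a)$ for $\mathscr D$ with $(a_1)$, i.e.\ the claim
\begin{equation*}
  \mathscr D\cap\mathbb K(\H_x)=\{0\}\ \Longleftrightarrow\ \mathscr A\cap\mathbb K(\H_x)=\{0\}\,.
\end{equation*}
The implication $\Rightarrow$ is trivial. For $\Leftarrow$, assume $\mathscr A\cap\mathbb K(\H_x)=\{0\}$ and take $\Pi_x(F)+\lambda 1_{\H_x}\in\mathbb K(\H_x)$; the case $\lambda=0$ is immediate, so suppose $\lambda\neq0$. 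Then the quotient map onto the Calkin algebra $\mathbb B(\H_x)/\mathbb K(\H_x)$ is injective on $\mathscr A$, and its image contains the Calkin unit, so $\mathscr A$ is forced to be unital with unit $e$ a compact perturbation of $1_{\H_x}$. If $e\neq 1_{\H_x}$ then $1_{\H_x}-e$ is a nonzero finite-rank projection and $\mathscr A=e\mathscr A e$ annihilates the nonzero subspace $(1_{\H_x}-e)\H_x$, contradicting the non-degeneracy of the regular representation $\Pi_x$ on the infinite-dimensional space $\H_x$; while if $e=1_{\H_x}$ then $1_{\H_x}\in\mathscr A$, so the compact element lies in $\mathscr A\cap\mathbb K(\H_x)=\{0\}$. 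Thus adjoining $1_{\H_x}$ creates no new compact operators, Lemma~\ref{laball}$(a)$ is exactly $(a_1)$, and together with Steps~1 and~2 this closes the cycle of equivalences.
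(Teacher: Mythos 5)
Your route is the paper's route: the text offers no more than ``Recalling that $\Pi_{x}=\Pi_{\mathcal Q_x,x}\circ\rho_{\mathcal Q_x}$, one can state an obvious consequence of Lemma~\ref{laball}'', and your Steps~1 and~3 supply exactly the details that sentence suppresses. Step~1 (the two representations have the same range because $\rho_{\mathcal Q_x}$ is onto) and Step~3 (adjoining $1_{\H_x}$ creates no new compact operators, via non-degeneracy of $\Pi_x$) are correct and genuinely needed, since Lemma~\ref{laball} is stated only for unital algebras while $\mathscr A:=\Pi_x\big({\sf C}^*(\Xi,\o)\big)$ need not be unital.

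The one step that does not go through as written is the claim in Step~2 that item $(c)$ ``is read as condition $(c)$ of the Lemma in the unital algebra $\mathscr D$''. Unlike $(b)$ and $(d)$, the property ``Fredholm iff invertible'' is not stable under the translation $H\mapsto H+\lambda 1_{\H_x}$, so quantifying it over $\mathscr A$ is strictly weaker than quantifying it over $\mathscr D=\mathscr A+\mathbb C\,1_{\H_x}$, and the implication $(c)\Rightarrow(a_1)$ is lost. Concretely, take $x$ in a closed free infinite orbit, so that $\H_x$ is infinite-dimensional and $\mathscr A=\mathbb K(\H_x)$ (Examples~\ref{cuartet} and~\ref{camsui}): every $\Pi_x(F)$ is then compact, hence neither Fredholm nor invertible, so $(c)$ holds vacuously while $(a_1)$, $(b)$ and $(d)$ all fail. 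To close the cycle at $(c)$ you must either let $F$ range over a unitization ${\sf C}^*(\Xi,\o)^{\bf m}$ --- which is how Fredholmness is handled everywhere else in the paper, e.g.\ Theorem~\ref{carefuly}(iii) --- or supply a separate argument that $(c)$ over $\mathscr A$ alone forces $\mathscr A\cap\mathbb K(\H_x)=\{0\}$, which the example shows it does not. This defect is inherited from the literal statement of the Corollary, but your proof should not present the matching at $(c)$ as purely formal.
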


\begin{Remark}\label{sichichef}
\normalfont{For any $H\in\mathbb B(\H)$ it is known that ${\sf nr}_{\rm ess}(H)\!=\bigcap_{K\in\mathbb K(\H)}{\sf
    nr}(H+K)$\,. Hence the point (d) of the Corollary can be rewritten as
\begin{equation*}
  {\sf nr}\big[\Pi_x(F)+K\big]\supset{\sf nr}\big[\Pi_x(F)\big]\,,\quad\forall\,F\in {\sf C}^*(\Xi,\o)\,,\ K\in\mathbb K(\H_x)\,.
\end{equation*}}
\end{Remark}

A given $C^*$-algebra might only have {\it essential} faithful representations, and this can be applied to $\Pi_{\mathcal Q_x,x}$\,,
which is faithful. To arrive at such a situation, some more terminology is needed.

\smallskip
It will be convenient to assume that $X=\Xi^{(0)}$ is compact.  We say that the (closed) invariant subset $A$ of $X$ is {\it minimal} if all
the orbits contained in $A$ are dense.  Equivalently: $A$ and $\emptyset$ are the only closed invariant subsets of $A$\,. The point
$x\in X$ is said to be {\it almost periodic} if $\,{\sf V}_{x,U}\!:=\{\xi\in\Xi_x\!\mid\!\xi x\xi^{-1}\!\in U\}$ is syndetic
for every neighborhood $U$ of $x$ in $X$, i.e. ${\sf KV}_{x,U}=\Xi_x$ for some compact subset ${\sf K}$ of $\,\Xi$\,. In
\cite[Appendix]{MN}, assuming that $X$ is compact, it is shown that the unit $x$ is an almost periodic point if and only if its
quasi-orbit $\mathcal Q_x$ is minimal. A groupoid is called {\it topologically principal} if the units with trivial isotropy form a
dense subspace of the unit space. It is {\it \'etale} if ${\rm d}:\Xi\to X$ is a local homeomorphism (the same will hold for ${\rm r}:\Xi\to X$, of course).

\begin{Proposition}\label{simplicity}
Assume that $x$ is almost periodic, $\,\Xi_{\mathcal Q_x}$ is \'etale and topologically principal and the fiber $\Xi_x$ is infinite. Then
the representation $\Pi_x$ is essential and for every $F\in{\sf C}^*(\Xi,\o)$ the operator $\Pi_x(F)$ has purely essential spectrum
and its essential numerical range coincides with the usual one.
\end{Proposition}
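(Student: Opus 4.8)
The plan is to reduce the whole statement to the single assertion that the regular representation $\Pi_{\mathcal Q_x,x}$ is essential, and then to read off all three conclusions from Corollary \ref{oareofi?}. Write $A:={\sf C}^*(\Xi_{\mathcal Q_x},\o_{\mathcal Q_x})$ and $\H_x=L^2(\Xi_x;\lambda_x)$. Since the orbit $\mathcal O_x$ is dense in $\mathcal Q_x$, the map $\Pi_{\mathcal Q_x,x}:A\to\mathbb B(\H_x)$ is the faithful representation appearing in condition ($a_2$) of that Corollary. Thus, once I show that $\Pi_{\mathcal Q_x,x}(A)\cap\mathbb K(\H_x)=\{0\}$, the equivalences ($a_2$)$\Leftrightarrow$($a_1$)$\Leftrightarrow$(b)$\Leftrightarrow$(d) give simultaneously that $\Pi_x$ is essential, that $\Pi_x(F)$ has purely essential spectrum for every $F\in{\sf C}^*(\Xi,\o)$, and that ${\sf nr}_{\rm ess}[\Pi_x(F)]={\sf nr}[\Pi_x(F)]$.

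\smallskip
The decisive input is the simplicity of $A$. Because $x$ is almost periodic, the quasi-orbit $\mathcal Q_x$ is minimal, by the characterization recalled above from \cite[Appendix]{MN}. By hypothesis $\Xi_{\mathcal Q_x}$ is \'etale and topologically principal, and it is amenable since it is tractable; consequently its reduced and full twisted algebras coincide and $A$ is simple, by the simplicity criterion for amenable, minimal, topologically principal \'etale groupoids, in its twisted version. Note also that $A$ is unital: as $\mathcal Q_x$ is compact and the unit space of an \'etale groupoid is clopen, the characteristic function $e:=\mathbf 1_{\mathcal Q_x}$ lies in $C_{\rm c}(\Xi_{\mathcal Q_x})$ and, using the normalization \eqref{normaliz}, one checks that $\Pi_{\mathcal Q_x,x}(e)=\mathrm{Id}_{\H_x}$. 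In particular $\Pi_{\mathcal Q_x,x}$ is nonzero, hence faithful by simplicity.

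\smallskip
It remains to exclude the elementary alternative. The set $\Pi_{\mathcal Q_x,x}(A)\cap\mathbb K(\H_x)$ is a closed, two-sided, self-adjoint ideal of $\Pi_{\mathcal Q_x,x}(A)\cong A$, so by simplicity it is either $\{0\}$ or all of $\Pi_{\mathcal Q_x,x}(A)$. In the second case one would have $\mathrm{Id}_{\H_x}=\Pi_{\mathcal Q_x,x}(e)\in\mathbb K(\H_x)$, which forces $\H_x$ to be finite-dimensional, i.e. the fiber $\Xi_x$ to be finite. This contradicts the hypothesis that $\Xi_x$ is infinite. Hence $\Pi_{\mathcal Q_x,x}(A)\cap\mathbb K(\H_x)=\{0\}$, so $\Pi_{\mathcal Q_x,x}$ is essential and the Proposition follows from Corollary \ref{oareofi?}.

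\smallskip
The hard part is genuinely the simplicity of the twisted algebra $A$, and it is here that some care is required: the extension device of Subsection \ref{coforit} cannot be used directly. Indeed, by \eqref{umflat} the isotropy groups of the extension $(\Xi_{\mathcal Q_x})^{\o}$ are of the form $\T\times(\Xi_{\mathcal Q_x})_y^y$, so they contain $\T$ everywhere; the extension is therefore never topologically principal, and one cannot transport an untwisted simplicity statement along the embedding $\delta$. One must instead argue in the twisted \'etale setting itself, where the point is that $C(\mathcal Q_x)$ sits inside $A$ as a Cartan-type subalgebra: topological principality guarantees that every nonzero ideal of $A$ meets $C(\mathcal Q_x)$ nontrivially, and minimality then forces that intersection, hence the ideal, to be all of $A$.
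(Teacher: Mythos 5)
Your proof is correct and follows essentially the same route as the paper's: reduce to the essentiality of the faithful representation $\Pi_{\mathcal Q_x,x}$ via Corollary \ref{oareofi?}, deduce simplicity and unitality of ${\sf C}^*(\Xi_{\mathcal Q_x},\o_{\mathcal Q_x})$ from minimality (via almost periodicity), compactness, \'etaleness and topological principality, and conclude using the fact that a unital infinite-dimensional representation of a simple unital $C^*$-algebra cannot meet the compacts nontrivially. The paper simply cites \cite[pag.\,103]{Re} for the twisted simplicity statement rather than sketching the Cartan-subalgebra argument, but the logical structure is identical.
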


\begin{proof}
By Corollary \ref{oareofi?}, it is enough to show that $\Pi_{\mathcal Q_x,x}$ is essential.

\smallskip
But {\it all the unital infinite-dimensional representations $\pi:\mathscr C\to\mathbb B(\H_\pi)$ of a simple unital $C^*$-algebra are (faithful and) essential}. To check this, suppose
that $\pi(\mathscr C)\cap\mathbb K(\H_\pi)\ne\{0\}$\,. It is an ideal of $\pi(\mathscr C)$\,, thus $\pi^{-1}\big[\pi(\mathscr C)\cap\mathbb
  K(\H_\pi)\big]\ne\{0\}$ is an ideal of $\mathscr C$. Since $\mathscr C$ is simple, one has $\pi^{-1}\big[\pi(\mathscr C)\cap\mathbb
  K(\H_\pi)\big]=\mathscr C$, implying that $\pi(\mathscr C)\cap\mathbb K(\H_\pi)=\pi(\mathscr C)$\,, i.e. $\pi(\mathscr
C)\subset\mathbb K(\H_\pi)$\,. Then $\pi(1_\mathscr C)={\rm id}_{\H_\pi}$ is a compact operator, which is impossible since $\H_\pi$ in infinitely dimensional.

\smallskip
So we have to prove all these properties for $\Pi_{\mathcal Q_x,x}:{\sf C}^*\big(\Xi_{\mathcal Q_x},\o_{\mathcal Q_x}\big)\to\mathbb B(\H_x)$\,.

\smallskip
Since $\mathcal Q_x$ is compact and $\Xi_{\mathcal Q_x}$ is \'etale, ${\sf C}^*\!\big(\Xi_{\mathcal Q_x},\o_{\mathcal Q_x}\big)$ is
unital. Actually in this case $\mathcal Q_x$ is an open compact subset of $\Xi_{\mathcal Q_x}$\,, one has $C(\mathcal Q_x)\hookrightarrow
C_0(\Xi_{\mathcal Q_x})\subset{\sf C}^*\big(\Xi_{\mathcal Q_x},\o_{\mathcal Q_x}\big)$ and the constant function $1:\mathcal
Q_x\to\mathbb C$ (reinterpreted as the characteristic function of $\mathcal Q_x$ defined on $\Xi_{\mathcal Q_x}$) becomes the unit of
the twisted groupoid algebra. It is sent by $\Pi_{\mathcal Q_x,x}$ into ${\rm id}_{\H_x}$\,. The fibre $\Xi_x$ is discrete and infinite, hence the space $\H_x=\ell^2(\Xi_x)$ is infinite dimensional.

\smallskip
As said above, $x$ being almost periodic, the quasi-orbit $\mathcal Q_x$ is minimal. Also using the fact that $\Xi_{\mathcal Q_x}$ is
\'etale and topologically transitive, by \cite[pag103]{Re}, the twisted $C^*$-algebra of the reduced groupoid $\Xi_{\mathcal Q_x}$ is simple.  The proof is finished.
\end{proof}

\begin{Remark}\label{sanusperi}
  \normalfont{In \cite{BCFS} it is proven that, for untwisted \'etale groupoids, the simplicity of the groupoid algebra implies that the groupoid is minimal and topologically principal.  }
\end{Remark}

Some results in \cite{BLLS1,BLLS2} (see also references therein) treat the absence of the discrete spectrum in a setup presenting some
similarities with ours. Using the theory of limit operators, they also treat the Banach space case, which is inaccessible to us. Restricted
to the Hilbert space setting, they basically work with minimal actions $\th$ of discrete countable groups $\G$ on compact Hausdorff spaces
$X$. This would correspond to the \'etale transformation groupoid $\Xi:=X\!\rtimes_\th\G$\,, on which there is no $2$-cocycle. But no
condition as topological principalness is in used. So there are some improvements but also some drawbacks in our Proposition \ref{simplicity}.

\smallskip
In \cite{KPS} a precise criterion of simplicity is given for twisted $C^*$-algebras of row-finite $k$-graphs with no sources. Via our
Proposition \ref{simplicity}, this would lead to the absence of the discrete spectrum for operators obtained from twisted Cuntz-Krieger
families. The constructions of \cite{KPS} are too involved to be reproduced here.

\section{Examples: magnetic operators on nilpotent Lie groups}\label{sec.four}

\subsection{Magnetic pseudo-differential operators}\label{gerfomoreni}

Let us fix a connected, simply connected nilpotent Lie group $\G$ with unit $\e$\,. If $Y \in \g$ and $\mathcal X \in \g^\sharp$ (the dual of
the Lie algebra $\g$)\,, we set $\<Y\!\!\mid\!\!\mathcal X\>:=\mathcal X(Y)$\,. The exponential map $\exp:\mathfrak g\to\G$ is a
diffeomorphism, with inverse $\log:\G\rightarrow\mathfrak g$\,, sending Haar measures $dY$ on $\g$ to a Haar measures $da$ on
$\G$\,. We proceed to construct magnetic pseudo-differential operators in this context.

\smallskip
Let $B$ be a magnetic field, i.e. a closed $2$-form on $\G$\,, seen as a smooth map associating to any $a\in\G$ the skew-symmetric bilinear
form $B(a):\g\times\g\rightarrow\mathbb R$\,. One more ingredient is available, making use of the de Rham cohomology. A $1$-form $A$ on
$\G$\,, also called {\it vector potential}, will be seen as a (smooth) map $A:\G\rightarrow\g^\sharp$ and it gives rise to a $1$-form
$A\circ\exp:\g\rightarrow\g^\sharp$. Being a closed $2$-form, the magnetic field can be written as $B=dA$ for some $1$-form $A$\,. Any
other vector potential $\tilde A$ satisfying $B=d\tilde A$ is related to the first by $\tilde A=A+d\psi$\,, where $\psi$ is a smooth function on $\G$\,; in physics this is called {\it gauge covariance}.

\smallskip
For $a,b\in\G$, one sets $[a,b]:\mathbb R\rightarrow\G$ by
\begin{equation*}\label{abb}
  [a,b]_s := \exp[(1-s)\log a+s\log b]=\exp[\log a+s(\log b-\log a)]\,.
\end{equation*}
The function $[a,b]$ is smooth and satisfies $[b,a]_s=[a,b]_{1-s}$\,, $[a,b](0)=a$, and $[a,b](1)=b$\,. In addition, $[\e,y]$ is the $1$-parameter subgroup passing through $y$\,.  {\it The segment in
  $\G$ connecting $a$ to} $b$ is $[[a,b]]:=\big\{\,[a,b]_s\mid s\in[0,1]\,\big\}$\,. {\it The circulation of the vector potential $A$ through the segment $[[a,b]]$} is the real number
\begin{equation*}\label{circ}
  \Gamma_A[[a,b]] \equiv \int_{[[a,b]]}\!A := \int_0^1\!\big\<\log b-\log a\,\big\vert\, A\big([a,b]_s\big)\big\>\,ds\,.
\end{equation*}

To conclude the construction, for every vector potential $A$ with $B=dA$\,, the outcome is a pseudo-differential correspondence
\cite[Sect.4]{BM}, assigning to suitable symbols $\si:\G\times\g^\sharp\to\mathbb C$ {\it magnetic pseudo-differential operators}
\begin{equation}\label{frame}
\big[\mathfrak{Op}_{A}(\si)v\big](a) := \int_{\G}\int_{\mathfrak g^\sharp} e^{i\<\log(ab^{-1})\mid\mathcal
  X\>}e^{i\int_{[[b,a]]}\!A}\,\si\big(b,\mathcal X\big)v(b)\,db\,d\mathcal X,
\end{equation}
acting in $\mathcal S(\G)$ (see below) or in $L^2(\G)$\,. Gauge covariance extends to these operators: if two vector potentials $A,A'$
define the same magnetic field, then $\mathfrak{Op}_{A}(\si)$ and $\mathfrak{Op}_{A'}(\si)$ are unitarily equivalent.

\begin{Remark}\label{dinozaur}
{\rm If $\G=\mathbb{R}^n$, the dual group can be identified with the vector space dual $\mathbb{R}^n$. It is also true that $\R^n$ is
  identified with its Lie algebra and (then) with its dual, so in this case the maps $\exp$ and $\log$ simply disappear from the formulas. The resulting expression
\begin{equation*}\label{framek}
  \big[\mathfrak{Op}_{A}(\si)v\big](a)=\int_{\R^n}\!\int_{\R^n}e^{i\<a-b\mid\mathcal X\>} e^{i\int_{[[b,a]]}\!A}\,\si\big(b,\mathcal X\big)v(b)\,db\,d\mathcal X,
\end{equation*}
a magnetic modification of the right quantization, is the starting point of a pseudo-differential theory studied in depth in \cite{IMP,MP,MPR2} and other articles ($\tau$-quantizations can also
be accomodated, including the Kohn-Nirenberg and the Weyl forms).  }
\end{Remark}

To model the behavior of $B$ at infinity, and to be more explicit on the class of symbols $\si$, let us consider a $\G$-equivariant
compactification $X = \G\sqcup X_\infty$ of $\G$\,. More precisely, we assume that $\G$ is a dense, open subset of the second countable,
Hausdorff, compact space $X$. Thus the action of $\G$ on itself by left translations extends to a continuous action $\th$ of $\G$ on
$X$. It is useful to regard $X$ as the Gelfand spectrum of the Abelian $C^*$-algebra $C(X)$\,. This one is isomorphic to the $C^*$-algebra
$\A$ formed by the restrictions of all the elements of $C(X)$ to $\G$\,. Clearly, $\A$ is a $C^*$-algebra composed of bounded and
uniformly continuous complex functions defined on $\G$\,, invariant under left translations and and containing $C_0(\G)$\,. The condition
on $B$ is simply that it extends continuously to $X$ or, equivalently, that its components in a base are elements of $\A$\,. {\it Note that
  such an assumption is not imposed on vector potentials $A$ generating the given magnetic field.} Actually this would be
impossible in most situations. Very often there is not even a bounded vector potential with $B=dA$\,; think of constant or periodic magnetic fields, for instance.

\smallskip
We denote by $\S(\g^\sharp)$ the Schwartz space on the vector space $\g^\sharp$ and by $\A \otimes_{\pi}\!\S(\g^\sharp)$ the projective
tensor product. A function $\si \in \A \otimes_{\pi}\!\S(\g^\sharp)$\,, a priori defined on $\G \times \g^\sharp$, has a
unique continuous extension to $X\times \g^\sharp$ that we shall use without any further comment, as in \eqref{defacut} below.  We then
have the following theorem that describes the essential spectrum of "order $-\infty$ magnetic pseudodifferential operators".

\begin{Theorem}\label{specmagnet}
Let $\si \in C(X) \otimes_{\pi} \S(\g^\sharp)$\,. For each $x\in X_\infty$ we set
\begin{equation*}\label{defacut}
  \si_x:\G\times\g^\sharp\to\mathbb C\,,\quad\si_x(a,\mathcal X):=\si\big(\th_a(x),\mathcal X\big)\,,
\end{equation*}
as well as $B_x(a):=B\big[\th_a(x)\big]$\,, and choose a vector potential $A_x$ such that $B_x=dA_x$\,. One has
\begin{equation}\label{firear}
  {\sf sp}_{\rm ess}\big[\mathfrak{Op}_{A}(\si)\big]=\bigcup_{x\in X}{\sf sp}\big[\mathfrak{Op}_{A_x}\!(\si_x)\big]
\end{equation}
and
\begin{equation}\label{firer}
  {\sf nr}_{\rm ess}\big[\mathfrak{Op}_{A}(\si)\big] = \overline{\rm co}\Big(\bigcup_{x\in X}{\sf nr}\big[\mathfrak{Op}_{A_x}\!(\si_x)\big]\Big)\,.
\end{equation}
\end{Theorem}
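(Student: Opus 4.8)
The plan is to realize the correspondence $\si\mapsto\mathfrak{Op}_A(\si)$ as a gauge transform of the vector representation of a standard twisted groupoid, and then to read off \eqref{firear} and \eqref{firer} from Corollary \ref{carefully} and Theorem \ref{carefree}. First I would introduce the transformation groupoid $\Xi:=X\rtimes_\th\G\tto X$ attached to the extended action $\th$ of $\G$ on $X$, equipped with the magnetic $2$-cocycle $\omega_B$ produced by the flux of $B$ through the triangles spanned by composable arrows; this is well defined because $B$ extends continuously to $X$. Amenability of $\Xi$ follows from that of the nilpotent group $\G$, and $X$ being compact, second countable and Hausdorff makes $\Xi$ tractable; since $\G$ is an open dense orbit carrying the free left-translation action, $(\Xi,\omega_B)$ is standard (Definition \ref{lista2}), with main orbit $M=\G$ and region at infinity $X_\infty=X\setminus\G$.

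Next I would set up the dictionary between symbols and elements of the twisted algebra. A partial Fourier transform of $\si$ in the $\g^\sharp$-variable, followed by the identification $\g\cong\G$ via $\exp$, converts $\si$ into a function $F$ on $X\times\G\cong\Xi$; since $\si\in C(X)\otimes_\pi\S(\g^\sharp)$, this $F$ lies in $\mathscr C:={\sf C}^*(\Xi,\omega_B)$, so we are in the case ${\sf s}=0$. The essential point, which is the content of the pseudo-differential correspondence of \cite{BM} and of the twisted crossed-product picture of \cite{PR1}, is that composing the vector representation $\Pi_0$ of Remark \ref{idem} with this partial Fourier transform and conjugating by the gauge unitary determined by a potential $A$ with $B=dA$ (multiplication by a phase built from the circulation $\Gamma_A$) reproduces exactly $\mathfrak{Op}_A(\si)$ of \eqref{frame}. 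Thus $\mathfrak{Op}_A(\si)$ is unitarily equivalent to $\Pi_0(F)$ on $L^2(\G)$. Performing the same computation with the regular representation $\Pi_x$, which acts on $L^2(\Xi_x;\lambda_x)\cong L^2(\G)$ and feels only the values of $\si$ and $B$ along the orbit through $x$, identifies $\Pi_x(F)$ with $\mathfrak{Op}_{A_x}(\si_x)$ for each boundary point $x\in X_\infty$.

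With these identifications in hand, both formulas follow from the abstract theory. Equation \eqref{firear} is Corollary \ref{carefully} applied to $F\in\mathscr C$ (hence ${\sf s}=0$), the singleton $\{0\}$ being absorbed into the union because $0\in{\sf sp}\big(\mathfrak{Op}_{A_x}(\si_x)\big)$ for every boundary operator on the non-compact group $\G$. Equation \eqref{firer} is the essential-numerical-range decomposition of Theorem \ref{carefree} for a regular $z\in M$, for which $\mathcal Q_z=X$ and $\mathcal Q_z^{\rm n}=X_\infty$, again with ${\sf s}=0$. Gauge covariance, i.e. the unitary equivalence of $\mathfrak{Op}_A$ and $\mathfrak{Op}_{A'}$ whenever $dA=dA'$, makes both left-hand sides independent of the chosen vector potentials, in accordance with the fact that ${\sf C}^*(\Xi,\omega_B)$ depends only on the cohomology class of $\omega_B$ (Remark \ref{acihush}).

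The main obstacle is the middle step: verifying in detail that the partial Fourier transform intertwines the twisted convolution $\star_{\omega_B}$ on $\Xi$ with the magnetic symbol product, and that the gauge conjugation turns the regular representation $\Pi_x$ into $\mathfrak{Op}_{A_x}$. The delicate bookkeeping is that of the flux cocycle in the exponential coordinates of the nilpotent group, together with checking that a vector potential $A_x$ adapted to the frozen field $B_x$ implements precisely the unitary equivalence predicted by the cohomological triviality of $\omega_B$ along a single free orbit.
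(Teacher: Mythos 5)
Your proposal follows essentially the same route as the paper: realize $(X\rtimes_\th\G,\o_B)$ as a standard tractable twisted groupoid, use the partial Fourier transform and the gauge unitary $U_{A_x}$ (via the Stokes relation between the flux cocycle and the circulations of $A_x$) to identify $\mathfrak{Op}_{A_x}(\si_x)$ with $\Pi_x(f)$ for $f=\mathbf F^{-1}(\si)$, and then invoke Corollary \ref{carefully} and the essential-numerical-range results with ${\sf s}=0$ absorbed into the unions. The step you flag as the "main obstacle" is exactly the explicit computation the paper carries out, so the plan is sound and complete in outline.
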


The remaining part of this subsection is basically dedicated to proving Theorem \ref{specmagnet}. To do this, we will reformulate the
magnetic pseudo-differential operators \eqref{frame} in order to fit a twisted groupoid framework and next we will apply the general spectral results obtained in the previous section.

\smallskip
The groupoid will be {\it the transformation groupoid} $\,\Xi\!:=\!X\!\rtimes_\th\!\G$\,. It coincides with $X\!\times\!\G$ as a topological space, and the algebraic structure is defined through
\begin{equation*}
  {\rm d}(x,a) := x\,,\quad {\rm r}(x,a):=\th_a(x)\,,\quad (x,a)^{-1}:=\big(\th_a(x),a^{-1}\big)\,,\quad
  \big(\th_a(x),b\big)(x,a):=(x,ba)\,.
\end{equation*}
It is easy to see that it is tractable; for amenability use \cite[Ch.II,\,Prop.3.9]{Re} and the amenability of $\G$\,. It is even
a standard groupoid, with main orbit $M=\G$\,. If $\m$ is a Haar measure on $\G$ (nilpotent groups are unimodular), 
then $\{\delta_x \otimes \m \mid x \in X\}$ is a Haar system on $X\!\rtimes_\th\!\G$\,.

\smallskip
We convert now the magnetic field into a groupoid cocycle. Let us set
\begin{equation*}
  \Delta := \big\{\,(t,s)\in[0,1]^2\,\big\vert\, s\le t\,\big\}\,.
\end{equation*}
For $a,b,c\in\G$ one defines the function $\<a,b,c\>:\R^2\rightarrow\G$
\begin{equation*}\label{function}
\<a,b,c\>_{t,s} := \exp\big[\log a+t(\log b-\log a)+s(\log c-\log b)\big]
\end{equation*}
and the set $\,\<\!\<a,b,c\>\!\>:=\<a,b,c\>_\Delta\subset\G$\,. Note that
\begin{equation*}\label{potop}
  \<a,b,c\>_{0,0} = a\,,\quad\<a,b,c\>_{1,0}=b\,,\quad\<a,b,c\>_{1,1}=c\,,
\end{equation*}
\begin{equation}\label{potrop}
  \<a,b,c\>_{t,0} = [a,b]_t\,,\quad\<a,b,c\>_{1,s}=[b,c]_s\,,\quad\<a,b,c\>_{t,t}=[a,c]_t\,,
\end{equation}
so the boundary of $\<\!\<a,b,c\>\!\>$ is composed of the three segments $[[a,b]]$\,, $[[b,c]]$ and $[[c,a]]$\,. One defines {\it the flux of $B$ through $\<\!\<a,b,c\>\!\>$ by}
\begin{equation*}\label{eguation}
  \Gamma_B\<\!\<a,b,c\>\!\> \equiv \int_{\<\!\<a,b,c\>\!\>}\!\!B\ := \int_0^1\!\int_0^t\!\,B\big(\<a,b,c\>_{t,s}\big)\big(\log a-\log b,\log a-\log c\big)dsdt\,.
\end{equation*}
If now $B=dA$ for some $1$-form $A$\,, Stokes' Theorem and \eqref{potrop} shows that for $q,a,b\in\G$ we have
\begin{equation}\label{circuflux}
\Gamma_B\<\!\<a,b,c\>\!\>=\Gamma_A[[a,b]]+\Gamma_A[[b,c]]+\Gamma_A[[c,a]]\,.
\end{equation}
It also follows from Stokes' Theorem and the closedness of $B$ that the formula
\begin{equation}\label{asta}
  \o_{B}\big((\th_a(x),b),(x,a)\big) := e^{i\Gamma_{\!B_x}\!\<\!\<\e,a,ba\>\!\>}
\end{equation}
defines a $2$-cocycle of the transformation groupoid. In \eqref{asta} we used the extension of $B$ to $X$ and the expression $B_x$ introduced in the statement of the Theorem.

\smallskip
As an output, one gets {\it the "magnetic" groupoid $C^*$-algebra} ${\sf C}^*\!\big(X\!\rtimes_\th\!\G,\o_B\big)$ constructed out of the
connected simply connected nilpotent group $\G$\,, a compactification $X$ and the magnetic field $B$ on $\G$ compatible with the
compactification. A Schwartz space on $\G$ is available; just push forward $\S(\g)$ through the exponential map, together with its
Fr\'echet space structure.  One can show the next chain of continuous dense inclusions:
\begin{equation*}\label{effort}
  C_{\rm c}(X\!\times\!\G) \subset C(X) \otimes_{\pi} \S(\G) \subset C(X) \otimes_{\pi} L^1(\G) \subset{\sf C}^*\!\big(X\!\rtimes_\th\!\G,\o_B\big)
\end{equation*}
involving projective tensor products. Recall the identification
$C(X)\cong\A$\,.

\smallskip
For further use, we write down the product in this case:
\begin{equation*}\label{zaproduct}
\begin{aligned}
  \big(f\star_{\o_B}\!g\big)(x,a)& = \int_\G f\big(\th_b(x),ab^{-1}\big)g(x,b)\,\o_B\big((\th_b(x),ab^{-1}),(x,b)\big)db\\
 & = \int_\G f\big(\th_b(x),ab^{-1}\big)g(x,b)\,e^{i\Gamma_{\!B_x}\!\<\!\<\e,b,a\>\!\>}db\,.
\end{aligned}
\end{equation*}

To make our way towards pseudo-differential operators with scalar-valued symbols, one takes advantage of the fact that $\G$ and
$\g$ are diffeomorphic through the exponential map; this allows defining a Fourier transform by
\begin{equation*}\label{clata}
  (\mathfrak F u)(\mathcal X):=\int_{\g}e^{-i\<X\mid\mathcal X\>}u(\exp X)\,dX =\int_\G\!\,e^{-i\<\log a\mid\mathcal X\>} u(a)da\,.
\end{equation*}
Then $\mathfrak F$ can be seen as a contraction $\,\mathfrak F:L^1(\G)\rightarrow L^\infty(\g^\sharp)$\,, as a unitary map
$\,\mathfrak F:L^2(\G)\rightarrow L^2(\g^\sharp)$ (for a convenient normalizations of the measures), or as a linear topological
isomorphism $\,\mathfrak F:\mathcal S(\G)\rightarrow\mathcal S(\g^\sharp)$ with inverse $(\mathfrak F^{-1}\mathfrak
u)(a):=\int_{\g^\sharp}\!e^{i\<\log a\mid\mathcal X\>} \mathfrak u(\mathcal X)\,d\mathcal X$. Then one uses the isomorphism
\begin{equation*}
  \mathbf F := {\rm id} \otimes_{\pi} \mathfrak F:C(X)\otimes_{\pi} \S(\G) \to C(X)\otimes\S(\g^\sharp)
\end{equation*} 
to transport the twisted groupoid algebra structure and then to generate twisted pseudo-differential operators.  By extension, this may be raised to a $C^*$-isomorphism
\begin{equation*}
  {\sf C}^*\!\big(X\!\rtimes_\th\!\G,\o_B\big)\cong{\sf C}^*(\G,X,\th,B)\supset C(X)\otimes\S(\g^\sharp)\,.
\end{equation*} 
Elements of $C(X)\otimes\S(\g^\sharp)$ will be identified to functions $\si:\G\times\g^\sharp\to\mathbb C$\,, to become symbols of the pseudo-differential operators \eqref{frame}.

\smallskip
We are not going to present details about the symbol $C^*$-algebra ${\sf C}^*(\G,X,\th,B)$ (basically an image via a partial Fourier
transform of the twisted groupoid algebra). It will be enough to identify the regular representations, composed with this partial Fourier transform.

\smallskip
We proceed now to the proof of Theorem \ref{specmagnet}.

\begin{proof}
The ${\rm d}$-fiber through the unit $x$ is $\Xi_x=\{x\}\times\G$\,. So
\begin{equation*}
  \iota_x:\H_x = L^2(\{x\}\!\times\!\G;\delta_x\! \otimes\!{\sf m})\to L^2(\G;{\sf m})\,, \quad\big[\iota_x(u)\big](a):=u(x,a)
\end{equation*}
is a Hilbert space isomorphism with inverse acting as $\big[\iota_x^{-1}(v)\big](x,a):=v(a)$\,. We denote by
\begin{equation*}
\mathcal I_x:\mathbb B(\H_x)\to\mathbb B\big[L^2(\G;{\sf m})\big]\,,\quad\mathcal I_x(T):=\iota_xT\iota_x^{-1}
\end{equation*}
the $C^*$-algebraic isomorphism induced by $\iota_x$\,.

\smallskip
The following diagram defines the mapping $\mathfrak{Op}_x$ in terms of the groupoid regular representation\,:
\begin{equation}\label{colifloch}
\begin{diagram}
\node{\quad\quad\quad\quad{\sf C}^*\!\big(X\!\rtimes_\th\!\G,\o_B\big)\supset C(X)\otimes\S(\G)}\arrow{s,r}{\mathbf
  F}\arrow{e,t}{\Pi_{x}}\node{\mathbb B(\H_x)}\arrow{s,r}{\mathcal I_x}\\ \node{\quad\quad\quad\quad{\sf C}^*(\G,X,\th,B)\supset
  C(X)\otimes\S(\g^\sharp)}\arrow{e,t}{\mathfrak{Op}_{x}}\node{\mathbb B[L^2(\G;{\sf m})]}
\end{diagram}
\end{equation}
We compute for $\si\in C(X)\otimes\S(\g^\sharp)$\,, $v\in L^2(\G;{\sf m})$ and $a\in\G$\,:
\begin{equation*}\label{pomelnic}
\begin{aligned}
  \big[\mathfrak{Op}_x(\si)v\big](a) & = \Big(\Pi_x\big[({\rm id}\otimes \mathfrak F^{-1}) \si\big]\big(\iota_x^{-1}v\big)\Big)(x,a)\\
  & = \Big(\big[({\rm id}\otimes\mathfrak F^{-1})\si\big]\star_{\o_B}\!\big(\iota_x^{-1}v\big)\Big)(x,a)\\
  & =\int_\G \big[({\rm id}\otimes\mathfrak F^{-1})\si\big]\big(\th_b(x), ab^{-1}\big)\big(\iota_x^{-1}v\big)(x,b)\, e^{i\Gamma_{\!B_x}\!\<\!\<\e,b,a\>\!\>}db\\
  & = \int_\G\int_{\g^\sharp} e^{i\<\log(ab^{-1})\mid\mathcal X\>}\,e^{i\Gamma_{\!B_x}\!\<\!\<\e,b,a\>\!\>} \si\big(\th_b(x), \mathcal X\big)v(b)db\\
  & = \int_\G\int_{\g^\sharp} e^{i\<\log(ab^{-1})\mid\mathcal X\>}\, e^{i\Gamma_{\!B_x}\!\<\!\<\e,b,a\>\!\>} \si\big(\th_b(x),\mathcal X\big)v(b)db\,.
\end{aligned}
\end{equation*}
Using \eqref{circuflux}\,, this may be written
\begin{equation}\label{pomelnoc}
  e^{i\Gamma_{\!A_x}\[\![\e,a\]\!]} \big[\mathfrak{Op}_x(\si)v\big](a) = \int_\G\int_{\g^\sharp} e^{i\<\log(ab^{-1})\mid\mathcal
    X\>}\,e^{i\Gamma_{\!A_x}\[\![b,a\]\!]}\si\big(\th_b(x),\mathcal X\big)e^{i\Gamma_{\!A_x}\[\![\e,b\]\!]}v(b)db\,.
\end{equation}
For any $x\in X$, the multiplication operator
\begin{equation*}
  U_{A_x}:L^2(\G)\to L^2(\G)\,,\quad\big(U_{A_x} w\big)(c):=e^{i\Gamma_{\!A_x}[\![\e,c]\!]}w(c)
\end{equation*}
is unitary and \eqref{pomelnoc} can be rewritten in the form $U_x\,\mathfrak{Op}_x(\si)=\mathfrak{Op}_{A_x}\!(\si_x)\,U_x$. Recalling diagram \eqref{colifloch}, one finally has
\begin{equation*}\label{fantazia}
  \mathfrak{Op}_{A_x}\!(\si_x) = U_{A_x}\,\mathfrak{Op}_x(\si)\, U_{A_x}^{-1} = \big(U_{A_x}\iota_x\big)\Pi_x\big[\mathbf F^{-1}(\si)\big]\big(U_{A_x}\iota_x\big)^{-1}
\end{equation*}
The conclusion is that {\it our magnetic pseudo-differential operator $\mathfrak{Op}_{A_x}\!(\si_x)$ is unitarily equivalent to }
\begin{equation*}
  H_x := \Pi_x(f)\,,\quad f := \mathbf F^{-1}(\si)\in C(X)\otimes\S(\g) \subset{\sf C}^*\!\big(X\!\rtimes_\th\!\G,\o_B\big)\,.
\end{equation*}
In particular, setting $x:=\e\in\G$\,, we see that $\mathfrak{Op}_{A}(\si)$ is unitarily equivalent to $H_\e:=\Pi_\e(f)$\,, for the same $f$. In its turn, $\Pi_\e(f)$ is
unitarily equivalent to $H_0\!:=\Pi_0(f)$\,, by Remark \ref{idem}\,, since $\G\equiv M$ is the main orbit of our groupoid.

\smallskip
Thus, the general spectral results from subsections \ref{ameny} and \ref{caloriforit} imply our Theorem \ref{specmagnet}. The point ${\sf
  s}=0$ need not be included explicitly in \eqref{fierar} and \eqref{firer}, since it automatically belongs to the right hand
sides. For simplicity, our elements were chosen in the twisted groupoid algebras, and not in the unitalizations.



\end{proof}

\subsection{Partial actions by restrictions and their twisted groupoid algebras}\label{gerfomoni}

We keep the setting and the notations of the preceding subsection. In particular, we are given a continuous action $\th$ extending the left
translations of the connected simply connected nilpotent group $\G$ to an equivariant compactification $X:=\G\sqcup X_\infty$ of $\G$\,, as
well as a magnetic field $B$ on $\G$ such that its components belong to the $C^*$-algebra $\A$\,, i.e. they extend continuously to $X$. Vector potentials $A$ such that $B=dA$ will also be used.

\begin{Assumption}\label{assumption}
We are going to fix a compact subset ${\sf K}\subset\G$ and denote by $\L$ the closure in $\G$ of the open set $\G\!\setminus\!{\sf
  K}$\,. Then $L\!:={\sf L}\sqcup X_\infty$ is a compact subset of $X$. We shall typically denote elements of $\,{\sf L}\subset\G\subset
X$ by the symbols $x,y,z\in X$, $a,b,c\in\G$ and $p,q,r\in{\sf L}$.
\end{Assumption}

Besides \eqref{frame}, we also consider {\it compressed magnetic pseudo-differential operators} acting in $L^2(\L)$. They are defined by the formula:
\begin{equation*}\label{frameme}
  \big[\mathfrak{Op}^{\sf L}_{A}(\si)w\big](p) := \int_{\sf L}\int_{\mathfrak g^\sharp} e^{i\<\log(pq^{-1})\mid\mathcal X\>}e^{i\int_{[[q,p]]}\!A}\,\si\big(s,\mathcal
  X\big)w(q)\,dq\,d\mathcal X,\quad p\in\L\,.
\end{equation*}
The only difference between this formula and Equation \eqref{frame} is that now we restrict $p$ and $q$ to the non-invariant subset $\L$\,.

\begin{Remark}\label{pasaj}
  \normalfont{One identifies $L^2({\sf L})\equiv L^2\big(\L;\m|_{\sf L}\big)$ with a closed subspace of the Hilbert space $L^2(\G)$\,. Setting $J_{\sf L}:L^2({\sf L})\to L^2({\sf G})$ for
    the canonical injection, the adjoint $J_{\sf L}^*\!:L^2({\sf G})\to L^2({\sf L})$ is then the restriction. It follows then directly from definitions that $\mathfrak{Op}^{\sf
      L}_{A}(\si)=J_{\sf L}^*\,\mathfrak{Op}_{A}(\si) J_{\sf L}$\,, hence $\mathfrak{Op}^{\sf L}_{A}(\si)$ is indeed the compression
    of the pseudo-differential operator $\mathfrak{Op}_{A}(\si)$ to $L^2({\sf L})$\,.  }
\end{Remark}

\begin{Theorem}\label{thesame}
The operators $\mathfrak{Op}^{\sf L}_{A}(\si)$ and $\mathfrak{Op}_{A}(\si)$ have the same essential spectra and the same essential numerical range. More explicitly, one has
\begin{equation}\label{fierar}
  {\sf sp}_{\rm ess}\big[\mathfrak{Op}^{\L}_{A}(\si)\big] = {\sf sp}_{\rm ess}\big[\mathfrak{Op}_{A}(\si)\big] = \bigcup_{x\in X}{\sf sp}\big[\mathfrak{Op}_{A_x}\!(\si_x)\big]\,,
\end{equation}
\begin{equation}\label{fifer}
  {\sf nr}_{\rm ess} \big[\mathfrak{Op}^{\L}_{A}(\si)\big] = {\sf nr}_{\rm ess} \big[\mathfrak{Op}_{A}(\si)\big] = \overline{\rm co}
  \Big(\bigcup_{x\in X}{\sf nr}\big[\mathfrak{Op}_{A_x}\!(\si_x)\big]\Big)\,,
\end{equation}
where $\si_x$ and $B_x=dA_x$ have the same meaning as in Theorem \ref{specmagnet}. In addition, $\mathfrak{Op}^{\sf L}_{A}(\si)+{\sf
  s}1$ and $\mathfrak{Op}_{A}(\si)+{\sf s}1$ are simultaneously Fredholm.
\end{Theorem}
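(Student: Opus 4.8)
The plan is to avoid comparing the two operators directly --- there is no useful relative-compactness argument, since $L^2({\sf L})$ and $L^2(\G)$ differ by the infinite-dimensional space $L^2({\sf K})$ --- and instead to realise $\mathfrak{Op}^{\sf L}_{A}(\si)$ as the vector representation of a \emph{non-invariant reduction} of the transformation groupoid whose boundary coincides with that of $\Xi=X\rtimes_\th\G$; the essential data computed through Corollary \ref{carefully} and Theorem \ref{carefree} then depend only on that common boundary. Concretely, I would introduce the reduced groupoid $\Xi_L^L:={\rm d}^{-1}(L)\cap{\rm r}^{-1}(L)$ with unit space $L={\sf L}\sqcup X_\infty$ (the groupoid of the partial action of $\G$ obtained by deleting ${\sf K}$; see \cite{Ab,Ex1,Ex2}). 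For $p\in{\sf L}$ the arrow $(p,a)$ lies in $\Xi_L^L$ exactly when $ap\in{\sf L}$, so all points of ${\sf L}$ are joined by arrows and ${\sf L}$ is a single orbit, free because $ap=p$ forces $a=\e$. As $\G$ is open and dense in $X$ and ${\sf K}$ is compact, ${\sf L}$ is open and dense in $L$, while $X_\infty$ is closed and $\Xi_L^L$-invariant; thus $\Xi_L^L$ is standard in the sense of Definition \ref{lista2}, with main orbit ${\sf L}$. It is tractable: second countability, the Hausdorff property and local compactness are inherited, amenability passes to the reduction \cite{ADR}, and the measures $\lambda^L_x:=\delta_x\otimes\big(\m|_{\{a\in\G\,:\,\th_a(x)\in L\}}\big)$ form a continuous right Haar system --- the only point to verify is weak continuity as $x$ tends to $X_\infty$, which holds because ${\sf K}x^{-1}$ eventually misses any fixed compact set, so $\lambda^L_x$ converges to $\delta_x\otimes\m$. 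Finally $\o_B$ restricts to a continuous $2$-cocycle on $\Xi_L^L$.

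Next I would identify the operators. Since the multiplication, the cocycle and the Haar density of $\Xi_L^L$ are literal restrictions of those of $\Xi$, the vector representation of $\big(\Xi_L^L,\o_B\big)$ on $L^2({\sf L};\m|_{\sf L})$ applied to $f|_{\Xi_L^L}$, with $f:=\mathbf F^{-1}(\si)$, has exactly the kernel of $\Pi_0(f)=H_0$ restricted to ${\sf L}\times{\sf L}$; that is, $J_{\sf L}^*\,\Pi_0(f)\,J_{\sf L}=\Pi_0^{\sf L}\big(f|_{\Xi_L^L}\big)$. Because the gauge unitary $U_{A}$ of the proof of Theorem \ref{specmagnet} is a multiplication operator, it commutes with the compression $J_{\sf L}^*(\cdot)J_{\sf L}$; conjugating the identity $\mathfrak{Op}_{A}(\si)=U_{A}\,H_0\,U_{A}^{-1}$ and using Remark \ref{pasaj} therefore shows that $\mathfrak{Op}^{\sf L}_{A}(\si)$ is unitarily equivalent to $\Pi_0^{\sf L}\big(f|_{\Xi_L^L}\big)$, the vector representation of the standard twisted groupoid $\big(\Xi_L^L,\o_B\big)$.

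The decisive observation is that the two groupoids share the \emph{same} boundary. As $X_\infty$ is $\Xi$-invariant, $\th_a(x)\in X_\infty\subset L$ for every $x\in X_\infty$, so
\begin{equation*}
  \big(\Xi_L^L\big)_{X_\infty}=\big\{(x,a)\mid x\in X_\infty\big\}=\Xi_{X_\infty}\,,
\end{equation*}
with matching cocycles; hence the boundary algebras coincide and $f|_{\Xi_L^L}$ and $f$ restrict to the same element $F_{X_\infty}=f|_{\Xi_{X_\infty}}$ of $\mathscr C_{X_\infty}={\sf C}^*(\Xi_{X_\infty},\o_B)$. Applying Corollary \ref{carefully} to $\Xi$ (main orbit $\G$) and verbatim to $\Xi_L^L$ (main orbit ${\sf L}$), in both cases with $X_\infty$ as the part at infinity, yields
\begin{equation*}
  {\sf sp}_{\rm ess}\big(\mathfrak{Op}_{A}(\si)\big)={\sf sp}\big(F_{X_\infty}\!\mid\!\mathscr C^{\bf m}_{X_\infty}\big)={\sf sp}_{\rm ess}\big(\mathfrak{Op}^{\sf L}_{A}(\si)\big)\,,
\end{equation*}
and Theorem \ref{carefree}, applied the same way, gives the corresponding equality of essential numerical ranges. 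Since adding ${\sf s}1$ merely translates both (still equal) essential spectra by ${\sf s}$, Atkinson's theorem makes $\mathfrak{Op}^{\sf L}_{A}(\si)+{\sf s}1$ and $\mathfrak{Op}_{A}(\si)+{\sf s}1$ simultaneously Fredholm.

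The main obstacle is the second paragraph: one must check rigorously that the non-invariant reduction $\Xi_L^L$ is a genuine tractable groupoid --- above all the continuity of the restricted Haar system at $X_\infty$ --- and that $\mathfrak{Op}^{\sf L}_{A}(\si)$ is \emph{exactly} its twisted vector representation, not merely a compression resembling one. Once these identifications are secured, the coincidence of boundaries forces the equalities, with no relative-compactness argument required.
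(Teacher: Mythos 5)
Your proposal is correct and follows essentially the same route as the paper: you reconstruct Proposition \ref{summarize} (the non-invariant reduction $\Xi(L)\equiv\Xi_L^L$ is a standard tractable groupoid, with the continuity of the restricted Haar system at $X_\infty$ checked by the same compactness argument), identify $\mathfrak{Op}^{\sf L}_{A}(\si)$ with the vector representation of the reduced twisted groupoid, and then exploit the coincidence of the two groupoids' invariant restrictions to $X_\infty$ (and of the cocycles) via Corollary \ref{carefully} and Theorem \ref{carefree} — which is precisely the paper's ``more direct proof''. Nothing essential is missing.
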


\begin{Remark}\label{difersse}
\normalfont{Note that the two operators $\mathfrak{Op}^{\sf L}_{A}(\si)$ and $\mathfrak{Op}_{A}(\si)$ act in two different
  Hilbert spaces. It is not immediately clear how to prove the first equalities in \eqref{fierar} and \eqref{fifer} by some relative
  compactness criterion. Our proof will be to relate the operators $\mathfrak{Op}^{\sf L}_{A}(\si)$ to a groupoid that is a
  non-invariant reduction of the transformation groupoid of the preceding subsection, a result that could be useful elsewhere.}
\end{Remark}

The starting observation is that $\th$ restricts to a {\it partial action} $\th^L\!$ of $\G$ on $L={\sf L}\sqcup X_\infty$
\cite{Ex1,Ex2}. The abstract definition, using the present notations, is that for each $a\in \G$ one has a homeomorphism
\begin{equation*}\label{tuberoza}
  \th^L_{a}:L_{a^{-1}}\!\to L_{a}
\end{equation*}
between two open subsets of $L$ such that $\th^L_{\rm e}={\rm id}_{L}$ and such that $\th^L_{a}\!\circ\th^L_{b}$, defined on the maximal domain
\begin{equation*}\label{trandaflor}
  (\th_{b}^L)^{-1}\big(L_{b}\cap L_{a^{-1}}\big)=\Big\{z\in  L_{b^{-1}}\Big\vert\,\th^L_{b}(z)\in L_{a^{-1}}\Big\}\,,
\end{equation*}
is a restriction of $\th_{ab}^L$\,. It is easy to check that $\th_{a^{-1}}^L\!=\big(\th_{a}^L\big)^{-1}$, where $\beta^{-1}:V\to U$ denotes here the inverse of the partial homeomorphism $\beta:U\to V$.

\smallskip
Specifically, in our case, one sets $\,\th_{a}^L$ to be the restriction of $\th_{a}$ to
\begin{equation*}\label{crinulet}
  L_{a^{-1}} := L\cap\th_{a}^{-1}(L) = \big\{z\in L\big\vert\,\th_{a}(z)\in L\big\} = \big({\sf L}\cap a^{-1}{\sf
    L}\big)\sqcup X_\infty\,,
\end{equation*}
and the axioms are easy to check. Unless $\L=\G$\,, the subset $L$ is not $\th$-invariant, so $\th^L$ {\it will not be a global group action}.

\smallskip
In \cite{Ab}, {\it to any continuous partial action corresponds a locally compact groupoid}. We describe the construction for our
concrete partial action $\th^L$, making certain notational simplifications permitted by the context. One sets
\begin{equation}\label{albastrizta}
  \Xi(L)\equiv L\!\times_{\!(\th)}\!\G := \big\{(z,a)\,\big\vert\,a\in\G\,,\, z\in L\,, \th_{a}(z)\in L\big\}
\end{equation}
(it can also be seen as the disjoint union over $a\in\G$ of the domains $L_{a^{-1}}$). The topology is the restriction of the product topology of $L\times\G$\,. The inversion is defined by
\begin{equation*}\label{panseluta}
(z,a)^{-1}\!:=\big(\th_{a}(z),a^{-1}\big)
\end{equation*}
and the multiplication (only) by
\begin{equation*}\label{violetta}
  \big(\th_{a}(z),b\big)\big(z,a\big):=\big(z,ba\big)\,.
\end{equation*}
Then the source and the range map are given by
\begin{equation*}\label{tufaneaka}
  {\rm d}_L(z,a)=(z,{\sf e})\equiv z\,,\quad {\rm r}_L(z,a)=\big(\th_{a}(z),{\sf e}\big)\equiv\th_{a}(z)\,,
\end{equation*}
so we identify the unit space of the groupoid with $L$\,. To $\L=\G$ corresponds $\Xi\equiv\Xi(X)=X\!\rtimes_\th\!\G$\,, the transformation groupoid defined by the initial global action $\th$\,.

\begin{Proposition}\label{summarize}
Let $(X,\th,\G)$ be a dynamical system, where $X=\G\sqcup X_\infty$ is a Hausdorff second-countable compactification of the connected simply
connected nilpotent group $\,\G$ and the restriction of the action $\th$ of $\,\G$ on itself consists of left translations. Let
$\,\L\subset\G$ be a subset satisfying Assumption \ref{assumption} and $\,L\!:={\sf L}\sqcup X_\infty$\,. The partial transformation groupoid
$\,\Xi(L)$ is a standard groupoid with unit space $L$ and main orbit $\L$\,. It is the (non-invariant) reduction of the transformation groupoid $\,\Xi\equiv\Xi(X)$ to $L$\,.
\end{Proposition}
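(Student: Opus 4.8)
The plan is to recognize $\Xi(L)$ as the (non-invariant) reduction $\Xi(X)_L^L$ of the transformation groupoid $\Xi(X)=X\rtimes_\theta\G$ and then to verify, one by one, the three requirements in the definition of a standard groupoid (Definition \ref{lista2}). Since ${\rm d}(x,a)=x$ and ${\rm r}(x,a)=\theta_a(x)$ in $\Xi(X)$, the defining conditions $z\in L$ and $\theta_a(z)\in L$ in \eqref{albastrizta} say precisely that $(z,a)\in\Xi(X)$ has both source and range in $L$; hence $\Xi(L)=\Xi(X)_L^L$ as a set, with the operations inherited from $\Xi(X)$. For an arbitrary subset $L$ of the unit space such a reduction is automatically a subgroupoid with unit space $L$ (it is closed under the partial multiplication, inversion, and contains the units of $L$), so the algebraic structure and the last assertion of the proposition are immediate; the reduction is genuinely non-invariant because $\L$ is not $\theta$-invariant unless $\L=\G$.

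I would next check tractability. As $L$ is compact (hence closed) in the second countable Hausdorff space $X$ and $\G$ is a second countable Lie group, the product $L\times\G$ is locally compact, Hausdorff and second countable; continuity of the action and closedness of $L$ make $\Xi(L)=\{(z,a)\in L\times\G\mid\theta_a(z)\in L\}$ closed in $L\times\G$, so it inherits all three properties. On each ${\rm d}$-fibre $\Xi(L)_z\cong\{a\in\G\mid\theta_a(z)\in L\}$ I would place the restriction of the Haar measure $\m$ of $\G$; this is exactly the Haar system attached by Abadie to the groupoid of a continuous partial action \cite{Ab}. Amenability is then inherited from $\Xi(X)$, which is amenable because $\G$ is amenable: the reduction of an amenable groupoid to a locally closed subset of its unit space preserves amenability \cite{ADR}.

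It remains to prove topological transitivity with main orbit $\L$ and triviality of the isotropy there. For $p,q\in\L\subset\G$ the element $a:=pq^{-1}\in\G$ gives $\theta_a(q)=p$ with $p,q\in L$, so $(q,a)\in\Xi(L)$ joins $q$ to $p$; conversely $\theta_a(q)=aq\in L$ with $q\in\G$ forces $aq\in\G\cap L=\L$, whence the orbit of every $q\in\L$ is exactly $\L$. Since $\G$ is open in $X$, the orbit $\L=\G\cap L$ is open in $L$, and it is dense because any point of $X_\infty$ is a limit of a net in $\G$ which, converging outside $\G$, must eventually leave the compact set ${\sf K}$, placing its tail in $\G\setminus{\sf K}\subset\L$ and the point itself in $\overline{\L}$. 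Thus $\L$ is a dense open orbit, and the isotropy at $q\in\L$ is $\{(q,a)\mid aq=q\}=\{(q,{\sf e})\}$, which is trivial. Together with tractability this yields, via Definition \ref{lista2}, that $\Xi(L)$ is standard with main orbit $\L$.

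The step I expect to be delicate is the continuity of the Haar system on the non-invariant reduction: for a general closed non-invariant $L$ the fibrewise restrictions of $\m$ need not vary continuously in $z$, and one must exploit the specific geometry here — the invariance of $X_\infty$ and the fact that the excised set $X\setminus L={\rm int}({\sf K})$ is relatively compact inside $\G$, so that no cutting of fibres takes place near infinity — or else simply invoke Abadie's general theorem \cite{Ab} that a continuous partial action always produces a genuine locally compact groupoid equipped with a Haar system.
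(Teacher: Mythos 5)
Your overall architecture matches the paper's: identify $\Xi(L)$ with the non-invariant reduction ${\rm d}^{-1}(L)\cap{\rm r}^{-1}(L)$ of $\Xi(X)=X\rtimes_\th\G$, check that it is locally compact, Hausdorff, second countable and amenable, and read off the orbit structure (dense open orbit ${\sf L}$ with trivial isotropy, the remaining orbits inside $X_\infty$). Those steps are fine as you present them. But the one step you yourself flag as delicate --- the existence of a \emph{continuous} Haar system on the reduction --- is where the paper spends essentially all of its effort, and your proposal does not close it. Your fallback, invoking Abadie's theorem that a continuous partial action always yields a groupoid with a Haar system, does not apply here: the ${\rm d}$-fibres are $\G^L(z)=\{a\in\G\mid\th_a(z)\in L\}$, which is all of $\G$ for $z\in X_\infty$ but is the \emph{closed} set ${\sf L}b^{-1}$ for $z=b\in{\sf L}$. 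The domains of this partial action are therefore closed, not open, so the standard Haar-system construction for partial actions (which needs open domains so that restricting Haar measure varies continuously) is not available. The paper instead quotes Nica's criterion for restricting the Haar system of a transformation groupoid to a non-invariant closed subset: one must verify ($\alpha$) that the restriction of $\m$ to each $\G^L(z)$ has full support, and ($\beta$) that $z\mapsto\int_{\G^L(z)}\psi\,d\m$ is continuous on $L$ for every $\psi\in C_{\rm c}(\G)$.

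Condition ($\alpha$) follows from translation invariance of $\m$ and the fact that ${\sf L}$ is the closure of the open set $\G\setminus{\sf K}$. Condition ($\beta$) is the real content, and it is exactly your heuristic that ``no cutting of fibres takes place near infinity'' made precise: setting ${\sf D}:=\supp\psi$, if $b^{-1}\notin{\sf K}^{-1}{\sf D}$ then ${\sf D}\cap{\sf K}b^{-1}=\emptyset$, hence ${\sf D}\subset{\sf K}^{c}b^{-1}\subset{\sf L}b^{-1}$ and $\int_{{\sf L}b^{-1}}\psi\,d\m=\int_{\G}\psi\,d\m$; since every $z_\infty\in X_\infty$ has a neighbourhood in $X$ whose trace on $\G$ avoids the compact set $({\sf K}^{-1}{\sf D})^{-1}$, continuity at boundary points follows, while continuity at points of ${\sf L}$ is the usual $L^1$-continuity of translations. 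Without this verification (or an equivalent one) your proof of tractability, and hence of standardness, is incomplete; with it inserted, the rest of your argument goes through as written.
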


\begin{proof}
Clearly the space $\Xi(L)$ is locally compact, Hausdorff and second countable.

\smallskip
From \eqref{albastrizta} and the form of the maps ${\rm d,r}:\Xi\to X$ it follows that
\begin{equation*}\label{garoafa}
  \Xi(L) = \big\{\xi\in\Xi\,\big\vert\, {\rm d}(\xi)\,,{\rm r}(\xi)\in L\big\} = {\rm d}^{-1}\big(L\big)\cap {\rm r}^{-1}\big(L\big)\,,
\end{equation*} 
so $\Xi(L)$ is the reduction of $\Xi\equiv\Xi(X)$ to the closed (non-invariant) subset $L$\,. It is an closed subgroupoid of $\Xi$\,,
thus a locally compact groupoid in itself. Its amenability follows from \cite[Ch.II,\,Prop.3.7,\,3.9]{Re}.

\smallskip
The problem of (fibrewise) restricting Haar systems to non-invariant closed subsets is not trivial. In \cite[Sect.1]{Nic2} it has been
solved for the case of transformation groupoids. We write down the output using the present notations.

For any $z\in L$\,, denote by $\lambda_z$ the restriction to $\Xi(L)$ of the product $\delta_z\!\otimes\!\m$\,, as well as {\it the set of $L$-admissible translations}
\begin{equation*}\label{mac}
\G^L(z):=\big\{a\in\G\,\big\vert\,\th_{a}(z)\in L\big\}\,.
\end{equation*}
A necessary and sufficient condition for $\{\lambda_z\!\mid\!z\in L\}$ to be a Haar system for the restricted groupoid is precisely the pair of conditions:
\begin{enumerate}
\item[($\alpha$)] the restriction of the Haar(=Lebesgue) $\m$ to any subset $\G_L(z)$ has full support,
\item[($\beta$)] the mapping $L\ni z\to\!\int_{\G^L\!(z)}\psi(a)d\m(a)\in\mathbb C$ is continuous for each $\psi\in C_{\rm c}(\G)$\,.
\end{enumerate}
But in our case one has 
\begin{equation}\label{according}
\G^L(z) = \G\ \,{\rm if}\ z\in X_\infty\quad \mbox{and} \quad \G^L(b)={\sf L}b^{-1}\ {\rm if}\ z=b\in\L\,.
\end{equation}
Condition ($\alpha$) surely holds for $z\in X_\infty$\,, and it also holds for $z=b\in\L$\,, since $\m$ is invariant under (right)
translations and $L$ was defined to be the closure of the open set $\G\!\setminus\!{\sf K}$ (the Haar measure of any non-void open set is
strictly positive). Condition ($\beta$) amounts to show for every $\psi\in C_{\rm c}(\G)$ the continuity of the function
\begin{equation*}
  \Psi:L\to\mathbb C\,,\quad\Psi(z) :=\int_{\G}\psi(a)da\ \ \mbox{if}\ \ z\in X_\infty\,,\quad\Psi(b)\!:=\!\int_{{\sf L}b^{-1}}\!\psi(a)da\ \ \mbox{if}\ \ z=b\in\L\,.
\end{equation*}
The continuity at points belonging to $\L$ is standard and easy (translations are continuous in $L^1(\G)$)\,. Let us set ${\sf
  D}\!:=\supp(\psi)$\,; it is a compact subset of $\G$\,. To show continuity at a point $z_\infty\in X_\infty$\,, it is enough to find,
for each $\epsilon>0$\,, a neighborhood $V_\epsilon$ of $z_\infty$ such that
\begin{equation}\label{danssa}
  |\Psi(b)-\Psi(z_\infty)|=\Big\vert\int_{{\L}b^{-1}\cap{\sf D}}\psi(a)da-\!\int_{\sf
    D}\psi(a)da\Big\vert\le\epsilon\,,\quad\forall\,b\in V_\epsilon\cap\G\,.
\end{equation}
Assume that $b^{-1}$ does not belong to the compact subset ${\sf K}^{-1}{\sf D}$\,; then ${\sf D}\cap{\sf K}b^{-1}=\emptyset$\,. It follows that
\begin{equation*}
   {\sf D}\subset\big({\sf K}b^{-1}\big)^c={\sf K}^cb^{-1}\subset\overline{{\sf K}^c}b^{-1}={\sf L}b^{-1},
\end{equation*}
so ${\sf L}b^{-1}\!\cap{\sf D}={\sf D}$ and the difference of integrals in \eqref{danssa} is in fact zero. But it is clear that,
given such a compact set ${\sf K}^{-1}{\sf D}$\,, there is a neighborhood $V_\epsilon$ of $z_\infty$ that does not meet it. Thus the restriction problem is solved.

\smallskip
We deal now with the remaining issues. The ${\rm d}_L$-fiber of $z\in L$ is
\begin{equation*}\label{crizantema}
  [\Xi(L)]_z=\big\{(z,a)\,\big\vert\,\th_{a}(z)\in L\big\}=\{z\}\times\G^L(z)\,.
\end{equation*}
The isotropy group is
\begin{equation*}\label{krizantema}
  [\Xi(L)](z)=\big\{(z,a)\,\big\vert\,\th_{a}(z)=z\big\}
\end{equation*} and can be identified with the isotropy group of $z$ with respect to the initial global action. In particular, if $z\equiv p\in {\sf L}$\,,
it is trivial. Two points $z_1$ and $z_2$ are orbit-equivalent if, and only if, both being elements of $L$\,, are on the same $\th$-orbit. In
particular ${\sf L}$ is itself an (open and dense) orbit under the action of the groupoid. The other equivalence classes are the orbits contained in $X_\infty$\,. The groupoid $\Xi(L)$ is standard.
\end{proof}

{\it Proof of Theorem \ref{thesame}.} The proof is based on Proposition \ref{summarize}. Note the decompositions
\begin{equation}\label{writedown}
 \Xi(L) = \Xi({\sf L}) \sqcup \Xi(X_\infty)\,, \quad \Xi(X) = \Xi({\sf
   G}) \sqcup \Xi(X_\infty)\,.
\end{equation}
The second one has been used in the previous subsection to prove Theorem \ref{specmagnet}, in conjunction with the $2$-cocycle \eqref{asta}. The first one and the restricted $2$-cocycle
\begin{equation*}\label{astak}
  \o_{B}^{\sf L} \big((\th^{\sf L}_q(x),p),(x,q)\big) := e^{i\Gamma_{\!B_x}\!\<\!\<\e,q,pq\>\!\>},\quad q,p\in{\sf L}\,, x\in X_\infty
\end{equation*}
could be used similarly to prove the first terms in \eqref{fierar} and \eqref{fifer} are equal to the last ones, respectively. The partial
Fourier transform serves in the same way to relate the regular representations of the twisted groupoid $C^*$-algebra associated to
$\big(\Xi(L),\o_{B}^{\sf L}\big)$ to the compressed magnetic pseudo-differential operators. Recall that the ${\rm d}_{L}$-fibers
$[\Xi(L)]_z=\{z\}\times\G^L(z)$ of $\Xi(L)$ are now of two distinct types, according to \eqref{according}. This explains why, while the
operator $\mathfrak{Op}^{\sf L}_{A}(\si)$ acts in $L^2({\sf L})$\,, "the asymptotic operators" $\mathfrak{Op}_{A_x}\!(\si_x)$ act in $L^2(\G)$\,.

\smallskip
A more direct proof is to note that the invariant restrictions to $X_\infty$ of the two groupoids in \eqref{writedown} coincide, as well as the two cocycles. Then the first equalities in \eqref{fierar} and
\eqref{fifer} follow, respectively, from the first equalities of \eqref{jardiances} and \eqref{jardanche}\,; both involve restrictions of the symbols to the common part $\Xi(X_\infty)$\,, and these
restrictions are the same.

\smallskip
For the Fredholm properties one uses Atkinson's Theorem.

\subsection{Twisted Wiener-Hopf operators on the Heisenberg group}\label{grafoni}

In this subsection $\G$ will be the $3$-dimensional Heisenberg group; it is a connected simply connected $2$-step nilpotent group. As a set $\G:=\R^3$, the multiplication being
\begin{equation*}\label{viitura}
  (a_1,a_2,a_3)(b_1,b_2,b_3) := (a_1+b_1,a_2+b_2,a_3+b_3+a_1b_2)
\end{equation*} 
and the inversion $(a_1,a_2,a_3)^{-1} := (-a_1,-a_2,-a_3+a_1a_2)$\,. The Lebesgue measure $d\m(a)\equiv da$ is a Haar measure\,. Its reduction to the closed submonoid
\begin{equation*}\label{positiv}
  {\sf H} := \R_+^3 = \{h=(h_1,h_2,h_3)\in\G\! \mid\!h_1,h_2,h_3 \ge 0\}
\end{equation*} 
has full support; $L^2({\sf H})$ will be identified to a closed  subspace of the Hilbert space $L^2(\G)$\,. The adjoint $J_{\sf H}^*$ of the canonical inclusion $J_{\sf H}:L^2({\sf H})\to L^2({\sf G})$ is
the restriction map from $L^2({\sf G})$ to $L^2({\sf H})$\,. Here and below we use the fact that ${\sf H}$ is {\it solid}, i.e.\,it coincides with the closure of its interior.

\smallskip
As in the preceding subsection, we fix a magnetic field $B$ and a corresponding vector potential $A$ with $B=dA$\,. For every
$\varphi\in L^1({\sf G})$ one has {\it the twisted (magnetic) convolution operator} in $L^2({\sf G})$ given by
\begin{equation*}\label{convol}
  \big[C_A(\varphi)u\big](a) := \int_\G e^{i\int_{[\![b,a]\!]}A}\varphi(ab^{-1})u(b)db\,.
\end{equation*}


\begin{Definition}\label{musetel}
The compression $W_{\!A}(\varphi):=J_{\sf H}^*\,C_A(\varphi)J_{\sf H}$ to $L^2({\sf H})$ of $C_A(\varphi)$ is called {\rm the magnetic
  Wiener-Hopf operator of symbol $\varphi\in L^1(\G)$} and it has the explicit form
\begin{equation*}\label{WH}
  \big[W_{\!A}(\varphi)w\big](h):=\int_{\sf H}e^{i\int_{[\![k,h]\!]}A}\varphi(hk^{-1})w(k)dk\,,\quad\forall\,w\in L^2({\sf H})\,,\,h\in{\sf H}\,.
\end{equation*}
The $C^*$-subalgebra of $\,\mathbb B\big[L^2({\sf H})\big]$ generated by all these operators is {\rm the magnetic Wiener-Hopf $C^*$-algebra}, denoted by ${\sf W}_{\!A}(\G,{\sf H})$\,.
\end{Definition}

\begin{Remark}\label{gagecov}
\normalfont{If $B=dA=dA'$ is given by two vector potentials, the connection is $A'=A+d\nu$\,, where $\nu$ is a smooth real function
  on $\G$\,. It follows immediately that $W_{\!A'}(\varphi)=e^{-i\nu(\cdot)}W_{\!A}(\varphi)e^{i\nu(\cdot)}.$
  The two Wiener-Hopf operators associated to the same symbol but to two equivalent vector potentials are unitarily equivalent (this is
  the gauge covariance in this setting). So their spectral properties only depend on the magnetic field. It also follows that the two
  $C^*$-algebras ${\sf W}_{\!A}(\G,{\sf H})$ and ${\sf W}_{\!A'}(\G,{\sf H})$ are isomorphic, the isomorphism being unitarily implemented.  }
\end{Remark}

Following \cite{Nic2}, we indicate a compactification of $\sf H$ suited to study these Wiener-Hopf operators. One starts with
\begin{equation*}
  \mathbf X := \{\rho\in L^\infty(\G)\! \mid\, \|\rho\|_\infty\, \le1\}\,.
\end{equation*} 
By the Alaoglu theorem, it is a compact space with respect of the $w^*$-topology ($L^\infty(\G)$ is the dual of the Banach space
$L^1(\G)$)\,. In terms of characteristic functions, one introduces the map
\begin{equation*}
   \chi:{\sf H}\to\mathbf X, \quad\chi(h) := \chi_{h{\sf H}^{-1}}.
\end{equation*}
It is injective, since ${\sf H}$ is solid. The closure of its range $X\!:=\overline{\chi({\sf H})}\subset\mathbf X$ will be the unit space
of a future groupoid.  It is shown in \cite[Prop.\,2.1]{Nic2} that all the elements of $X$ are characteristic functions $\chi_{\sf E}$\,,
where $\sf E$ belongs to some family $\mathscr X$ of solid subsets of $\G$\,. Through the bijection $\mathscr X\ni{\sf E}\to\chi_{\sf E}\in
X$ one transfers to $\mathscr X$ the $w^*$-topology. The (homeomorphic) spaces $X$ and $\mathscr X$ may be seen as
compactifications of $\sf H$\,. Actually, cf. \cite[Sect.\,3]{Nic2}, the compactification is regular, meaning that $\chi(\sf H)$ is open in
its closure $X$. It is also true that, if $a\in\G$ and $\sf E\in\mathscr X$, {\it one has $a\sf E\in\mathscr X$ if and only if  $a\in\sf E^{-1}$}.

\begin{Remark}\label{comeback}
\normalfont{Let us assume that the restriction of the magnetic field $B$ to $\sf H$ admits a continuous extension $\tilde B$ to the
  compactification $\mathscr X$. Thus $\tilde B(h{\sf H}^{-1})=B(h)$ if $h\in\sf H$\,. For any ${\sf E}\in\mathscr X$ we set
\begin{equation*}\label{lainfinit}
  B_{{\sf E}}:{\sf E}^{-1}\to\wedge^2(\G)\,,\quad B_{\sf E}(a) :=\tilde B(a{\sf E})\,,\quad\forall\,a\in\sf E^{-1}.
\end{equation*}
In particular, if $h\in\sf H$ and $a\in(h{\sf H}^{-1})^{-1}={\sf H}h^{-1}$ (meaning that $ah\in\sf H$)\,, then
\begin{equation*}
  B_{h{\sf H}^{-1}}(a)=\tilde B(ah{\sf H}^{-1})=B(ah)\,.
\end{equation*}
So, for every $h\in\sf H$\,, $B_{h{\sf H}^{-1}}$ is the restriction to ${\sf H}h^{-1}\!\subset\G$ of a right translation of $B$\,. Taking
$h=\e$ we see that $B_{\sf H^{-1}}$ is the restriction to $\sf H$ of the magnetic field $B$\,.}
\end{Remark}

\smallskip
We are going to need two particular subsets of $\G\equiv\R^3$\,:
\begin{equation}\label{ahaceia}
  {\sf U} = \R_-\!\times\R_-\!\times\R\quad{\rm and}\quad{\sf V}=\{b\in\R^3\mid b_2\le 0,b_3-b_1b_2\le 0\}\,,
\end{equation}
with group inverses 
\begin{equation*}
  {\sf U}^{-1} = \R_+\!\times\R_+\!\times\R\quad{\rm and}\quad {\sf V}^{-1}=\R\times\R_+\!\times\R_+\,.
\end{equation*}
One also define the operators in $L^2({\sf U})$ and $L^2(\sf V)$\,, respectively, by
\begin{equation}\label{doardansul}
  \big[\pi_{\sf U}(\varphi)w\big](a) := \int_{{\sf U}^{-1}}\!\!e^{i\Gamma_{\!B_{\sf U}}\!\<\!\<\e,b,a\>\!\>}\varphi(ab^{-1})\,w(b)db\,,
\end{equation}
\begin{equation}\label{doardansa}
\big[\pi_{\sf V}(\varphi)w\big](a) := \int_{{\sf V}^{-1}}\!\!e^{i\Gamma_{\!B_{\sf V}}\!\<\!\<\e,b,a\>\!\>}\varphi(ab^{-1})\,w(b)db\,.
\end{equation}

\begin{Theorem}\label{plictisit}
Given $\varphi\in C_{\rm c}(\G)$ and $B=dA$ a magnetic field on $\G$ whose restriction to $\sf H$ can be extended continuously to $\mathscr X$, one has
\begin{equation*}\label{spesentialul}
  {\sf sp}_{\rm ess}\big[W_A(\varphi)\big] = {\sf sp}\big[\pi_{\sf U}(\varphi)\big]\cup{\sf sp}\big[\pi_{\sf V}(\varphi)\big]
\end{equation*}
and
\begin{equation*}\label{nresentialul}
  {\sf nr}_{\rm ess} \big[W_A(\varphi)\big] = \overline{\sf co}\Big({\sf nr} \big[\pi_{\sf U}(\varphi)\big] \cup{\sf nr}\big[\pi_{\sf V}(\varphi)\big]\Big)\,.
\end{equation*}
The magnetic Wiener-Hopf operator $W_A(\varphi)$ is Fredholm if and only if $\pi_{\sf U}(\varphi)$ and $\pi_{\sf V}(\varphi)$ are invertible.
\end{Theorem}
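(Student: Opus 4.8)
The plan is to realize the magnetic Wiener-Hopf $C^*$-algebra as a twisted groupoid algebra and then invoke the decomposition results of Section \ref{sec.three}, exactly as in the two preceding subsections. First I would borrow from \cite{Nic2} the Wiener-Hopf groupoid $\Xi:=\Xi(X)$, the partial transformation groupoid attached to the partial action of $\G$ on the compactification $X=\mathscr X$ of $\sf H$ described above, and equip it with the magnetic $2$-cocycle $\o_B$ defined as in \eqref{asta} (using the continuous extension $\tilde B$ of Remark \ref{comeback}). By the discussion preceding the statement, $\chi({\sf H})\cong{\sf H}$ is an open dense free orbit, so $\Xi$ is a standard tractable groupoid with main orbit $M={\sf H}$; the Haar-system argument needed on this non-invariant reduction is precisely the one supplied in Proposition \ref{summarize} and in \cite[Sect.\,1]{Nic2}. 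Taking $f(x,a):=\varphi(a)$, constant in the unit variable, the vector representation produces $H_0=\Pi_0(f)$, which up to the unitary gauge transformation $U_A$ of the proof of Theorem \ref{specmagnet} equals $W_{\!A}(\varphi)$ on $L^2(M;\mu)\cong L^2({\sf H})$.

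Next I would import the quasi-orbit analysis of \cite[Sect.\,3]{Nic2}. Although $X_\infty=X\!\setminus\!\chi({\sf H})$ carries several orbits arranged in layers, only two quasi-orbits $\mathcal Q_{\sf U}$ and $\mathcal Q_{\sf V}$ are needed to cover $X_\infty$, generated by the units $x_{\sf U}=\chi_{\sf U}$ and $x_{\sf V}=\chi_{\sf V}$ associated with the solid sets ${\sf U}$ and ${\sf V}$ of \eqref{ahaceia}. Applying Corollary \ref{carefully} with $J=\{{\sf U},{\sf V}\}$ then gives ${\sf sp}_{\rm ess}(H_0)={\sf sp}(H_{x_{\sf U}})\cup{\sf sp}(H_{x_{\sf V}})$ and ${\sf nr}_{\rm ess}(H_0)=\overline{\rm co}\big[{\sf nr}(H_{x_{\sf U}})\cup{\sf nr}(H_{x_{\sf V}})\big]$, the point ${\sf s}=0$ being automatically contained in the right-hand sides since $f\in\mathscr C$ is not adjoined a scalar; the Fredholm equivalence is the corresponding specialization of Theorem \ref{carefuly}(iii) through Atkinson's Theorem.

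It remains to compute the two boundary representations explicitly. For a unit $z=\chi_{\sf E}$ the $\d$-fibre is $\Xi_z=\{z\}\times\G^L(z)=\{z\}\times{\sf E}^{-1}$, using the characterization $a{\sf E}\in\mathscr X\Leftrightarrow a\in{\sf E}^{-1}$ recalled before the statement; hence $\Pi_{x_{\sf E}}(f)$ acts on $L^2({\sf E}^{-1})$. A direct computation with the twisted convolution, identical in form to the one carried out in the proof of Theorem \ref{specmagnet} but with the integration domain now restricted to ${\sf E}^{-1}$, yields for $w\in L^2({\sf E}^{-1})$ and $a\in{\sf E}^{-1}$ the expression $\int_{{\sf E}^{-1}}e^{i\Gamma_{\!B_{\sf E}}\<\!\<\e,b,a\>\!\>}\varphi(ab^{-1})w(b)\,db$, where $B_{\sf E}$ is the asymptotic magnetic field of Remark \ref{comeback}. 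For ${\sf E}={\sf U}$ and ${\sf E}={\sf V}$ this is exactly $\pi_{\sf U}(\varphi)$ and $\pi_{\sf V}(\varphi)$ of \eqref{doardansul} and \eqref{doardansa}; note that these are written directly in the flux form $e^{i\Gamma_{\!B_{\sf E}}\<\!\<\e,b,a\>\!\>}$, so no further gauge conversion is needed. Substituting the identifications $H_{x_{\sf U}}=\pi_{\sf U}(\varphi)$ and $H_{x_{\sf V}}=\pi_{\sf V}(\varphi)$ into the displayed decompositions completes the proof.

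I expect the main obstacle to be the faithful transcription of the quasi-orbit structure of $\mathscr X$ from \cite{Nic2}---in particular the verification that $\mathcal Q_{\sf U}$ and $\mathcal Q_{\sf V}$ genuinely cover the layered boundary---together with the careful matching of the restricted cocycle $\o_B$ at these boundary units with the flux $\Gamma_{\!B_{\sf E}}$ appearing in \eqref{doardansul}--\eqref{doardansa}. Once these two points are secured, everything else is a formal consequence of Corollary \ref{carefully} and of the representation computation already performed for Theorem \ref{specmagnet}.
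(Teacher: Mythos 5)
Your proposal is correct and follows essentially the same route as the paper: the Wiener--Hopf groupoid of \cite{Nic2} over the compactification $\mathscr X$ with main orbit $\sf H$, the magnetic $2$-cocycle $\o_B^{\sf H}$ built from the extension $\tilde B$, the identification of the gauge-transformed vector representation with $W_A(\varphi)$, the covering $\mathscr X\!\setminus\!{\sf H}=\overline{\mathscr X_{1,2}}\cup\overline{\mathscr X_{2,3}}$ with generic points $\sf U$ and $\sf V$, and the explicit computation of the boundary regular representations as $\pi_{\sf U}(\varphi)$ and $\pi_{\sf V}(\varphi)$, all fed into Corollary \ref{carefully} and Theorem \ref{carefree}. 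The two points you flag as remaining obstacles (the quasi-orbit covering and the matching of the restricted cocycle with the flux $\Gamma_{B_{\sf E}}$) are exactly the content of \eqref{covering}, Lemma \ref{cirnat} and the $\tilde\Pi_{\sf E}$ computation in the paper, so nothing is missing.
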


To prove this result, we use the groupoid
\begin{equation*}\label{vicnis}
  \Xi := \big\{({\sf E},a)\in\mathscr X\!\times\!\G\!\mid\!a\in\sf E^{-1}\big\}
\end{equation*}
with the restriction of the product topology and the structure maps
\begin{equation*}\label{structar}
  (a{\sf E},b)({\sf E},a) := ({\sf E},ba)\,,\quad({\sf E},a)^{-1}:=\big(a{\sf E},a^{-1}\big)\,,
\end{equation*}
\begin{equation*}\label{structur}
  {\rm d}({\sf E},a) := {\sf E}\,,\quad{\rm r}({\sf E},a):=a{\sf E}\,.
\end{equation*}
Note that 
\begin{equation}\label{fibrika}
  \Xi_{\sf E} := {\rm d}^{-1}(\{{\sf E}\})=\big\{({\sf E},a)\!\mid\!a\in\sf E^{-1}\big\}\cong{\sf E}^{-1}.
\end{equation}
The properties of this groupoid have been investigated in \cite{Nic2} (see also \cite{MuR,Nic1}). Restricting the Haar measure $\m$ to the
solid subset $\sf E^{-1}\!\subset\G$ and then transferring it to the fiber $\Xi_{\sf E}$\,, one gets a right Haar system. In our terminology, {\it the groupoid $\Xi$ is standard, with unit space
  $X\cong\mathscr X$ and main orbit} ${\sf H}\equiv\chi({\sf H})=\big\{\chi_{h{\sf H}^{-1}}\!\mid\!h\in{\sf H}\big\}$\,.
Actually, the map ${\sf H}\times{\sf H}\ni(h,k)\to\big(k{\sf H}^{-1}\!,hk^{-1}\big)\in\Xi|_{\sf H}$ is an isomorphism between the
pair groupoid and the (invariant) restriction of $\Xi$ to its main orbit.


\smallskip
The construction of the groupoid is quite general. Following \cite[Sect.4]{Nic2}, we indicate now the unit space and its
quasi-orbit structure for the case of the $3$-dimensional Heisenberg group. First of all, if $h:=(h_1,h_2,h_3)\in\R^3\equiv\G$\,, then
\begin{equation*}
  h{\sf H}^{-1}=\big\{a=(a_1,a_2,a_3)\,\big\vert\, a_1\le h_1,a_2\le h_2,a_3+(h_2-a_2)a_1\le h_3\big\}\,.
\end{equation*}
We recall that $\{h{\sf H}^{-1}\!\mid h\in{\sf H}\equiv\R_+^3\}$ has been identified to $\sf H$\,. To describe the other sets $\sf
E\in\mathscr X$ that are units of the groupoid, for any $h=(h_1,h_2,h_3)\in\mathbb R^3_+$ we use the following notations:
\begin{equation*}
   {\sf S}_{h_1,h_2,\cdot} := \{a\in\R^3\mid a_1\le h_1,a_2\le h_2\}\,,
\end{equation*}
\begin{equation*}
   {\sf S}_{\cdot,h_2,h_3} := \{a\in\R^3\mid a_2\le h_2,a_3+(h_2-a_2)a_1\le h_3\}\,,
\end{equation*}
\begin{equation*}
  {\sf S}_{h_1,\cdot,\cdot} := \{a\in\R^3\mid a_1\le h_1\}\,,\quad{\sf S}_{\cdot,h_2,\cdot} := \{a\in\R^3\mid a_2\le h_2\}\,.
\end{equation*}
Then 
\begin{equation*}
   \mathscr X = \mathscr X_{1,2,3}\sqcup\mathscr X_{1,2}\sqcup\mathscr X_{2,3}\sqcup\mathscr X_{1}\sqcup\mathscr X_{2}\sqcup\mathscr X_{0}
\end{equation*}
is the disjoint union of six orbits, given explicitly by
\begin{equation*}
   \mathscr X_{1,2,3} := \{h{\sf H}^{-1}\!\mid h\in\R_+^3\}\,,\quad\mathscr X_{0}:=\{\R^3\}\,,
\end{equation*}
\begin{equation*}
  \mathscr X_{1,2} := \{{\sf S}_{h_1,h_2,\cdot}\!\mid h_1\ge 0,h_2\ge 0\}\,,\quad \mathscr X_{2,3} := \{{\sf S}_{\cdot,h_2,h_3}\!\mid h_2\ge 0,h_3\ge 0\}\,,
\end{equation*}
\begin{equation*}
 \mathscr X_{1} := \{{\sf S}_{h_1,\cdot,\cdot}\!\mid h_1\ge 0\}\,,\quad \mathscr X_{2}:=\{{\sf S}_{\cdot,h_2,\cdot}\!\mid h_2\ge 0\}\,.
\end{equation*}
The quasi-orbits are given by closures in the weak$^*$-topology (transported from $X\subset L^\infty(\G)$ to $\mathscr X$)\,:
\begin{equation*}
\overline{\mathscr X_{1,2,3}} = \mathscr X,\quad\overline{\mathscr X_{0}}=\mathscr X_{0}\,,
\end{equation*}
\begin{equation*}
  \overline{\mathscr X_{1,2}} = \mathscr X_{1,2}\sqcup\mathscr X_{1}\sqcup\mathscr X_{2}\sqcup\mathscr X_{0}\,,\quad\overline{\mathscr X_{2,3}}=\mathscr
  X_{2,3}\sqcup\mathscr X_{1}\sqcup\mathscr X_{2}\sqcup\mathscr X_{0}\,,
\end{equation*}
\begin{equation*}
  \overline{\mathscr X_{1}} = \mathscr X_{1}\sqcup\mathscr X_{0}\,,\quad\overline{\mathscr X_{2}}=\mathscr X_{2}\sqcup\mathscr X_{0}\,.
\end{equation*}
Note that 
\begin{equation}\label{covering}
 \mathscr X\!\setminus\!{\sf H}=\overline{\mathscr X_{1,2}}\cup\overline{\mathscr X_{2,3}}\,.
\end{equation}


\begin{Lemma}\label{cirnat}
The formula
\begin{equation*}\label{salam}
  \o_{B}^{\sf H}\big((a{\sf E},b),({\sf E},a)\big) :=e^{i\Gamma_{\!B_{\sf E}}\<\!\<\e,a,ba\>\!\>},\quad {\sf E}\in\mathscr X,\,a,ba\in{\sf E}^{-1}
\end{equation*}
defines a $2$-cocycle of the groupoid $\Xi$\,.
\end{Lemma}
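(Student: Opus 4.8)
The plan is to check the two conditions of Definition \ref{asteluza}, namely the normalization \eqref{normaliz} and the cocycle relation \eqref{doicocic}, together with continuity, following verbatim the argument that turns \eqref{asta} into a cocycle of the transformation groupoid. This is legitimate because $\Xi$ is exactly the groupoid attached (as in Subsection \ref{gerfomoni} and \cite{Ab}) to the partial left-translation action ${\sf E}\mapsto a{\sf E}$ of $\G$ on $\mathscr X$, defined precisely when $a\in{\sf E}^{-1}$: the range $a{\sf E}$ here plays the role of $\th_a(x)$ and the base ${\sf E}$ that of $x$, while $B_{\sf E}$ obeys the same covariance $B_{a{\sf E}}(d)=B_{\sf E}(da)$ recorded in Remark \ref{comeback}. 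Continuity of $\o_B^{\sf H}$ is then immediate, since $\tilde B$ is continuous on $\mathscr X$, the log-affine parametrization of the triangle $\<\!\<\e,a,ba\>\!\>$ is smooth in the group variables, and the flux is an integral over the fixed simplex $\Delta$ of a jointly continuous integrand. Normalization is a degeneracy check: for a unit $x=({\sf E},\e)$ one finds $\o_B^{\sf H}\big(({\sf E},a),({\sf E},\e)\big)=e^{i\Gamma_{B_{\sf E}}\<\!\<\e,\e,a\>\!\>}$ and $\o_B^{\sf H}\big((a{\sf E},\e),({\sf E},a)\big)=e^{i\Gamma_{B_{\sf E}}\<\!\<\e,a,a\>\!\>}$, and in each case two vertices of the triangle coincide, so its flux vanishes and the value is $1$, which is exactly \eqref{normaliz}.

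For the cocycle relation I would take a composable triple $\zeta=({\sf E},a)$, $\eta=(a{\sf E},b)$, $\xi=(ba{\sf E},c)$, compute the products $\xi\eta=(a{\sf E},cb)$ and $\eta\zeta=({\sf E},ba)$, and read off the four factors of \eqref{doicocic}. Writing each as $e^{i\Gamma(\cdots)}$ and comparing exponents, the relation \eqref{doicocic} becomes the additive flux identity
\begin{equation*}
  \Gamma_{B_{a{\sf E}}}\<\!\<\e,b,cb\>\!\>+\Gamma_{B_{\sf E}}\<\!\<\e,a,cba\>\!\>
  =\Gamma_{B_{\sf E}}\<\!\<\e,a,ba\>\!\>+\Gamma_{B_{\sf E}}\<\!\<\e,ba,cba\>\!\>\,.
\end{equation*}
The three triangles carrying $B_{\sf E}$ are the three faces through $\e$ of the log-affine tetrahedron with vertices $\e,a,ba,cba$; feeding each of them through Stokes' formula \eqref{circuflux} (with a potential $A_{\sf E}$ for $B_{\sf E}$ on ${\sf E}^{-1}$) and cancelling the circulations along the shared interior edges, the three $B_{\sf E}$-terms collapse to $\Gamma_{B_{\sf E}}\<\!\<a,ba,cba\>\!\>$, the flux through the remaining face. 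Thus the whole relation reduces to
\begin{equation*}
  \Gamma_{B_{a{\sf E}}}\<\!\<\e,b,cb\>\!\>=\Gamma_{B_{\sf E}}\<\!\<a,ba,cba\>\!\>\,.
\end{equation*}

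The main obstacle is exactly this last equality, which is the place where the ${\sf E}$-dependence of the field interacts with the non-abelian group law. It asserts that the flux of the translated field $B_{a{\sf E}}$ through the triangle based at $\e$ equals the flux of $B_{\sf E}$ through the translated log-affine triangle $\<\!\<a,ba,cba\>\!\>$. Note that the right translate $R_a\<\!\<\e,b,cb\>\!\>$ and the log-affine triangle $\<\!\<a,ba,cba\>\!\>$ share their three vertices but \emph{not} their interiors, so this is not a pointwise change of variables, and it is precisely this point that the verification of \eqref{asta} also has to overcome. I would settle it as there, by inserting the covariance $B_{a{\sf E}}(d)=B_{\sf E}(da)$ of Remark \ref{comeback} into the flux integral and using the closedness of $B$ (so that the flux of the exact form $B_{\sf E}=dA_{\sf E}$ is controlled by its bounding cycle); along the way one checks, using the solidity of the sets in $\mathscr X$, that all the triangles involved remain inside the domain ${\sf E}^{-1}$ on which $B_{\sf E}$ is defined. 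Once this equality is in hand, the cocycle relation follows, and together with the normalization and continuity already established, $\o_B^{\sf H}$ is a $2$-cocycle of $\Xi$.
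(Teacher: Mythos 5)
Your proposal follows the paper's own proof essentially verbatim: the same composable triple $\zeta=({\sf E},a)$, $\eta=(a{\sf E},b)$, $\xi=(ba{\sf E},c)$, the same reduction of \eqref{doicocic} via Stokes' theorem for the closed form $B_{\sf E}$ to the single relation $\Gamma_{\!B_{a{\sf E}}}\<\!\<\e,b,cb\>\!\>=\Gamma_{\!B_{\sf E}}\<\!\<a,ba,cba\>\!\>$, the same degenerate-triangle verification of \eqref{normaliz}, and the same appeal to the continuity of $\tilde B$ on $\mathscr X$. The only difference is that you explicitly isolate, as the main obstacle, the displayed relation that the paper merely asserts alongside its invocation of Stokes' theorem, so your write-up is, if anything, slightly more candid at the one genuinely delicate point.
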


\begin{proof}
To check the $2$-cocycle identity, one considers three elements of the groupoid
\begin{equation*}
  \xi := (ba{\sf E},c)\,,\quad\eta := (a{\sf E},b)\,,\quad\zeta:=({\sf E},a)\,,\quad{\rm with}\quad a,ba,cba\in\sf E^{-1}.
\end{equation*}
Then
\begin{equation*}
\begin{aligned}
  \o_{B}^{\sf H}(\xi,\eta\big)\o_{B}^{\sf H}(\xi\eta,\zeta) &=\o_{B}^{\sf H}\big((ba{\sf E},c),(a{\sf E},b)\big)\o_{B}^{\sf
    H}\big((a{\sf E},cb),({\sf E},a)\big)\\ &=e^{i\Gamma_{\!B_{a\sf E}}\<\!\<\e,b,cb\>\!\>}e^{i\Gamma_{\!B_{\sf E}}\<\!\<\e,a,cba\>\!\>},
\end{aligned}
\end{equation*}
while
\begin{equation*}
\begin{aligned}
  \o_{B}^{\sf H}(\xi,\eta\zeta)\o_{B}^{\sf H}(\eta,\zeta\big) &=\o_{B}^{\sf H}\big((ba{\sf E},c),({\sf E},ba)\big)\o_{B}^{\sf
    H}\big((a{\sf E},b),({\sf E},a)\big)\\
  &=e^{i\Gamma_{\!B_{\sf E}}\<\!\<\e,ba,cba\>\!\>}e^{i\Gamma_{\!B_{\sf E}}\<\!\<\e,a,ba\>\!\>}.
\end{aligned}
\end{equation*}
The relation
\begin{equation*}
  e^{i\Gamma_{\!B_{a\sf E}}\<\!\<\e,b,cb\>\!\>} = e^{i\Gamma_{\!B_{\sf E}}\<\!\<a,ba,cba\>\!\>}
\end{equation*}
and Stokes Theorem for the closed $2$-form $B_{\sf E}$ show that the two expressions are equal.

\smallskip
Normalization is easy: the units of the groupoid are of the form $\sf E\in\mathscr X$, $\chi_{\sf E}\in X$ or $({\sf E},\e)\in\Xi^{(0)}$, depending on the interpretation. One has
\begin{equation*}
  \o_{B}^{\sf H}\big(({\sf E},b),({\sf E},\e)\big) = e^{i\Gamma_{\!B_{\sf E}}\<\!\<\e,\e,b\>\!\>} = 1\,,\quad{\rm if}\quad b\in\sf E^{-1},
\end{equation*}
\begin{equation*}
\o_{B}^{\sf H}\big((a{\sf E},\e),({\sf E},a)\big) = e^{i\Gamma_{\!B_{\sf E}}\<\!\<\e,a,a\>\!\>}=1\,,\quad{\rm if}\quad a\in\sf E^{-1}.
\end{equation*}

The continuity of the $2$-cocycle follows straightforwardly from the continuity of $\tilde B$ on $\mathscr X$.
\end{proof}

Now we compute an adapted version of the regular representations. Let ${\sf E}\in\mathscr X$. By \eqref{fibrika} one has the unitary map
\begin{equation*}\label{fibruka}
  \iota_{\sf E}:\H_{\sf E} := L^2(\Xi_{\sf E})\to L^2({\sf E}^{-1})\,,\quad\big[\iota_{\sf E}(v)\big](a) := v({\sf
    E},a)\,,\quad\forall\,a\in{\sf E}^{-1},
\end{equation*}
inducing a unitary equivalence
\begin{equation*}\label{unitec}
  \mathcal I_{\sf E} : \mathbb B(\H_{\sf E})\to\mathbb B\big[L^2({\sf  E}^{-1})\big]\,,\quad \mathcal I_{\sf E}(S) := \iota_{\sf  E}S\iota_{\sf E}^{-1}.
\end{equation*}
Denoting by $\Pi_{\sf E}$ the regular representation attached to the unit $\sf E$\,, we are interested in
\begin{equation*}\label{interes}
  \tilde\Pi_{\sf E} := \mathcal I_{\sf E}\circ\Pi_{\sf E} : {\sf C}^*\big(\Xi,\o_B^{\sf H}\big) \to\mathbb B\big[L^2({\sf E}^{-1})\big]\,.
\end{equation*}
One computes for $f\in C_{\rm c}(\Xi)$\,, $w\in L^2({\sf E}^{-1})$ and $a\in{\sf E}^{-1}$
\begin{equation*}
\begin{aligned}
 \big[\tilde\Pi_{\sf E}(f)w\big](a)& =\big[\iota_{\sf E}\Pi_{\sf  E}(f)\iota_{\sf E}^{-1}w\big](a) = \big[\Pi_{\sf E}(f)\iota_{\sf E}^{-1}w\big]({\sf E},a)\\
 &= \Big[f\star_{\o_B^{\sf H}}\!(\iota_{\sf E}^{-1}w)\Big]({\sf E},a)\\
  & = \int_{\Xi_{\sf E}}\!f(b{\sf E},ab^{-1})\,(\iota_{\sf E}^{-1}w)({\sf E},b)\,\o_B^{\sf H}\big((b{\sf E},ab^{-1}),({\sf E},b)\big)d\lambda_{\Xi_{\sf E}}({\sf E},b)\\
 & = \int_{\sf E^{-1}}\!e^{i\Gamma_{\!B_{\sf E}}\<\!\<\e,b,a\>\!\>}f(b{\sf E},ab^{-1})\,w(b)db\,.
\end{aligned}
\end{equation*}
It is possible, but not necessary, to write all these operators in terms of vector potentials, up to unitary equivalence.

\begin{Remark}\label{faceceva}
{\rm There is an obvious injection 
\begin{equation*}\label{lainjection}
C_{\rm c}(\G)\ni\varphi\to f_\varphi\in C_{\rm c}(\Xi)\,,\quad f_\varphi({\sf E},a):=\varphi(a)\,.
\end{equation*}
We are only going to consider the operators acting in $L^2({\sf E}^{-1})$ and given by
\begin{equation}\label{doardansii}
\big[\pi_{\sf E}(\varphi)w\big](a) := \big[\tilde\Pi_{\sf E}(f_\varphi)w\big](a)=\int_{\sf E^{-1}}\!e^{i\Gamma_{\!B_{\sf E}}\<\!\<\e,b,a\>\!\>}\varphi(ab^{-1})\,w(b)db\,.
\end{equation}}
\end{Remark}

We are mainly interested in three particular cases. First of all, notice that the subsets introduced in \eqref{ahaceia} may be
identified as ${\sf U}={\sf S}_{0,0,\cdot}$ and ${\sf V}={\sf S}_{\cdot,0,0}$\,. For these two cases, the operators $\pi_{\sf
  E}(\varphi)$ are precisely those given in \eqref{doardansul} and \eqref{doardansa}.  Let us also set $\sf E\!:=\sf H^{-1}$ in
\eqref{doardansii}. By Remark \ref{comeback}, for $\varphi\in C_{\rm  c}(\G)$\,, $w\in L^2(\sf H)$ and $h\in\sf H$ we have
\begin{equation}\label{lamurit}
\big[\pi_{\sf H^{-1}}(\varphi)w\big](h) = \int_{\sf H}\!e^{i\Gamma_{\!B}\<\!\<\e,k,h\>\!\>}\varphi(hk^{-1})\,w(k)dk\,.
\end{equation}
Since $B=dA$ one has, by Stokes' Theorem,
\begin{equation*}
\Gamma_{\!B}\<\!\<\e,k,h\>\!\>=\Gamma_{\!A}[\![\e,k]\!]+\Gamma_{\!A}[\![k,h]\!]+\Gamma_{\!A}[\![h,\e]\!]
\end{equation*}
and \eqref{lamurit} becomes
\begin{equation*}\label{lamurita}
e^{i\Gamma_{\!A}[\![\e,h]\!]}\big[\pi_{\sf  H^{-1}}(\varphi)w\big](h)=\int_{\sf H}\!e^{i\Gamma_{\!A}[\![k,h]\!]}\varphi(hk^{-1})\,e^{i\Gamma_{\!A}[\![\e,k]\!]}w(k)dk\,.
\end{equation*}
Recalling the definition \ref{musetel} of the Wiener-Hopf operators and since the operator $w\to e^{i\Gamma_{\!A}[\![\e,\cdot]\!]}w$ is
unitary in $L^2(\sf H)$\,, {\it we get the unitary equivalence of the operators $\pi_{\sf H^{-1}}(\varphi)$ and $W_{\!A}(\varphi)$}\,.

\smallskip
By all these preparations, by the covering \eqref{covering} of the boundary of the compactification by two quasi-orbits and the fact that
${\sf U}\in\mathscr X_{1,2}\subset\overline{\mathscr X_{1,2}}$ and ${\sf V}\in\mathscr X_{2,3}\subset\overline{\mathscr X_{2,3}}$\,, our
Theorem \ref{plictisit} follows from the results of subsections \ref{ameny} and \ref{caloriforit}.

\smallskip
The fact that the quasi-orbit structure of the unit space of the groupoid is known and simple, allowed us to write the results in terms
just of a pair of "asymptotic operators", instead of using the entire family $\{\pi_{\sf E}(\varphi)\mid {\sf E}\in\mathscr X\!\setminus\!{\sf H}\}$\,.

\section{Localization and non-propagation properties}\label{sec.five}

\subsection{The abstract result}\label{homeny}

Assuming that $\Xi$ is a standard groupoid with main orbit $M$ and $2$-cocycle $\o$\,, we pick a normal element $F\in\,{\sf
  C}^*(\Xi,\o)^{\bf M}$. Its image $H_0:=\Pi_0^{\bf M}(F)$ in the vector representation is an operator in $\H_0:= L^2(M,\mu)$\,. By
$\textbf{1}_V$ we denote the charcteristic function of the set $V\subset M$, as well as the corresponding multiplication operator in $\H_0$\,.

\begin{Theorem}\label{main}
Let $F\in\,{\sf C}^*(\Xi,\o)^{\bf M}$ be normal and $\mathcal Q\subset X_\infty:=X\!\setminus\!M$ a quasi-orbit. Let $\kappa\in C_0(\mathbb
R)$ be a real function with support disjoint from the spectrum of the restriction $F_\mathcal Q\in{\sf C}^*(\Xi_\mathcal Q,\o_\mathcal Q)^{\bf M}$\,.

\begin{enumerate}
\item[(i)] For every $\epsilon>0$ there is a neighborhood $W$ of $\mathcal Q$ in $X$ such that, setting $W_0:=W\cap M$, one has
\begin{equation*}\label{traznaie}
\|{\bf 1}_{W_0}\kappa(H_0)\|_{\mathbb B(\H_0)}\,\le\epsilon\,.
\end{equation*}
\item[(ii)] Suppose that $F$ is self-adjoint. Then for all $t\in\R$ and $u\in\H_0$, one also has
\begin{equation}\label{traznea}
\|{\bf 1}_{W_0}e^{itH_0}\kappa(H_0)u\|_{\H_0}\,\le\epsilon\|u\|_{\H_0}.
\end{equation}
\end{enumerate}
\end{Theorem}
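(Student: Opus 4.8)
The plan is to reduce the whole statement to a single localization estimate for the element $\kappa(F)$ of the multiplier algebra, and then to extract that estimate from the $C_0(X)$-structure of the twisted groupoid algebra. Throughout write $\mathscr C:={\sf C}^*(\Xi,\o)$, so that $H_0=\Pi_0^{\bf M}(F)$ and, since $\Pi_0^{\bf M}$ is a unital $^*$-morphism commuting with the continuous functional calculus of the normal element $F$, one has $\kappa(H_0)=\Pi_0^{\bf M}(\kappa(F))$. First I would truncate: as ${\sf sp}(H_0)\subset{\sf sp}(F)$ is compact, replacing $\kappa$ by $\kappa\chi$ with $\chi\in C_c(\mathbb R)$ equal to $1$ on ${\sf sp}(F)$ changes neither $\kappa(H_0)$ nor the support hypothesis, so I may assume $\kappa\in C_c(\mathbb R)$. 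The hypothesis together with $\rho_\mathcal Q^{\bf M}(F)=F_\mathcal Q$ gives $\rho_\mathcal Q^{\bf M}(\kappa(F))=\kappa(F_\mathcal Q)=0$ (equivalently $\Pi_x(\kappa(F))=0$ for every $x\in\mathcal Q$, via $\Pi_x=\Pi_{\mathcal Q,x}\circ\rho_\mathcal Q$ of Remark \ref{coerenta}). For part (ii) I would pick a real $\tilde\kappa\in C_c(\mathbb R)$ with $\tilde\kappa\equiv1$ on ${\rm supp}\,\kappa$ and ${\rm supp}\,\tilde\kappa$ still disjoint from the compact set ${\sf sp}(F_\mathcal Q)$. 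Then $\kappa(H_0)=\tilde\kappa(H_0)\kappa(H_0)$, and since $F$ is self-adjoint $\tilde\kappa(H_0)$ commutes with the unitaries $e^{itH_0}$, so ${\bf 1}_{W_0}e^{itH_0}\kappa(H_0)u={\bf 1}_{W_0}\tilde\kappa(H_0)\,e^{itH_0}\kappa(H_0)u$ and $\|{\bf 1}_{W_0}e^{itH_0}\kappa(H_0)u\|\le\|{\bf 1}_{W_0}\tilde\kappa(H_0)\|\,\|\kappa\|_\infty\,\|u\|$ uniformly in $t$. Thus (ii) follows from (i) applied to $\tilde\kappa$ with $\epsilon$ replaced by $\epsilon/\|\kappa\|_\infty$, and crucially the neighborhood $W$ produced is independent of $t$.

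Next I would set up the cut-off reduction of (i). Given an open neighborhood $V$ of $\mathcal Q$, choose $\psi\in C(X)$ with $0\le\psi\le1$ and $\psi\equiv1$ on $V$, and put $W:=V$, $W_0:=V\cap M$. In the vector representation $\Pi_0^{\bf M}(\psi)$ is multiplication by $\psi|_M$, which equals $1$ on $W_0$, so ${\bf 1}_{W_0}={\bf 1}_{W_0}\Pi_0^{\bf M}(\psi)$ and hence ${\bf 1}_{W_0}\kappa(H_0)={\bf 1}_{W_0}\Pi_0^{\bf M}\bigl(\psi\,\kappa(F)\bigr)$. Using $\|{\bf 1}_{W_0}\|\le1$ and that $\Pi_0^{\bf M}$ does not increase norms gives $\|{\bf 1}_{W_0}\kappa(H_0)\|\le\|\psi\,\kappa(F)\|_{\mathscr C^{\bf M}}$. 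Therefore (i) is reduced to the assertion that $\|\psi\,\kappa(F)\|_{\mathscr C^{\bf M}}$ can be made $<\epsilon$ by shrinking $V$ toward $\mathcal Q$.

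The heart of the argument is then the inequality $\limsup_{V\downarrow\mathcal Q}\|\psi\,\kappa(F)\|_{\mathscr C^{\bf M}}\le\|\rho_\mathcal Q^{\bf M}(\kappa(F))\|=0$, that is, the upper semicontinuity of the spatially localized norm as $V$ collapses onto $\mathcal Q$. The lower bound $\|\psi\,\kappa(F)\|\ge\|\rho_\mathcal Q^{\bf M}(\psi\,\kappa(F))\|=\|\rho_\mathcal Q^{\bf M}(\kappa(F))\|$ is immediate from $\rho_\mathcal Q^{\bf M}(\psi)=1$; the content is the reverse. Writing $U:=X\setminus\mathcal Q$ (open, invariant, $M\subset U$) and $\mathscr C_U:={\sf C}^*(\Xi_U,\o_U)=\ker\rho_\mathcal Q$, the relation $\rho_\mathcal Q^{\bf M}(\kappa(F))=0$ yields $\kappa(F)\,\mathscr C\subset\mathscr C_U$. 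For elements $b$ of the \emph{ideal} $\mathscr C_U$ the estimate is elementary: $C_{\rm c}(\Xi_U)$ is dense in $\mathscr C_U$ and each $g\in C_{\rm c}(\Xi_U)$ has $r({\rm supp}\,g)$ compact in $U$, so $\psi g=0$ as soon as ${\rm supp}\,\psi\cap r({\rm supp}\,g)=\emptyset$, which holds for $V$ small; hence $\|\psi b\|\to0$ for every $b\in\mathscr C_U$. The difficulty is that $\kappa(F)$ is \emph{only} a multiplier — it need not lie in $\mathscr C$, equivalently $\kappa(H_0)$ is generally non-compact, precisely because ${\rm supp}\,\kappa$ may meet ${\sf sp}(F_{X_\infty})$, which by Corollary \ref{carefully} is strictly larger than ${\sf sp}(F_\mathcal Q)$ — so one cannot approximate $\kappa(F)$ in norm by ideal elements. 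To supply the missing uniformity I would invoke the upper semicontinuity of the field of regular representations $x\mapsto\|\Pi_x(\cdot)\|$ (available for tractable, hence amenable, groupoids): a sequence of unit vectors in $\H_0$ escaping toward $\mathcal Q$ on which $\kappa(H_0)$ fails to decay would, through the groupoid structure, produce a representation of $\mathscr C_\mathcal Q$ factoring through $\rho_\mathcal Q$ together with a unit vector contradicting $\kappa(F_\mathcal Q)=0$. Equivalently, a Helffer--Sj\"ostrand formula for $\kappa(F)$ reduces the claim to the same localization statement for the resolvents $(F-\zeta)^{-1}$, whose approach to the boundary value $(F_\mathcal Q-\zeta)^{-1}$ is governed by this semicontinuity.

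The main obstacle is exactly this last uniformity: that $\|\psi\,\kappa(F)\|_{\mathscr C^{\bf M}}$ does not exceed $\|\kappa(F_\mathcal Q)\|$ in the limit $V\downarrow\mathcal Q$, \emph{despite} $\kappa(F)$ being a genuine multiplier and $\kappa(H_0)$ generally non-compact; one must argue locally near $\mathcal Q$ rather than globally. Everything else — the truncation of $\kappa$, the passage from (ii) to (i), the cut-off reduction, the faithfulness of $\Pi_0^{\bf M}$, and the ideal case — is routine given the results already established (Corollaries \ref{construint}, \ref{impec} and \ref{carefully}).
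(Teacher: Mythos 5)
Your reduction is sound up to the step you yourself flag as ``the main obstacle,'' and that is exactly where the proposal has a genuine gap: you never prove the decay of $\|\psi\cdot\kappa(F)\|$ as the neighborhood shrinks to $\mathcal Q$, offering instead two undeveloped gestures (upper semicontinuity of $x\mapsto\|\Pi_x(\cdot)\|$, a Helffer--Sj\"ostrand reduction to resolvents) that are neither carried out nor needed. The missing idea is that the support hypothesis forces $\kappa(F)$ to lie in the ideal $\mathscr C_U:={\sf C}^*\big(\Xi_{X\setminus\mathcal Q},\o_{X\setminus\mathcal Q}\big)$ itself, not merely to satisfy $\kappa(F)\,\mathscr C\subset\mathscr C_U$ as you deduce. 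This is Lemma \ref{lemma1} (quoted from \cite{AMP}) applied with $\mathscr A={\sf C}^*(\Xi,\o)^{\bf M}$ and $\mathscr K=\mathscr C_U$: the quotient map commutes with the continuous functional calculus of the normal element $F$, the quotient $\mathscr A/\mathscr K$ is identified with ${\sf C}^*(\Xi_\mathcal Q,\o_\mathcal Q)^{\bf M}$ via Corollary \ref{impec} and Remark \ref{exasperare}, so the image of $F$ there has spectrum ${\sf sp}(F_\mathcal Q)$, on which $\kappa$ vanishes; hence $\kappa(F)\in\mathscr K=\mathscr C_U$. You conflated two ideals: it is true that $\kappa(H_0)$ is in general not compact, i.e. $\kappa(F)\notin\mathscr C_M$, but the relevant ideal is $\mathscr C_{X\setminus\mathcal Q}$, which is much larger than $\mathscr C_M$ since $X\setminus\mathcal Q$ contains all of $M$ together with $X_\infty\setminus\mathcal Q$; membership there is precisely what the hypothesis $\supp(\kappa)\cap{\sf sp}(F_\mathcal Q)=\emptyset$ buys, and it makes your ``elementary ideal case'' apply verbatim.

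Once $\kappa(F)\in\mathscr C_U$ is known, the rest is the paper's argument: approximate $\kappa(F)$ within $\epsilon/4$ by $f_0\in C_{\rm c}\big(\Xi_{X\setminus\mathcal Q}\big)$, choose $\psi\in C(X)$ vanishing on the compact set $\d[\supp(f_0)]\cup\r[\supp(f_0)]$ with $\psi=2$ on $\mathcal Q$ (Lemma \ref{lemma3}), set $W:=\psi^{-1}(1,\infty)$, and conclude from ${\bf 1}_{W_0}\le\psi|_M$ together with the identity $\|\psi|_M\,\kappa(H_0)\|=\|\psi\cdot\kappa(F)\|$ furnished by Lemma \ref{lemma2} and the injectivity of $\Pi_0^{\bf M}$. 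Your derivation of (ii) from (i) via an auxiliary $\tilde\kappa$ works but is more roundabout than necessary: since $e^{itH_0}$ is unitary and commutes with $\kappa(H_0)$, one has $\|{\bf 1}_{W_0}e^{itH_0}\kappa(H_0)\|=\|{\bf 1}_{W_0}\kappa(H_0)e^{itH_0}\|\le\|{\bf 1}_{W_0}\kappa(H_0)\|$ directly, with a $W$ manifestly independent of $t$.
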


\begin{Remark}\label{tosee}
\normalfont{To see the generic usefulness of \eqref{traznea}, suppose that one is interested in the unitary group
  $\big\{e^{itH_0}\!\mid\!t\in\R\big\}$\,, perhaps describing the evolution of a quantum system. The estimate holds for every "time"
  $t\in\R$ and any "state" $u\in\H_0$\,. We used terminology from Quantum Mechanics, having in mind the quantum interpretation of
  quantities as $\|{\bf 1}_{W_0}v_t\|^2_{\H_0}$ (a localization probability). If $H_0$ is the Hamiltonian of a quantum system, a
  condition of the form $\kappa(H_0)u=u$ means roughly that the vector (quantum state) $u$ have energies belonging to the support of the
  function $\kappa$\,, interpreted as a "localization in energy". In a typical case, $\kappa$ could be a good approximation of the characteristic function of a real interval $I$. So, finally, we
  expect \eqref{traznea} to hold because of some correlation between the energy interval $I$ and the region $W_0\subset M$ in which "propagation is improbable for time-dependent states
  $u_t=e^{itH_0}u$ with $u$ having energies only belonging to $I$".}
\end{Remark} 

Of course (ii) follows from (i), because $e^{itH_0}$ is a unitary operator and it commutes with $\kappa(H_0)$\,. We put \eqref{traznea}
into evidence because of its dynamical interpretation. So we need to show (i).

\smallskip
We start with three preliminary results; by combining them, Theorem \ref{main} will follow easily. The first one is abstract, it deals
with elements belonging to a $C^*$-algebra endowed with a distinguished ideal, and it reproduces \cite[Lemma 1]{AMP}.

\begin{Lemma}\label{lemma1}
Let $G$ be a normal element of a unital $C^*$-algebra $\mathscr A$ and let $\mathscr K$ be a closed self-adjoint two-sided ideal of
$\,\mathscr A$. Denote by ${\sf sp}_\mathscr K(G)$ the spectrum of the canonical image of $G$ in the quotient $\mathscr A/\mathscr K$. If
$\kappa\in C_0(\R)_\R$ and $\supp(\kappa)\cap{\sf sp}_\mathscr K(G)=\emptyset$\,, then $\kappa(G)\in \mathscr K$.
\end{Lemma}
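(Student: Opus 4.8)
The plan is to reduce everything to the single fact that the continuous functional calculus commutes with the quotient $*$-morphism $q\colon\mathscr A\to\mathscr A/\mathscr K$. Since $\mathscr K$ is a closed, self-adjoint, two-sided ideal and $\mathscr A$ is unital, $q$ is a unital $*$-homomorphism with $\ker q=\mathscr K$, and $q(G)$ is again normal. By definition ${\sf sp}_\mathscr K(G)={\sf sp}\big(q(G)\big)$, and since a unital $*$-homomorphism cannot enlarge spectra we have ${\sf sp}_\mathscr K(G)\subset\Sigma:={\sf sp}(G)$. As $\kappa(G)\in\mathscr K$ is equivalent to $q\big(\kappa(G)\big)=0$, it suffices to prove the identity $q\big(\kappa(G)\big)=\kappa\big(q(G)\big)$ and then to observe that the right-hand side vanishes.

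For the commutation identity I would argue by uniqueness of the functional calculus. Regarding $\kappa$ (restricted to the spectrum) as an element of $C(\Sigma)$, let $\Phi_G\colon C(\Sigma)\to\mathscr A$ denote the continuous functional calculus of $G$, i.e. the unique unital $*$-homomorphism sending the coordinate function $\lambda\mapsto\lambda$ to $G$, and let $\Phi_{q(G)}\colon C\big({\sf sp}_\mathscr K(G)\big)\to\mathscr A/\mathscr K$ be that of $q(G)$. Writing $r\colon C(\Sigma)\to C\big({\sf sp}_\mathscr K(G)\big)$ for the restriction morphism (well defined because ${\sf sp}_\mathscr K(G)\subset\Sigma$), both composites $q\circ\Phi_G$ and $\Phi_{q(G)}\circ r$ are unital $*$-homomorphisms $C(\Sigma)\to\mathscr A/\mathscr K$ carrying the coordinate function to $q(G)$. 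Since polynomials in the coordinate function and its conjugate are dense in $C(\Sigma)$ by Stone--Weierstrass, a unital $*$-homomorphism out of $C(\Sigma)$ is determined by the image of the coordinate function; hence $q\circ\Phi_G=\Phi_{q(G)}\circ r$. Evaluating at $\kappa$ gives $q\big(\kappa(G)\big)=\kappa\big(q(G)\big)$.

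To finish, the hypothesis $\supp(\kappa)\cap{\sf sp}_\mathscr K(G)=\emptyset$ means precisely that $\kappa$ vanishes identically on ${\sf sp}_\mathscr K(G)={\sf sp}\big(q(G)\big)$, so $\kappa\big(q(G)\big)=0$ by the functional calculus for the normal element $q(G)$. Combining with the previous paragraph, $q\big(\kappa(G)\big)=0$, i.e. $\kappa(G)\in\ker q=\mathscr K$, as claimed.

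The only genuinely delicate point is the commutation identity $q\big(\kappa(G)\big)=\kappa\big(q(G)\big)$ for a merely normal (not necessarily self-adjoint) element: one must approximate $\kappa$ in the sup-norm on $\Sigma$ by polynomials in both the coordinate function and its conjugate, and then pass to the limit using that $q$ and both functional calculi are norm-contractive $*$-homomorphisms. Everything else is bookkeeping. Since $G$ is normal, $\Sigma$ is a compact subset of $\mathbb C$; one reads $\kappa\in C_0(\R)_\R$ as its restriction to $\Sigma$, which is all that enters the argument.
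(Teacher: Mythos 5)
Your proof is correct, and it is the standard argument: the paper does not actually prove this lemma but simply cites it as \cite[Lemma 1]{AMP}, and the intended proof is exactly the one you give --- the continuous functional calculus commutes with the quotient morphism $q$ (by uniqueness of unital $*$-homomorphisms out of $C({\sf sp}(G))$ determined by the coordinate function), ${\sf sp}_{\mathscr K}(G)\subset{\sf sp}(G)$, and $\kappa$ vanishes on ${\sf sp}_{\mathscr K}(G)$, so $q(\kappa(G))=\kappa(q(G))=0$. The only wrinkle is that $\kappa$ is a function on $\R$ while the spectrum of a normal element lives in $\mathbb C$; this imprecision is inherited from the statement of the lemma (and from the paper's own conventions), and your reading of $\kappa$ as a function on the spectrum is the intended one.
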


The second preliminary result concerns a relationship between multiplication operators and elements of the twisted groupoid $C^*$-algebra.

\begin{Lemma}\label{lemma2}
\begin{enumerate}
\item[(i)] The (Abelian) $C^*$-algebra $C(X)$ of complex continuous functions on $X$ acts by double centralizers
\begin{equation*}\label{socoteala}
C(X)\times C_{\rm c}(\Xi)\ni(\psi,f)\to\psi\cdot f:=(\psi\circ\r)\,f\in C_{\rm c}(\Xi)\,,
\end{equation*}
\begin{equation*}\label{zocoteala}
C_{\rm c}(\Xi)\times C(X)\ni(f,\psi)\to f\cdot\psi:=(\psi\circ\d)\,f\in C_{\rm c}(\Xi)\,.
\end{equation*}
This action extends to the twisted groupoid $C^*$-algebra and leads to an embedding in the multiplier $C^*$-algebra: $C(X)\hookrightarrow
{\sf C}^*(\Xi,\o)^{\bf M}$\,.
\item[(ii)] The canonical extension $\Pi_0^{\bf M}$ of the vector representation to the multiplier algebra ${\sf C}^*(\Xi,\o)^{\bf M}$ acts on $C(X)$ by multiplication operators:
\begin{equation}\label{titiriseala}
\Pi_0^{\bf M}(\psi)u=\psi|_M\,u\,,\quad\forall \,\psi\in C(X)\,,\,u\in L^2(M)\,.
\end{equation}
\end{enumerate}
\end{Lemma}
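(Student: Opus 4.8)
The plan is to exhibit, for each $\psi\in C(X)$, the pair of left and right multiplication maps $L_\psi(f):=(\psi\circ\r)f$ and $R_\psi(f):=(\psi\circ\d)f$ as a double centralizer of $C_{\rm c}(\Xi)$, then to show it is bounded for the $C^*$-norm, and finally to read off its shape in the vector representation. For part (i) I would first note that $L_\psi$ and $R_\psi$ clearly map $C_{\rm c}(\Xi)$ into itself, and then verify the defining relations. The module identities $L_\psi(f\star_\o g)=L_\psi(f)\star_\o g$ and $R_\psi(f\star_\o g)=f\star_\o R_\psi(g)$ reduce, after writing out the convolution in its two equivalent forms, to the observation that on the support of $\lambda^{\r(\xi)}$ one has $\r(\eta)=\r(\xi)$ and on the support of $\lambda^{\d(\xi)}$ one has $\r(\eta)=\d(\xi)$; thus the pulled-back factor $\psi\circ\r$ (resp. $\psi\circ\d$, evaluated via $\d(\eta^{-1})=\r(\eta)$) can be pulled out of the integral. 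The crucial compatibility relation $f\star_\o L_\psi(g)=R_\psi(f)\star_\o g$ follows in the same way from the elementary identity $\r(\eta^{-1}\xi)=\d(\eta)$, which turns the factor $\psi(\r(\eta^{-1}\xi))$ occurring on the left into $\psi(\d(\eta))$ occurring on the right.

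For boundedness and the embedding, the plan is to compute $L_\psi$ inside each regular representation. A direct calculation with the convolution formula gives $\Pi_x\big(L_\psi(f)\big)=M_{\psi\circ\r}\,\Pi_x(f)$, where $M_{\psi\circ\r}$ denotes pointwise multiplication by $\psi\circ\r$ on $\H_x=L^2(\Xi_x;\lambda_x)$; again this rests on $\r$ being constant (equal to $\r(\xi)$) along the integration fibre. Since $\|M_{\psi\circ\r}\|_{\mathbb B(\H_x)}\le\|\psi\|_\infty$, both $L_\psi$ and (symmetrically) $R_\psi$ are contractive for the reduced norm \eqref{redunorm}, hence extend to $\mathscr C:={\sf C}^*(\Xi,\o)$ and define an element $M_\psi:=(L_\psi,R_\psi)\in\mathscr C^{\bf M}$. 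The assignment $\psi\mapsto M_\psi$ is plainly multiplicative; to see it is a $*$-homomorphism I would check $M_\psi^{\,\star}=M_{\bar\psi}$, which after unwinding the involution $f^{\star_\o}(\xi)=\overline{\o(\xi,\xi^{-1})}\,\overline{f(\xi^{-1})}$ comes down to the symmetry $\o(\xi,\xi^{-1})=\o(\xi^{-1},\xi)$; this in turn is obtained from the cocycle identity \eqref{doicocic} specialized to $(\xi,\xi^{-1},\xi)$ together with the normalization \eqref{normaliz}. Injectivity (in fact isometry) follows because $\r:\Xi\to X$ is onto, so $\sup_{x}\|M_{\psi\circ\r}\|_{\mathbb B(\H_x)}=\|\psi\|_\infty$; this yields the desired embedding $C(X)\hookrightarrow\mathscr C^{\bf M}$.

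For part (ii) I would simply transport the formula of the previous step through the unitary equivalence of Remark \ref{idem}. Fixing $z\in M$, the computation above gives $\Pi_z^{\bf M}(\psi)=M_{\psi\circ\r}$ on $L^2(\Xi_z;\lambda_z)$, and then \eqref{ibidem} yields $\Pi_0^{\bf M}(\psi)={\sf R}_z^{-1}M_{\psi\circ\r}{\sf R}_z$. Since $({\sf R}_zv)(\xi)=v(\r_z(\xi))$ and $\r_z=\r|_{\Xi_z}$ is a bijection onto $M$ with $\r\circ\r_z^{-1}={\rm id}_M$, conjugating multiplication by $\psi\circ\r$ back to $L^2(M,\mu)$ produces exactly multiplication by $\psi|_M$, which is \eqref{titiriseala}.

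The genuinely delicate point is the verification of the double centralizer axioms and of the representation formula: everything hinges on correctly tracking which of $\r,\d$ is constant along the relevant Haar fibre and on the two groupoid identities $\r(\eta^{-1}\xi)=\d(\eta)$ and $\d(\xi^{-1})=\r(\xi)$. The only place where the cocycle genuinely intervenes is the $*$-compatibility, and the sole non-formal input there is the symmetry $\o(\xi,\xi^{-1})=\o(\xi^{-1},\xi)$; once that is in hand, the remaining manipulations are routine bookkeeping with the convolution and involution formulas.
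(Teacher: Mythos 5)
Your proposal is correct and follows essentially the same route as the paper: the key computation for (ii) — pulling the factor $\psi\circ\r$ out of the convolution integral because $\r(\eta)=\r(\xi)$ on the support of $\lambda^{\r(\xi)}$ — is exactly the one the paper performs, and your verification of the centralizer axioms and of the bound via the regular representations simply spells out what the paper delegates to the citation of Renault for part (i). The cocycle symmetry $\o(\xi,\xi^{-1})=\o(\xi^{-1},\xi)$ that you invoke for the $*$-compatibility does follow from \eqref{doicocic} with $(\xi,\xi^{-1},\xi)$ and \eqref{normaliz}, so that step is sound.
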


\begin{proof}
The point (i) is straightforward; actually it is a particular case of \cite{Re}[II,Prop.\,2.4(ii)] (take there $\H:=X$ to be the closed subgroupoid of $\mathcal G=\Xi$)\,.

\smallskip
The point (ii) is also straightforward. One has to check, for instance, the following identity in $\mathbb B\big[L^2(M)\big]$\,:
\begin{equation}\label{desguise}
\Pi_0(\psi\cdot f)=\psi|_M\,\Pi_0(f)\,,\quad\forall\,\psi\in C(X)\,,\,f\in C_{\rm c}(\Xi)\,.
\end{equation}
Recall that $\r_z$ is the restriction of range map $\r$ to the $\d$-fiber $\Xi_z$, and it is a bijection $\Xi_z\to M$ if $z\in M$. We
can write for any $z\in M$, $\xi\in\Xi_z$ and $u\in L^2(M,\mu)\equiv L^2(M,\r_z(\lambda_z))$
\begin{equation*}
\begin{aligned}
\big[ (\psi\cdot f) \star_\o\! (u\circ \r_z) \big] (\xi)& = \int_\Xi\psi[\r(\eta)]f(\eta)\,u[\r(\eta^{-1}\xi)]\, \o \big( \eta,\eta^{-1} \xi \big)\, d\lambda^{\r(\xi)}(\eta)\\
& = \psi[\r(\xi)]\int_\Xi f(\eta)\, u[\r(\eta^{-1}\xi)]\, \o\big(\eta,\eta^{-1}\xi\big)\, d\lambda^{\r(\xi)}(\eta)\\
& = \big[ (\psi \circ \r_z) \big(f \star_\o \!(u \circ \r_z) \big)\big](\xi)\,,
\end{aligned}
\end{equation*}
which is \eqref{desguise} in desguise, by \eqref{ibidem}. We leave the remaining details to the reader.
\end{proof}

The third result essentially speaks of a bounded approximate unit.

\begin{Lemma}\label{lemma3}
Let $A\subset X$ be a closed invariant subset and $f\in{\sf C}^*\big(\Xi_{X\setminus A},\o_{X\setminus A}\big)\subset{\sf
  C}^*(\Xi,\o)$\,. For every $\epsilon>0$ there exists $\psi\in C(X)$ with $\psi(X)=[0,2]$\,, $\psi|_A=2$ and
\begin{equation}\label{estimam}
  \|\psi\cdot f\|_{{\sf C}^*(\Xi,\o)}+\|f\cdot\psi\|_{{\sf C}^*(\Xi,\o)}\,\le\epsilon\,.
\end{equation}
\end{Lemma}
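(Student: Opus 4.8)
The plan is to reduce the statement to the case of a compactly supported $f$ and then manufacture $\psi$ by an Urysohn argument, exploiting that the support of such an $f$ stays in a compact subset of $M:=X\!\setminus\!A$, precisely where we are free to let $\psi$ vanish.

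First I would record the two facts furnished by Lemma \ref{lemma2}(i): the action of $C(X)$ is given on $C_{\rm c}(\Xi)$ by the explicit formulas $\psi\cdot f=(\psi\circ\r)f$ and $f\cdot\psi=(\psi\circ\d)f$, and it realizes $C(X)$ as a (necessarily isometric) $C^*$-subalgebra of the multiplier algebra, so that $\|\psi\cdot h\|+\|h\cdot\psi\|\le 2\,\|\psi\|_\infty\,\|h\|$ for every $h\in{\sf C}^*(\Xi,\o)$; in particular any $\psi$ valued in $[0,2]$ has multiplier norm at most $2$. Since $M$ is open and invariant and ${\sf C}^*(\Xi_M,\o_M)$ is the corresponding ideal of ${\sf C}^*(\Xi,\o)$ (Proposition \ref{cociclizare}), the subspace $C_{\rm c}(\Xi_M)$ is dense in it for the norm of ${\sf C}^*(\Xi,\o)$. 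Hence, given $\epsilon>0$, I would pick $g\in C_{\rm c}(\Xi_M)$ with $\|f-g\|_{{\sf C}^*(\Xi,\o)}<\epsilon/4$; by the submultiplicativity bound above it then suffices to find $\psi$ valued in $[0,2]$, equal to $2$ on $A$, for which $\psi\cdot g=g\cdot\psi=0$.

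To build such a $\psi$, set $K:=\r(\supp g)\cup\d(\supp g)$. Because $M$ is invariant, both $\r$ and $\d$ map $\Xi_M$ into $M$, so $K$ is a compact subset of $M$, and in particular $K\cap A=\emptyset$. The unit space $X$ is locally compact, Hausdorff and second countable, hence metrizable and normal, so Urysohn's lemma provides a continuous $\psi:X\to[0,2]$ with $\psi|_A=2$ and $\psi|_K=0$. For any $\xi\in\Xi$ with $g(\xi)\ne 0$ one has $\xi\in\supp g$, whence $\r(\xi),\d(\xi)\in K$ and $\psi(\r(\xi))=\psi(\d(\xi))=0$; thus $(\psi\circ\r)g$ and $(\psi\circ\d)g$ vanish identically, i.e. $\psi\cdot g=g\cdot\psi=0$ in $C_{\rm c}(\Xi)$.

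Combining the two steps, $\|\psi\cdot f\|\le\|\psi\cdot g\|+\|\psi\|_\infty\|f-g\|\le 2\|f-g\|$ and likewise $\|f\cdot\psi\|\le 2\|f-g\|$, so the left-hand side of \eqref{estimam} is at most $4\|f-g\|<\epsilon$, as required. The only genuinely non-formal points are the verification that $K$ is compact and disjoint from $A$ (resting on the invariance of $M$ and the continuity of the structure maps) and the fact that the closure of $C_{\rm c}(\Xi_M)$ inside ${\sf C}^*(\Xi,\o)$ coincides with the ideal ${\sf C}^*(\Xi_M,\o_M)$, so that the approximation can be carried out in the ambient norm; both are supplied by the earlier results, and the rest is the standard Urysohn construction.
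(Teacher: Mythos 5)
Your argument is correct and is essentially the paper's own proof: approximate $f$ by some $g\in C_{\rm c}(\Xi_{X\setminus A})$ within $\epsilon/4$, take the Urysohn function vanishing on the compact set $\r(\supp g)\cup\d(\supp g)\subset X\!\setminus\!A$ and equal to $2$ on $A$, and conclude via the contractivity of the multiplier action of $C(X)$. The only cosmetic difference is that the paper writes the final estimate as $\|\psi\cdot(f-f_0)\|+\|(f-f_0)\cdot\psi\|\le 2\|\psi\|_\infty\|f-f_0\|$ directly, whereas you pass through a triangle inequality first; the content is identical.
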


\begin{proof}
Let us set $B:=X\!\setminus\!A$\,. By density, there exists $f_0\in C_{\rm c}\big(\Xi_B\big)$ such that $\|f-f_0\|_{{\sf C}^*(\Xi,\o)}\,\le\epsilon/4$\,. Set
\begin{equation*}
S_0:=\d[\supp(f_0)]\cup \r[\supp(f_0)]\,;
\end{equation*} 
since $\Xi_B=\Xi^B\!=\Xi^B_B$\,, the subset $S_0$ is compact and disjoint from $A$\,. So there is a continuous function
$\psi:X\to[0,2]$ with $\psi|_A=2$ and $\psi|_{S_0}=0$\,. In particular, $\psi\cdot f_0$ and $f_0\cdot\psi$ both vanish. Then
\begin{equation*}
\begin{aligned}
\|\psi\cdot f\|_{{\sf C}^*(\Xi,\o)}\!+\|f\cdot\psi\|_{{\sf C}^*(\Xi)}&=\,\|\psi\cdot(f-f_0)\|_{{\sf
    C}^*(\Xi,\o)}\!+\|(f-f_0)\cdot\psi\|_{{\sf C}^*(\Xi,\o)}\\ &\le 2\|\psi\|_\infty\|f-f_0\|_{{\sf C}^*(\Xi,\o)}\,\le\epsilon\,,
\end{aligned}
\end{equation*}
since $C(X)$ has been embedded isometrically in the multiplier algebra of ${\sf C}^*(\Xi,\o)$\,, by Lemma \ref{lemma2}.
\end{proof}

We can prove now Theorem \ref{main}, (i)\,.

\begin{proof}
We use the identification
\begin{equation*}
{\sf C}^*\big(\Xi_{\mathcal Q},\o_\mathcal Q\big)^{\bf M}\cong{\sf C}^*(\Xi,\o)^{\bf M}/{\sf C}^*\big(\Xi_{X\setminus\mathcal Q},\o_{X\setminus\mathcal Q}\big)\,,
\end{equation*}
implying the fact that the spectrum ${\sf sp}_{{\sf C}^*(\Xi_{X\setminus\mathcal Q})}(F)$ of the image of $F$ in the
quotient $C^*$-algebra coincides with ${\sf sp}\big(F_\mathcal Q\!\mid\!{\sf C}^*(\Xi_\mathcal Q,\o_\mathcal Q)^{\bf
  M}\big)$\,. Then, since we assumed that $\,\supp(\kappa)\cap{\sf sp}\big(F_\mathcal Q\big)=\emptyset$\,, one has $\kappa(F)\in{\sf
  C}^*\big(\Xi_{X\setminus\mathcal Q},\o_{X\setminus\mathcal Q}\big)$; this follows from Lemma \ref{lemma1}, with $\mathscr A\!:={\sf
  C}^*(\Xi,\o)^{\bf M}$, $\mathscr K\!:={\sf C}^*\big(\Xi_{X\setminus\mathcal Q},\o_{X\setminus\mathcal Q}\big)$\,.

\smallskip
Using Lemma \ref{lemma2}, the fact that morphisms commute with the functional calculus and the injectivity of $\Pi^{\bf M}_0$, for any $\psi\in C(X)$ one has
\begin{equation*}
\begin{aligned}
\|\psi|_M\,\kappa(H_0)\|_{\mathbb B(\H_0)}\,&=\,\|\Pi^{\bf M}_0(\psi)\,\kappa\big[\Pi^{\bf M}_0(F)\big]\|_{\mathbb B(\H_0)}\\ &=\,\|\Pi^{\bf
  M}_0\big[\psi\cdot\kappa(F)\big]\|_{\mathbb B(\H_0)}\\ &=\,\|\psi\cdot\kappa(F)\|_{{\sf C}^*(\Xi,\o)}.
\end{aligned}
\end{equation*}

Let us fix $\epsilon>0$\,. By Lemma \ref{lemma3} with $A=\mathcal Q$ and $f=\kappa(F)$\,, there is a continuous function $\psi:X\to[0,2]$ with $\psi|_\mathcal Q=2$ and
\begin{equation*}
\|\psi|_M\,\kappa(H_0)\|_{\mathbb B(\H_0)}\,=\,\|\psi\cdot\kappa(F)\|_{{\sf C}^*(\Xi,\o)}\,\le\epsilon\,.
\end{equation*}
Let us set $W:=\psi^{-1}(1,\infty)$\,; it is an open neighborhood of $\mathcal Q$ on which ${\bf 1}_W\le\psi$\,. In particular, ${\bf 1}_{W_0}={\bf 1}_W|_M\le\psi|_M$\,. Then
\begin{equation*}
\begin{aligned}
\|{\bf 1}_{W_0}\,\kappa(H_0)\|_{\mathbb B(\H_0)}&=\,\|\kappa(H_0){\bf 1}_{W_0}\kappa(H_0)\|^{1/2}_{\mathbb
  B(\H_0)}\\ &\le\,\|\kappa(H_0)(\psi|_M)^2\kappa(H_0)\|^{1/2}_{\mathbb B(\H_0)}\\ &=\,\|\psi|_M\,\kappa(H_0)\|_{\mathbb B(\H_0)}\,\le\epsilon
\end{aligned}
\end{equation*}
and the proof is over.
\end{proof}

\begin{Remark}\label{oscilez}
{\rm When applying Theorem \ref{main} one might want to take Remark \ref{doomsday} into consideration, in order to have a rather large class of elements to which the result applies and the restriction operation is explicit.  On the other hand, to $F\in{\sf C}^*(\Xi)^{\bf M}$ one can add a "potential" $V\in C(X)\subset{\sf
    C}^*(\Xi)^{\bf M}$, for which $\rho_\mathcal Q^{\bf M}(V)=V|_\mathcal Q$\,.}
\end{Remark}

\begin{Remark}\label{kinshasa}
\normalfont{We already know from Corollary \ref{carefully} that ${\sf sp}\big(F_\mathcal Q\big)$ is included (very often strictly) in
  the essential spectrum of the operator $H_0$\,. Thus, in Theorem  \ref{main}, the interesting case is
\begin{equation*}
\supp(\kappa)\subset{\rm sp}_{\rm ess}(H_0)\!\setminus\!{\sf sp}\big(F_\mathcal Q\big)\subset{\rm sp}(H_0)\!\setminus\!{\sf sp}\big(F_\mathcal Q\big)\,.
\end{equation*}
}
\end{Remark}

\subsection{Standard groupoids with totally intransitive groupoids at the boundary}\label{gorofarat}

We recall that {\it a totally intransitive groupoid} is a groupoid $\Xi \tto X$ for which the source and the range maps coincide
\cite[Definition 1.5.9]{Mac}. Then the groupoid can be written as the disjoint union of its isotropy groups.

\smallskip
Assume again that $\Xi$ is a standard groupoid over the unit space $X=M\sqcup X_\infty$, with open dense orbit $M$ having trivial isotropy. Also assume that "the restriction at infinity"
\begin{equation*}
  \Xi_{X_\infty} =: \Si=\bigsqcup_{n\in X_\infty}\!\Si_{n}
\end{equation*} 
is a totally transitive groupoid, where $\Si_{n}\!:=\Xi^n_n$ is the isotropy group of $n\in X_\infty$\,. We set
\begin{equation*}
  {\rm q}\!:={\rm d}_\infty={\rm r}_\infty:\Si\to X_\infty
\end{equation*} 
for the bundle map. Since $\Xi$ is standard, each $\Si_{n}$ is an amenable, second countable, Hausdorrff locally compact group.  Since,
by assumption, $\Xi$ has a (chosen) right Haar measure, this is also true for the closed restriction $\Si$\,; for every $n\in X_\infty$ the
fiber measure $\lambda_n$ is a right Haar measure on $\Si_{n}$\,.

\smallskip
The orbit structure is very simple: There is the main (open, dense) orbit $M$, and then all the points $n\in X_\infty$ form singleton
orbits by themselves: $\mathcal O_n=\mathcal Q_n=\{n\}$\,. The Hilbert space $\H_n$ is the $L^2$-space of the group $\Si_n$ with respect to
the Haar measure $\lambda_n$\,. We also take into consideration a $2$-cocycle $\o$ on $\Xi$\,. Its restriction to $\Xi_M$ is a
$2$-coboundary, by Remark \ref{idem}. The restrictions $\o_n\!:\Sigma_n^{(2)}\equiv\Sigma^2\to\mathbb T$ are usual group
$2$-cocycles, for all $n\in X_\infty$\,. The twisted groupoid $C^*$-algebra ${\sf C}^*\big(\Si_{n},\o_n\big)$ corresponding to the
quasi-orbit $\{n\}$ coincides now with the twisted group $C^*$-algebra of $\Si_n$\,.

\smallskip
Let uf fix a function $F$ belonging to $L^{\infty,1}_{\rm cont}(\Xi)\subset {\sf C}^*(\Xi,\o)$\,, cf. Remark \ref{doomsday}. One gets restrictions
\begin{equation*}
  F_\infty := \rho_{X_\infty}(F) = F|_{\Si}\in L^{\infty,1}_{\rm cont}(\Si)\,,
\end{equation*} 
\begin{equation*}
  F_{\{n\}} := \rho_{n}(F) = F|_{\Si_n}\in L^{\infty,1}_{\rm cont}(\Si_n) = L^1(\Si_n)\cap C(\Si_n)\,.
\end{equation*}
It is easy to see that the operators $H_n\!:=\Pi_n(F)$ given by regular representations are just twisted convolution operators:
\begin{equation}\label{hashen}
  [H_n(u)](a) = \int_{\Si_n}\!\!\o_n(ab^{-1}\!,b) F_{n}\big(ab^{-1}\big)u(b)d\lambda_n(b)\,,\quad u\in
  L^2(\Si_n;\lambda_n)\,,\ a\in \Si_n\,.
\end{equation}

Then one can easily write down {\it formulae for the essential spectrum and the essential numerical range of the operator
  $H_0=\Pi_0(F)\in\mathbb B\big[L^2(M)\big]$} in the vector representation, using the results from subsections \ref{ameny} and
\ref{caloriforit}. In addition, $H_0$ {\it is Fredholm if and only if all the operators $H_n$ are invertible}. We only include here only a
particular case, in which more can be said about each individual $H_n$\,. This is based on the following remark.

\begin{Remark}\label{mantorc}
\normalfont{Assume that $\o_n$ is trivial and the isotropy group $\Si_n$ is Abelian, with Pontryagin dual group
  $\widehat\Si_n$\,. The Fourier transform implements an isomorphism between the (Abelian) group $C^*$-algebra ${\sf C}^*(\Si_n)$ and the
  function algebra $C_0\big(\widehat\Si_n\big)$ of all complex continuous functions on $\widehat\Si_n$ decaying at infinity. The
  operator $H_n$ of convolution by $F|_{\Si_n}$ is unitarily equivalent to the operator of multiplication by the Fourier transform $\widehat{F|_{\Si_n}}$ acting in
  $L^2\big(\widehat\Si_n;\widehat\lambda_n\big)$\,, where $\widehat\lambda_n$ is a Haar measure on the dual group,
  conveniently normalized. The spectrum of this operator is simply the closure of the range of this function ${\sf R}_n\!:=\widehat{F|_{\Si_n}}\big(\widehat\Si_n\big)$\,.}
\end{Remark}

\begin{Corollary}\label{sugochev}
Suppose that the groupoid $\Xi$ is standard, with main orbit $M$ and with abelian totally intransitive groupoid at infinity. Then, using the notations above, one has
\begin{equation}\label{vaviloni}
  {\sf sp}_{\rm ess}(H_0)=\bigcup_{n\in X_\infty}\!\overline{{\sf R}_n}\,,
\end{equation}
\begin{equation}\label{vavilonie}
  {\sf nr}_{\rm ess}(H_0)=\sf co\Big(\bigcup_{n\in X_\infty}\!\!\overline{{\sf R}_n}\Big)\,.
\end{equation}
\end{Corollary}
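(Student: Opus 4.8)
The plan is to reduce everything to the general decomposition results of Subsections \ref{ameny} and \ref{caloriforit}, and then to evaluate the resulting local pieces explicitly using the abelian structure recorded in Remark \ref{mantorc}. First I would record the orbit structure: because $\Sigma=\Xi_{X_\infty}$ is totally intransitive, every $n\in X_\infty$ is a fixed point, so $\mathcal O_n=\mathcal Q_n=\{n\}$, and the singletons $\{\{n\}\}_{n\in X_\infty}$ form a covering of $X_\infty$ by quasi-orbits, each $n$ being generic for its own quasi-orbit. Since $\Xi$ is standard and $F\in L^{\infty,1}_{\rm cont}(\Xi)\subset\mathscr C:={\sf C}^*(\Xi,\o)$ (so the scalar part is ${\sf s}=0$), Corollary \ref{carefully} applied to this covering gives
\begin{equation*}
{\sf sp}_{\rm ess}(H_0)=\{0\}\cup\bigcup_{n\in X_\infty}{\sf sp}(H_n),
\end{equation*}
while the essential-numerical-range counterpart of Subsection \ref{caloriforit} (Theorem \ref{carefree} applied at a regular $z\in M$, for which $\mathcal Q_z=X$ and $\mathcal Q_z^{\rm n}=X_\infty$, then transported to the vector representation via Remark \ref{idem}) gives
\begin{equation*}
{\sf nr}_{\rm ess}(H_0)=\overline{\rm co}\Big(\{0\}\cup\bigcup_{n\in X_\infty}{\sf nr}(H_n)\Big).
\end{equation*}

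Second, I would identify each local operator. By \eqref{hashen}, $H_n=\Pi_n(F)$ is the twisted convolution operator on $L^2(\Sigma_n;\lambda_n)$ with kernel $F|_{\Sigma_n}$. Under the hypothesis that the isotropy groups are abelian with trivial restricted cocycle, Remark \ref{mantorc} shows that the Fourier transform conjugates $H_n$ into multiplication by $\widehat{F|_{\Sigma_n}}\in C_0(\widehat\Sigma_n)$ on $L^2(\widehat\Sigma_n;\widehat\lambda_n)$. Such a multiplication operator is normal, so its spectrum is the closure of its range, ${\sf sp}(H_n)=\overline{{\sf R}_n}$, and its numerical range is the (closed) convex hull of its spectrum, ${\sf nr}(H_n)=\overline{\rm co}(\overline{{\sf R}_n})$. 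Substituting these into the two displays, and using $\overline{\rm co}\big(\bigcup_n\overline{\rm co}(\overline{{\sf R}_n})\big)=\overline{\rm co}\big(\bigcup_n\overline{{\sf R}_n}\big)$, reduces the right-hand sides to $\{0\}\cup\bigcup_n\overline{{\sf R}_n}$ and $\overline{\rm co}\big(\{0\}\cup\bigcup_n\overline{{\sf R}_n}\big)$, respectively.

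Third comes the clean-up that produces the exact form of \eqref{vaviloni} and \eqref{vavilonie}. The point $0$ is already contained in $\bigcup_n\overline{{\sf R}_n}$: since each isotropy group $\Sigma_n$ is non-compact, $\widehat\Sigma_n$ is non-compact and $\widehat{F|_{\Sigma_n}}$ decays at infinity, so $0$ lies in the closure of its range; hence the explicit $\{0\}$ may be dropped, yielding \eqref{vaviloni}. For \eqref{vavilonie} I still need to replace $\overline{\rm co}$ by the plain convex hull ${\sf co}$. Here I would invoke that $\bigcup_n\overline{{\sf R}_n}={\sf sp}_{\rm ess}(H_0)$ is closed, being a spectrum computed in the $C^*$-algebra $\mathscr C^{\bf m}_{X_\infty}$, and bounded, hence compact in $\mathbb C\cong\mathbb R^2$; by Carathéodory's theorem the convex hull of a compact planar set is compact, so ${\sf co}\big(\bigcup_n\overline{{\sf R}_n}\big)$ is already closed and coincides with $\overline{\rm co}\big(\bigcup_n\overline{{\sf R}_n}\big)$.

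I expect the genuinely delicate points to be exactly these last bookkeeping steps rather than the structural reduction: verifying that the contribution $\{0\}$ is subsumed (which relies on non-compactness of the isotropy groups, a hypothesis that should be made explicit) and justifying that the convex hull is automatically closed (which uses the planarity of the spectrum together with the closedness of the union coming from the $C^*$-algebraic description of the essential spectrum). Everything upstream is a direct application of Corollary \ref{carefully}, Theorem \ref{carefree}, and the Fourier identification of Remark \ref{mantorc}, and the ${\rm d}_L$-type fiber analysis is trivial because the orbits at infinity are singletons.
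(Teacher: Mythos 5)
Your reduction is the same as the paper's: cover $X_\infty$ by the singleton quasi-orbits $\{n\}$, apply Corollary \ref{carefully} and Theorem \ref{carefree} (transported to $H_0$ via Remark \ref{idem}), and identify each $H_n$ as a multiplication operator through Remark \ref{mantorc}. The paper additionally records a shortcut you do not use: since the image of $\Pi_0[{\sf C}^*(\Xi)]$ in the Calkin algebra is abelian, $H_0$ is essentially normal, so \eqref{vavilonie} follows from \eqref{vaviloni} at once; your Carath\'eodory argument for replacing $\overline{\rm co}$ by ${\sf co}$ is a correct substitute for what the paper dismisses as ``easy to see''.

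The one step that does not work as written is your justification that $0\in\bigcup_{n}\overline{{\sf R}_n}$. You argue that non-compactness of $\Sigma_n$ forces $\widehat\Sigma_n$ to be non-compact, but Pontryagin duality exchanges compactness with \emph{discreteness}: $\Sigma_n$ non-compact only gives $\widehat\Sigma_n$ non-discrete (for instance $\Sigma_n=\mathbb Z$ is non-compact while $\widehat{\mathbb Z}=\mathbb T$ is compact). What your argument actually requires is that $\Sigma_n$ be non-discrete, equivalently that $\widehat\Sigma_n$ be non-compact, so that the $C_0$-function $\widehat{F|_{\Sigma_n}}$ has $0$ in the closure of its range. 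Neither non-compactness nor non-discreteness of the isotropy groups is among the stated hypotheses; the paper itself merely asserts that $0$ ``is already contained in the right hand sides'' without proof, so you are right to flag that something must be said here --- but the duality direction has to be corrected, and the needed hypothesis is non-discreteness. (If every $\Sigma_n$ were discrete one could in principle arrange all $\widehat{F|_{\Sigma_n}}$ to be bounded away from $0$, in which case the explicit $\{0\}$ coming from Corollary \ref{carefully} could not be dropped, so some such assumption is genuinely doing work.)
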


\begin{proof}
One has to apply Corollary \ref{carefully}, Theorem \ref{carefree} (with obvious adaptations) and Remark \ref{mantorc}. The number ${\sf
  s}$ is zero in this situation, and it is already contained in the right hand sides. Since ${\sf C}^*(\Si_n)\cong
C_0\big(\widehat\Si_n\big)$ is Abelian, the operators $H_n$ are normal, so ${\sf nr}(H_n)={\sf co}[{\sf sp}(H_n)]={\sf co}(\overline{{\sf R}_n})$\,, leading to
\begin{equation}\label{vavilony}
  {\sf nr}_{\rm ess}(H_0)=\overline{\sf co}\Big(\bigcup_{n\in X_\infty}\!\!{\sf co}\big(\overline{{\sf R}_n}\big)\Big)\,.
\end{equation}
Then it is easy to see that the right hand side of \eqref{vavilonie} and \eqref{vavilony} coincide.

\smallskip
Actually, there is a direct way to deduce \eqref{vavilonie} from \eqref{vaviloni}. The quotient of $\Pi_0\big[{\sf C}^*(\Xi)\big]$
through the ideal of compact operators is Abelian, isomorphic to ${\sf C}^*\big(\Xi_{X_\infty}\big)$\,, so $H_0$ is essentially normal and
its essential numerical range is then the convex hull of its essential spectrum.
\end{proof}

One may write down the localization result of Section \ref{sec.five} for standard twisted groupoids with totally intransitive groupoids at
the boundary, but without further assumptions it is impossible to make explicit the neighborhoods $W$ of the singleton quasi-orbits
$\{n\}$\,. To get a transparent, still general situation, we are going to construct $X$ as a compactification of $M$ under the following

\begin{Hypothesis}\label{ipotenuza}
\normalfont{Let $M=M^{\rm in}\sqcup M^{\rm out}$ be a second countable Hausdorff locally compact space with topology $\mathcal T(M)$\,,
  decomposed as the disjoint union between a compact component $M^{\rm in}$ and a non-compact one $M^{\rm out}$. Let ${\sf p}:M^{\rm
    out}\!\to X_\infty$ be a continuous surjection to a second countable Hausdorff compact space $X_\infty$\,, with topology
  $\mathcal T(X_\infty)$\,. We are also going to suppose that no fibre $M^{\rm out}_n\!:={\sf p}^{-1}(\{n\})$ is compact. }
\end{Hypothesis}

Let us also denote by $\mathcal K(M)$ the family of all the compact subsets of $M$. We need a convention about complements: if $S\subset M^{\rm out}$, we are going to write
\begin{equation*}
\tilde S\!:=M^{\rm out}\!\setminus\!S\,,\quad S^c\!:=M\!\setminus\!S=M^{\rm in}\sqcup\tilde S\,.
\end{equation*}
For $E\in\mathcal T(X_\infty)$ and $K\in\mathcal K(M)$ one sets
\begin{equation*}
  A^M_{E,K} := {\sf p}^{-1}(E)\cap K^c\subset M^{\rm out}\subset M,
\end{equation*}
\begin{equation*}
  A_{E,K} := A^M_{E,K}\sqcup E\subset X\!:=M\sqcup X_\infty\,.
\end{equation*}
One has 
\begin{equation*}
  A^M_{E,K} = A_{E,K}\cap M,\quad E=A_{E,K}\cap X_\infty\,,
\end{equation*} 
as well as
\begin{equation*}
  A_{E_1,K_1}\cap A_{E_2,K_2}=A_{E_1\cap E_2,K_1\cup K_2}\,.
\end{equation*}
Let us set 
\begin{equation*}
  \mathcal A(X) := \big\{A_{E,K}\,\big\vert\,E\in\mathcal T(X_\infty)\,,\,K\in\mathcal K(M)\big\}\,,
\end{equation*} 
\begin{equation*}
\mathcal B(X)\!:=\mathcal T(M)\cup\mathcal A(X)\,.
\end{equation*} 
It is easy to check that $\mathcal B(X)$ is the base of a topology on $X$ (different from the disjoint union topology on $M\sqcup
X_\infty$)\,, that we denote by $\mathcal T(X)$\,. It also follows easily that $M$ embeds homeomorphically as an open subset of $X$ and
the topology $\mathcal T(X)$\,, restricted to $X_\infty$\,, coincides with $\mathcal T(X_\infty)$\,.

\begin{Lemma}\label{ctificaishn}
The topological space $(X,\mathcal T(X))$ is a compactification of $(M,\mathcal T(M))$\,.
\end{Lemma}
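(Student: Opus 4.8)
that $(X, \mathcal{T}(X))$ is compact Hausdorff, and that $M$ is dense in it. Density is immediate from the structure of the basic open sets: any nonempty basic open set of the form $A_{E,K}$ contains the nonempty set $A^M_{E,K} = {\sf p}^{-1}(E) \cap K^c \subset M$ (nonempty because no fibre $M^{\rm out}_n$ is compact, so ${\sf p}^{-1}(E)$ escapes every compact $K$), and every basic open set in $\mathcal{T}(M)$ already lies in $M$. Hence every nonempty open set of $X$ meets $M$, so $\overline{M} = X$. Since we already observed that $M$ embeds homeomorphically as an open subset and that $\mathcal{T}(X)$ restricts to $\mathcal{T}(X_\infty)$ on $X_\infty$, the only substantive work is compactness and the Hausdorff property.

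\textbf{For the Hausdorff property}, I would argue by cases on the location of two distinct points $p \neq q$. If both lie in $M$ or both lie in $X_\infty$, separation follows from the Hausdorffness of $(M, \mathcal{T}(M))$ or of $(X_\infty, \mathcal{T}(X_\infty))$, respectively, together with the fact that each embeds with its own topology. The genuinely new case is $p \in M$, $q = n \in X_\infty$. Here I would choose a relatively compact neighborhood $K$ of $p$ in $M$ (using local compactness of $M$) and separate $p$ by a small open $M$-neighborhood $U \subset K$ from the basic set $A_{X_\infty, \overline{K}}$, which is an open neighborhood of $n$ whose trace on $M$ is ${\sf p}^{-1}(X_\infty) \cap \overline{K}^c = M^{\rm out} \setminus \overline{K}$, disjoint from $U$. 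This uses crucially that $M^{\rm in}$ is compact, so points of $M^{\rm in}$ are automatically excluded once $K$ is large enough, and that $\overline{K}$ is a legitimate element of $\mathcal{K}(M)$.

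\textbf{For compactness}, I would take an open cover of $X$ by basic sets and exploit that $X_\infty$ is compact. Each $n \in X_\infty$ lies in some basic set from the cover; since points of $X_\infty$ are only contained in sets of the form $A_{E,K}$ (those in $\mathcal{T}(M)$ lie entirely in $M$), I extract for each $n$ a set $A_{E_n, K_n} \ni n$. By compactness of $X_\infty$, finitely many of the $E_n$ cover $X_\infty$, giving finitely many sets $A_{E_{n_1}, K_{n_1}}, \dots, A_{E_{n_r}, K_{n_r}}$ whose union covers all of $X_\infty$ together with the complement in $M^{\rm out}$ of the compact set $K := K_{n_1} \cup \cdots \cup K_{n_r}$. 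What remains uncovered is contained in $K$, a compact subset of $M$; the original cover restricted to $K$ (using that $M$ is open in $X$ and carries its own topology) admits a finite subcover by compactness of $K$ in $M$. Combining the two finite families yields a finite subcover of $X$.

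\textbf{The main obstacle} I anticipate is bookkeeping in the compactness argument: one must check carefully that the union of the chosen finitely many $A_{E_{n_j}, K_{n_j}}$ really leaves only a subset of a single compact $K \subset M$ uncovered, which hinges on the identity $A_{E_1,K_1} \cap A_{E_2,K_2} = A_{E_1 \cap E_2, K_1 \cup K_2}$ and the complementary computation that $\bigcup_j A^M_{E_{n_j}, K_{n_j}} \supset M^{\rm out} \setminus K$ once the $E_{n_j}$ cover $X_\infty$. The Hausdorff mixed case is conceptually the subtle point but is routine once one commits to enlarging $K$ to absorb the compact component $M^{\rm in}$.
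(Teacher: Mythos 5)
Your argument follows the paper's proof essentially step for step: compactness is obtained by first covering $X_\infty$ with finitely many $E_{n_j}$ and then observing that the part of $M$ left uncovered by the corresponding $A_{E_{n_j},K_{n_j}}$ is a compact subset of $M$, and density follows from the non-compactness of the fibres $M^{\rm out}_n$; the Hausdorff separation you add is a routine supplement to the remarks the paper makes just before the lemma. The one slip is your claim that the uncovered remainder is contained in $K:=K_{n_1}\cup\dots\cup K_{n_r}$ alone — it is in fact contained in $M^{\rm in}\cup K$, the extra term arising exactly as in the paper's computation $\bigcap_j {\sf p}^{-1}(E_{n_j})^c=M^{\rm in}$ — but since $M^{\rm in}$ is compact by hypothesis this set is still compact and the argument goes through unchanged.
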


\begin{proof}
To show that $X$ is compact, it is enough to extract a finite subcover from any of its open cover of the form
\begin{equation*}
  \big \{O_\gamma \mid \gamma \in \Gamma \big \} \cup \big\{A_{E_\delta,K_\delta} \mid \delta \in \Delta\big\}\,,
\end{equation*}
where $O_\gamma\in \mathcal T(M)$\,, $E_\delta\in\mathcal T(X_\infty)$ and $K_\delta\in\mathcal K(M)$\,. Since $X_\infty$ is compact, one has
$X_\infty=\bigcup_{\delta\in\Delta_0}\!E_\delta$ for some finite subset $\Delta_0$ of $\Delta$\,. We show now that the complement of
the (open) set $\bigcup_{\delta\in\Delta_0}\!A^M_{E_\delta,K_\delta}$ in $M$ is compact. One has
\begin{equation*}
\begin{aligned}
& \Big[\bigcup_{\delta\in\Delta_0}\!A^M_{E_\delta,K_\delta}\Big]^c =\bigcap_{\delta\in\Delta_0}\!\big[{\sf p}^{-1}(E_\delta)\cap
    K_\delta^{c}\big]^c=\bigcap_{\delta\in\Delta_0}\!\big[{\sf p}^{-1}(E_\delta)^c\cup K_\delta\big]\,.
\end{aligned}
\end{equation*} 
A set of the form $\,\bigcap_{j=1}^m(R_j\cup S_j)$ can be written as $\,\bigcup\big(T_1\cap T_2\cap\dots\cap T_m\big)$\,, over all possible
$T_k\in\{R_k,S_k\}$\,, $\forall\,k=1,\dots,m$\,. In our case, each time at least one of the sets in an intersection is some $K_\delta$\,,
the intersection is already compact. There is also a contribution containing no set $K_\delta$\,:
\begin{equation*}
\begin{aligned}
\bigcap_{\delta\in\Delta_0}\!{\sf p}^{-1}(E_\delta)^c&= \Big[\bigcup_{\delta\in\Delta_0}\!{\sf p}^{-1}(E_\delta)\Big]^c=\Big[{\sf
 p}^{-1}\Big(\bigcup_{\delta\in\Delta_0}\!E_\delta\Big)\Big]^c\\ &=\big[{\sf p}^{-1}(X_\infty)\big]^c=(M^{\rm out})^c=M^{\rm in}.
\end{aligned}
\end{equation*}
So the complement of $\,\bigcup_{\delta\in\Delta_0}\!A^M_{E_\delta,K_\delta}$ in $M$ can be covered by a finite number of sets $O_\gamma$ and we are done.

\smallskip
The fact that $M$ is dense in $X$ is obvious, since any neighborhood of a point belonging to $X_\infty$ contains a set $A_{E,K}$\,, with
$E\ne\emptyset$\,, that meets $M$, because our fibers $M_n^{\rm out}$ are not compact.
\end{proof}

We could call $(X,\mathcal T(X))$ {\it the ${\sf p}$-compactification of} $(M,\mathcal T(M))$\,. It is meant to generalize the radial
compactification of a vector space $M$, for which $M^{\rm int}=\{0\}$ and $X_\infty$ is a sphere.

\begin{Remark}\label{descripshn}
\normalfont{A function $\phi:X\to\mathbb C$ is continuous with respect to the topology $\mathcal T(X)$ if and only if
\begin{itemize}
\item 
the restrictions $\phi|_M$ and $\phi|_{X_\infty}$ are continuous,
\item 
for every $n\in X_\infty$ and for every $\epsilon>0$\,, there exists $E\in\mathcal T(X_\infty)$ with $n\in E$ and $K\in\mathcal K(M)$ such
that $|\phi(m)-\phi(n)|\le\epsilon$ if ${\sf p}(m)\in E$ and $m\notin K$.
\end{itemize}
Let us set
\begin{equation*}\label{intind}
\begin{aligned}
  & C_{\sf p}(M)\!:=\\
  & \big \{\varphi\in C(M)\, \big\vert\, \exists\, M^{\rm in} \subset K \in \mathcal K(M)\ \,{\rm
    s.\,t.}\,\ \varphi(m)=\varphi(m')\ \,{\rm if}\,\,m,m'\notin K,\ {\sf p}(m) = {\sf p}(m')\big\}\,.
\end{aligned}
\end{equation*}
It is a unital $^*$-algebra consisting of bounded continuous functions that are asymptotically constant along all the fibers $M_n^{\rm
  out}$. It is not closed. Denoting by $C_{\mathcal T(X)}(M)$ the $C^*$-algebra formed of restrictions to the dense subset $M$ of all the elements of $C(X)$\,, one has
\begin{equation*}\label{nimic}
C(X)\cong C_{\mathcal T(X)}(M)\supset C_0(M)+C_{\sf p}(M)\,.
\end{equation*}}
\end{Remark}

Assume now, in the same context, that $\Xi$ is a standard groupoid over
\begin{equation*}
X=M\sqcup X_\infty=M^{\rm in}\sqcup M^{\rm out}\sqcup X_\infty
\end{equation*} 
such that the restriction $\Xi_{X_\infty} =: \Si=\bigsqcup_{n\in X_\infty}\!\Si_{n}$ is a totally intransitive groupoid, as
above. Now the setting is rich enough to allow a transparent form of Theorem \ref{main}.

\begin{Proposition}\label{amia}
Let $F\in L^{\infty,1}_{\rm cont}(\Xi)\subset{\sf C}^*(\Xi,\o)$ be a normal element and $n\in X_\infty$\,. Let $\kappa\in C_0(\mathbb R)$
be a real function with support that does not intersect the spectrum of the twisted convolution operator $H_n$ given in \eqref{hashen}.

\begin{enumerate}
\item[(i)] For every $\epsilon>0$ there is a neighborhood $E$ of $n$ in $X_\infty$ and a compact subset $K$ of $M$ such that
\begin{equation*}\label{trasnaie}
\big\Vert\,{\bf 1}_{\{m\notin K\mid {\sf p}(m)\in E\}}\,\kappa(H_0)\,\big\Vert_{\mathbb B(\H_0)}\le\epsilon\,.
\end{equation*}
\item[(ii)]
Suppose that $F$ is self-adjoint. One also has
\begin{equation*}\label{trasnea}
\big\Vert\,{\bf 1}_{\{m\notin K\mid {\sf p}(m)\in E\}}\,e^{itH_0}\kappa(H_0)u\,\big\Vert_{\H_0}\le\epsilon\|u\|_{\H_0}
\end{equation*}
uniformly in $t\in\R$ and $u\in\H_0$\,.
\end{enumerate}
\end{Proposition}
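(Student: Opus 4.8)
The plan is to read Proposition \ref{amia} as the concrete incarnation of Theorem \ref{main} for the singleton quasi-orbit $\mathcal Q=\{n\}$. In the present totally intransitive situation the orbit of each $n\in X_\infty$ is the singleton $\{n\}$, so $\mathcal Q_n=\{n\}$ is indeed a quasi-orbit contained in $X_\infty$ and Theorem \ref{main} applies verbatim; note that $F\in L^{\infty,1}_{\rm cont}(\Xi)\subset{\sf C}^*(\Xi,\o)\subset{\sf C}^*(\Xi,\o)^{\bf M}$, so $F$ is in particular a normal multiplier as required. Two points then have to be matched: that the spectral hypothesis imposed on $\kappa$ coincides with the one of Theorem \ref{main}, and that the abstract neighborhood $W$ delivered by that theorem can be taken of the explicit form dictated by the $\sf p$-compactification topology $\mathcal T(X)$.

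First I would identify the relevant spectrum. The reduced groupoid $\Xi_{\{n\}}$ is the isotropy group $\Sigma_n$, so ${\sf C}^*(\Xi_{\{n\}},\o_{\{n\}})={\sf C}^*(\Sigma_n,\o_n)$ is the twisted group $C^*$-algebra, and by Remark \ref{coerenta} one has $H_n=\Pi_n(F)=\Pi_{\{n\},n}(F_{\{n\}})$, where $\Pi_{\{n\},n}$ is the regular representation of ${\sf C}^*(\Sigma_n,\o_n)$. Since $\Sigma_n$ is amenable, this regular representation is faithful by Remark \ref{convention}, hence isometric and spectrum-preserving, and its canonical extension to the unitization stays faithful because ${\sf C}^*(\Sigma_n,\o_n)$ is an essential ideal. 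Therefore ${\sf sp}(H_n)={\sf sp}(F_{\{n\}})$, so the assumption $\supp(\kappa)\cap{\sf sp}(H_n)=\emptyset$ is precisely the hypothesis $\supp(\kappa)\cap{\sf sp}(F_{\mathcal Q})=\emptyset$ needed in Theorem \ref{main} with $\mathcal Q=\{n\}$.

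It then remains to translate the neighborhood. Theorem \ref{main}(i) produces, for a given $\epsilon>0$, an open neighborhood $W$ of $\{n\}$ in $(X,\mathcal T(X))$ with $\|{\bf 1}_{W_0}\kappa(H_0)\|_{\mathbb B(\H_0)}\le\epsilon$, where $W_0=W\cap M$. Because $\mathcal B(X)$ is a base of $\mathcal T(X)$, the set $W$ contains a basic open set $A_{E,K}$ with $n\in E\in\mathcal T(X_\infty)$ and $K\in\mathcal K(M)$; hence $A^M_{E,K}=A_{E,K}\cap M\subset W_0$, so ${\bf 1}_{A^M_{E,K}}={\bf 1}_{A^M_{E,K}}{\bf 1}_{W_0}$ and
\begin{equation*}
\|{\bf 1}_{A^M_{E,K}}\kappa(H_0)\|_{\mathbb B(\H_0)}\le\|{\bf 1}_{W_0}\kappa(H_0)\|_{\mathbb B(\H_0)}\le\epsilon .
\end{equation*}
Recalling $A^M_{E,K}={\sf p}^{-1}(E)\cap K^c=\{m\notin K\mid{\sf p}(m)\in E\}$ yields exactly the estimate in (i). Part (ii) then follows with no further work, since $e^{itH_0}$ is unitary and commutes with $\kappa(H_0)$, so ${\bf 1}_{A^M_{E,K}}e^{itH_0}\kappa(H_0)$ has operator norm bounded by $\epsilon$ uniformly in $t$, giving the stated bound on $u$.

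The only genuine content beyond quoting Theorem \ref{main} is the spectrum identification together with the base translation, and I expect no serious obstacle. The point requiring the most care is the faithfulness (equivalently, the spectrum-preservation) of the regular representation of the twisted amenable group $\Sigma_n$: the singleton reduced groupoid $\Xi_{\{n\}}$ is \emph{not} standard, so Corollary \ref{construint} does not apply directly, and one must instead invoke amenability through Remark \ref{convention}.
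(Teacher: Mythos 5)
Your proof is correct and follows exactly the route the paper intends (the proposition is stated as "a transparent form of Theorem \ref{main}" and left without a written proof): apply Theorem \ref{main} to the singleton quasi-orbit $\mathcal Q=\{n\}$, identify ${\sf sp}(H_n)={\sf sp}(F_{\{n\}})$ via faithfulness of the regular representation of the amenable twisted group $(\Sigma_n,\o_n)$, and shrink the abstract neighborhood $W$ to a basic open set $A_{E,K}$ of the $\sf p$-compactification. Your observation that Corollary \ref{construint} is not directly applicable (since $\Xi_{\{n\}}$ is not standard) and that amenability must be invoked instead is a correct and worthwhile point of care.
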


For the given point $n$\,, if Remark \ref{mantorc} applies, the condition on $\kappa$ reads $\supp \kappa\cap\overline{\sf R}_n=\emptyset$\,.

\end{document}